%Write by LiMeng in 2021/9/15.

\documentclass[11pt]{article}
\usepackage{latexsym}
\usepackage{amsmath}
\usepackage{amsthm,color}
\usepackage{amssymb}
\usepackage{mathrsfs}
\usepackage{picinpar,graphicx}
\usepackage[colorlinks,  linkcolor=blue,  anchorcolor=blue, citecolor=blue]{hyperref}

\usepackage{lscape}
\topmargin=0cm \oddsidemargin=0cm \textwidth=15cm \textheight=22cm
\newtheorem{theorem}{\indent Theorem}[section]
\newtheorem{proposition}[theorem]{\indent Proposition}

\newtheorem{lemma}[theorem]{\indent Lemma}
\newtheorem{remark}{\indent Remark}[section]

%%%%%%%%%%%%%%%%%%%%%%%%%%%%%%%%%%%%%%%%%%%%%%%%%%%%%%%%%%%%%%%%%%%%%%%%%%%%%%%%%%%%%%%%%%%%

\begin{document}
\renewcommand{\baselinestretch}{1.3}

%%%%%%%%%%%%%%%%%%%%%%%%

\begin{center}
    {\large \bf Existence  of normalized solutions for  fractional  coupled Hartree-Fock type system}
\vspace{0.5cm}\\{\sc Meng  Li$^*$}  %{\sc Jinchun He$^1$} {\sc Haoyuan Xu$^1$} {\sc Meihua  Yang$^1$$^*$}\\
%{\small 1. School of Mathematics and Statistics, Huazhong University of Science and Technology} \\
%{\small Wuhan, 430074, China}
\end{center}

%%%%%%%%%%%%%%%%%%%%%%%%

\renewcommand{\theequation}{\arabic{section}.\arabic{equation}}
\numberwithin{equation}{section}

%%%%%%%%%%%%%%%%%%%%%%%%%%%%%%%%%%%%%%%%%%%%%%%%%%%%%%%%%%%%%%%%%%%%

%%%%%%%%%%%%%%%%%%%%%%%%%%%%%%%%%%%%%%%%%%%%%%%%%%%%%%%%%%%%%%%%%%%%%%
\begin{abstract}
In this paper, we consider the existence of solutions for the following fractional  coupled Hartree-Fock type system
\begin{align*}
\left\{\begin{aligned}
&(-\Delta)^s u+V_1(x)u+\lambda_1u=\mu_1(I_{\alpha}\star |u|^p)|u|^{p-2}u+\beta(I_{\alpha}\star |v|^r)|u|^{r-2}u\\
&(-\Delta)^s v+V_2(x)v+\lambda_2v=\mu_2(I_{\alpha}\star |v|^q)|v|^{q-2}v+\beta(I_{\alpha}\star |u|^r)|v|^{r-2}v
\end{aligned}
\right.~\quad x\in\mathbb{R}^N,
\end{align*}
under the constraint
\begin{align*}
\int_{\mathbb{R}^N}|u|^2=a^2,~\int_{\mathbb{R}^N}|v|^2=b^2.
\end{align*}
where $s\in(0,1),~N\ge3,~\mu_1>0,~\mu_2>0,~\beta>0,~\alpha\in(0,N),~1+\frac{\alpha}{N}<p,~q,~r<\frac{N+\alpha}{N-2s}$ and $I_{\alpha}(x)=|x|^{\alpha-N}$. Under some restrictions of $N,\alpha,p,q$ and $r$, we give the positivity of normalized solutions for $p,q,r\le 1+\frac{\alpha+2s}{N}$.\\
\textbf{Keywords:} Hartree-Fock type system system, Normalized solutions, Positive solutions.
\end{abstract}

\vspace{-1 cm}

%%%%%%%%%%%%%%%%%%%%%%%%%%%%%%%%%%%%%%%%%%%%%%%%%%%%%%%%%%%%%%%%%%%%
\footnote[0]{ \hspace*{-7.4mm}
$^{*}$ Corresponding author.\\
AMS Subject Classification: 58F11,35Q30. \\
E-mails: mengl@hust.edu.cn}

\section{Introduction}
In this paper, we consider the existence of solutions for the following fractional  coupled Hartree-Fock type system
\begin{align}\label{system}
\left\{\begin{aligned}
&(-\Delta)^s u+V_1(x)u+\lambda_1u=\mu_1(I_{\alpha}\star |u|^p)|u|^{p-2}u+\beta(I_{\alpha}\star |v|^r)|u|^{r-2}u\\
&(-\Delta)^s v+V_2(x)v+\lambda_2v=\mu_2(I_{\alpha}\star |v|^q)|v|^{q-2}v+\beta(I_{\alpha}\star |u|^r)|v|^{r-2}v
\end{aligned}
\right.~\quad x\in\mathbb{R}^N,
\end{align}
under the constraint
\begin{align}\label{condition}
\int_{\mathbb{R}^N}|u|^2=a^2,~\int_{\mathbb{R}^N}|v|^2=b^2.
\end{align}
where $s\in(0,1),~N\ge3,~\mu_1>0,~\mu_2>0,~\beta>0,~\alpha\in(0,N),~1+\frac{\alpha}{N}<p,~q,~r<\frac{N+\alpha}{N-2s}$ and $I_{\alpha}(x)=|x|^{\alpha-N}$.

When studying  normalized solutions to the fractional equation
\begin{align*}
(-\Delta)^s u+\lambda u=(I_{\alpha}\star|u|^p)|u|^{p-2}u,~x\in\mathbb{R}^N,
\end{align*}
the $L^2$-critical exponent $p=1+\frac{\alpha+2s}{N}$, the Hardy-Littlewood-Sobolev upper critical exponent $\bar{p}=\frac{\alpha+N}{N-2s}$ and lower critical exponent $\b{p}=1+\frac{\alpha}{N}$ play an important role.

For $s=1$, this nonlocal type problem was considered in the basic
quantum chemistry model of small number of electrons interacting with static nucleii which can be approximated by Hartree or Hartree-Fock minimization problems(see \cite{LL05,LS77,L87}). When $N=3,\alpha=2$ and $p=q=r=2$, the system \eqref{system}($s=1$) arises in multi-component Bose-Einstein condensates \cite{AEMWC95} and nonlinear optics \cite{M87}. Actually, considering the generalized time-dependent Schr\"odinger system
\begin{align}\label{1system}
\left\{\begin{array}{l}
-i \frac{\partial \Phi_1}{\partial t}+V_1(x) \Phi_1=\frac{\hbar^2}{2 m} \Delta \Phi_1+\mu_1(I_\alpha\star|\Phi_1|^2) \Phi_1+\beta(I_\alpha \star|\Phi_2|^2) \Phi_1, \\
-i \frac{\partial \Phi_2}{\partial t}+V_2(x) \Phi_2=\frac{\hbar^2}{2 m} \Delta \Phi_2+\mu_2(I_\alpha \star|\Phi_2|^2) \Phi_2+\beta(I_\alpha \star|\Phi_1|^2) \Phi_2, \\
\Phi_j=\Phi_j(x, t) \in \mathbb{C}, \Phi_j(x, t) \to 0, \quad \text {~as~}|x| \to \infty, t>0, j=1,2,
\end{array}\right.
\end{align}
where $i$ is the imaginary unit, $m$ is the mass of the particles, $\hbar$ is the Plank constant, $\mu_1,\mu_2>0$, and $\beta \neq 0$ is a coupling constant which describes the scattering length of the attractive or repulsive interaction, $V_1(x)$ and $V_2(x)$ are the external potentials. Physically, the solution $\Phi_i$ denotes the $i$-th component of the beam in Kerr-like photorefractive media. The positive constants $\mu_1, \mu_2$ indicate the self-focusing strength in the component of the beam, and the coupling constant $\beta$ measures the interaction between the two components of the beam. The sign of $\beta$ determines whether the interactions of states are repulsive or attractive. Any solution of system \eqref{1system} subjects to conservation of mass, that is the following two norms:
$$
\int_{\mathbb{R}^N}|\Phi_1(t, x)|^2dx \text{~and~} \int_{\mathbb{R}^N}|\Phi_2(t, x)|^2dx
$$
are independent of $t\in\mathbb{R}$. Moreover, the $L^2$-norms $|\Phi_1(t, \cdot)|_2$ and $|\Phi_2(t, \cdot)|_2$ have important physical significance, for example, in Bose-Einstein condensates, $|\Phi_1(t, \cdot)|_2$ and $|\Phi_2(t, \cdot)|_2$ represent the number of particles of each component; in nonlinear optics framework, $|\Phi_1(t, \cdot)|_2$ and $|\Phi_2(t, \cdot)|_2$ represent the power supply. Therefore, it is natural to consider the masses as preserved, and the solution of \eqref{1system} with prescribed mass is called normalized solution.
To study the solitary wave solution of \eqref{1system}, we set $\Phi_1(x, t)=e^{i \lambda_1 t} u(x)$ and $\Phi_2(x, t)=e^{i \lambda_2 t} v(x)$. Then, system \eqref{1system} is reduced to a general elliptic system \eqref{system} with $s=1$.

Condition \eqref{condition}  is called as the normalization condition, which imposes a normalization on the $L^2$-masses of $u$ and $v$. The solutions to the Schr\"odiger system \eqref{system} under the constraint \eqref{condition} are normalized solutions. In order to obtain the solution to the fractional Schr\"odiger system \eqref{system}  satisfying the normalization condition \eqref{condition}, one need to consider
the critical point of the functional $E_{\mu_1,\mu_2,\beta}(u,v)$ on  $S_{a}\times S_{b}$(see \eqref{efct} and \eqref{l2sc}). And then, $\lambda_1$ and $\lambda_2$ appear as Lagrange multipliers with respect to the mass constraint, which cannot be determined a priori, but are part of the unknown. Some literature called this problem as fixed mass problem.

Recently, the normalized solutions of nonlinear
Schr\"{o}dinger equations and systems has attracted many researchers, see more details \cite{BC13,BJ18,BJN16,B20,BZZ20,GJ18,J2008,J20,LZ21,S20,SN20,WW21}. In particular, for $s=1$, Wang in \cite{W21} considered the system \eqref{system} with $1+\frac{\alpha+2s}{N}<p,q<\frac{N+\alpha}{N-2s}$, by min-max principle and Liouville type theorem, he gave the existence of normalized solutions. Wang and Yang in \cite{WY18} considered  \eqref{system} with $L^2-$ critical exponent, they gave the  existence and asymptotic behaviours of normalized solutions.

Up to our knowledge, there is no paper about the normalized solutions of \eqref{system} for $s\in(0,1)$. Therefore, we consider the existence of normalized solutions for \eqref{system} under the different assumptions of $p,q,r$ and give the positivity of normalized solutions under the trapping potentials.

The corresponding energy functional with \eqref{system} is
\begin{align}\label{efct}
\begin{split}
&E_{\mu_1,\mu_2,\beta}(u,v)\\
=&\frac12\int_{\mathbb{R}^N}(|(-\Delta)^{\frac{s}{2}}u|^2+|(-\Delta)^{\frac{s}{2}}v|^2)dx+\frac12\int_{\mathbb{R}^N}(V_1(x)|u|^2+V_2(x)|v|^2)dx\\
&-\frac{\mu_1}{2p}\int_{\mathbb{R}^N}(I_{\alpha}\star|u|^p)|u|^pdx-\frac{\mu_2}{2q}\int_{\mathbb{R}^N}(I_{\alpha}\star|v|^q)|v|^qdx-\frac{\beta}{r}\int_{\mathbb{R}^N}(I_{\alpha}\star|u|^r)|v|^rdx.
\end{split}
\end{align}
It is standard to check that $E_{\mu_1,\mu_2,\beta}\in C^1$ under some assumptions on $p,q$ and $r$. The critical point of $E_{\mu_1,\mu_2,\beta}$ constrained to $S_a\times S_b$ gives rise to a solution for \eqref{system}, where
\begin{align}\label{l2sc}
S_a=\{u\in H^s(\mathbb{R}^N),\|u\|_2^2=a^2\}.
\end{align}
In this paper, we mainly consider different potentials as follows:
\begin{align*}
\textbf{(V1)}:V_1(x)=V_2(x)=0,
\end{align*}
and
\begin{align*}
\textbf{(V2)}:V_{i}(x)\ge0,~V_{i}\in L_{loc}^{\infty}(\mathbb{R}^N),~ \lim_{|x|\to\infty} V_{i}(x)=\infty, \forall~i=1,2.
\end{align*}
\begin{theorem}\label{t1}
Suppose that $3\le N\le 4s,\alpha\in(0,N),p,q,r\in(1+\frac{\alpha}{N},1+\frac{\alpha+2s}{N})$ and $V_{i}(x)(i=1,2)$ satisfies \textbf{(V1)}. Let $\mu_1,\mu_2,\beta,a$ and $b$ be fixed. Then the system \eqref{system} has a solution $(u,v,\lambda_1,\lambda_2)$ with  $(u,v)\in S_a\times S_{b}$ , where $\lambda_1,\lambda_2>0$ and $u,v$ are positive and radial.
\end{theorem}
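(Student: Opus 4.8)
\textit{Strategy.} Since $p,q,r<1+\frac{\alpha+2s}{N}$ the problem is $L^2$-subcritical, so the plan is to obtain $(u,v)$ as a \emph{global minimizer} of $E_{\mu_1,\mu_2,\beta}$ on $S_a\times S_b$. First I would record that $E_{\mu_1,\mu_2,\beta}\in C^1$ (Hardy--Littlewood--Sobolev together with $H^s\hookrightarrow L^\theta$, $2\le\theta\le\frac{2N}{N-2s}$), and, setting $\gamma_p=\frac{N(p-1)-\alpha}{2s}$ and similarly $\gamma_q,\gamma_r$ --- all three in $(0,1)$ by subcriticality --- that the fractional Gagliardo--Nirenberg inequality bounds each of $\int(I_\alpha\star|u|^p)|u|^p$, $\int(I_\alpha\star|v|^q)|v|^q$, $\int(I_\alpha\star|u|^r)|v|^r$ on $S_a\times S_b$ by $\varepsilon\big(\|(-\Delta)^{s/2}u\|_2^2+\|(-\Delta)^{s/2}v\|_2^2\big)+C_\varepsilon$. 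Hence $E_{\mu_1,\mu_2,\beta}$ is coercive and bounded below on $S_a\times S_b$, so $m(a,b):=\inf_{S_a\times S_b}E_{\mu_1,\mu_2,\beta}$ is finite. Next I would check $m(a,b)<0$ by testing with the $L^2$-preserving dilations $u_t(x)=t^{N/2}u(tx)$, $v_t(x)=t^{N/2}v(tx)$: the kinetic part scales like $t^{2s}$ while the three negative Hartree terms scale like $t^{2s\gamma_p},t^{2s\gamma_q},t^{2s\gamma_r}$ with exponents $<2s$, so $E_{\mu_1,\mu_2,\beta}(u_t,v_t)<0$ for small $t>0$; differentiating this at $t=1$ will later supply a Pohozaev--Nehari identity.

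\textit{Existence of a minimizer.} Given a minimizing sequence I would first replace it by $(|u_n|,|v_n|)$ --- which does not increase the energy since $\|(-\Delta)^{s/2}|w|\|_2\le\|(-\Delta)^{s/2}w\|_2$ while the Hartree terms are unchanged --- and then by the Schwarz symmetrizations $(|u_n|^*,|v_n|^*)$, using the fractional P\'olya--Szeg\H{o} inequality for the kinetic part and the Riesz rearrangement inequality for the negative Hartree and coupling terms. Thus the sequence may be taken radial, nonnegative and nonincreasing; by coercivity it is bounded in $H^s_{rad}\times H^s_{rad}$, so pass to a weak limit $(u,v)$. Because $1+\frac{\alpha}{N}<p,q,r<\frac{N+\alpha}{N-2s}$, the exponents $\frac{2Np}{N+\alpha},\frac{2Nq}{N+\alpha},\frac{2Nr}{N+\alpha}$ lie strictly between $2$ and $\frac{2N}{N-2s}$, so the compact embedding $H^s_{rad}(\mathbb{R}^N)\hookrightarrow L^\theta(\mathbb{R}^N)$ forces the Hartree and coupling terms to pass to the limit; with weak lower semicontinuity of the kinetic term this gives $E_{\mu_1,\mu_2,\beta}(u,v)\le m(a,b)$ and, since $m(a,b)<0$, that $u\not\equiv0$, $v\not\equiv0$. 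The remaining point is that no $L^2$-mass escapes to infinity, i.e. $\|u\|_2=a$, $\|v\|_2=b$; for this I would prove the strict subadditivity of $m$, starting from the scaling bound $m(\theta a,\theta b)\le\theta^2 m(a,b)$ for $\theta\ge1$ (valid since all nonlinear powers exceed $2$), which with $m(a,b)<0$ gives strict monotonicity along rays, and then a Lions-type dichotomy argument on the radial minimizing sequence rules out splitting of mass. Once this is done $(u,v)\in S_a\times S_b$ is a radial nonnegative minimizer, hence a constrained critical point.

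\textit{From minimizer to solution.} The Lagrange multiplier rule produces $\lambda_1,\lambda_2\in\mathbb{R}$ with $(u,v,\lambda_1,\lambda_2)$ solving \eqref{system}. A standard bootstrap (HLS estimates for the convolution factors, fractional elliptic regularity, and the subcritical growth $p,q,r<\frac{N+\alpha}{N-2s}$) yields $u,v\in C(\mathbb{R}^N)$ with $u(x),v(x)\to0$ as $|x|\to\infty$. Positivity is then pointwise: if $u(x_0)=0$ then, since $u\ge0$, $u\not\equiv0$, $(-\Delta)^s u(x_0)=c_{N,s}\,\mathrm{P.V.}\!\int\frac{-u(y)}{|x_0-y|^{N+2s}}\,dy<0$, whereas in the first equation of \eqref{system} the right-hand side vanishes at $x_0$ (each term carries a power of $|u(x_0)|$) and $-\lambda_1 u(x_0)=0$, a contradiction; hence $u>0$, and likewise $v>0$. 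For the signs of the multipliers, testing the two equations against $u$ and $v$, summing, and using the Pohozaev--Nehari identity from the dilation above gives $\lambda_1 a^2+\lambda_2 b^2$ as a linear combination of the three Hartree integrals with coefficients $1-\tfrac{\gamma_p}{p},1-\tfrac{\gamma_q}{q},1-\tfrac{\gamma_r}{r}$, all positive precisely because $p,q,r<\frac{N+\alpha}{N-2s}$; hence $\lambda_1 a^2+\lambda_2 b^2>0$. To separate $\lambda_1$ from $\lambda_2$ I would invoke a fractional Liouville theorem: if $\lambda_1\le0$, then (using $u>0$, which makes the right-hand side of the first equation strictly positive) $u$ is a positive $H^s(\mathbb{R}^N)$ function with $(-\Delta)^s u>0$ on $\mathbb{R}^N$, impossible when $N\le4s$ since a nonnegative $s$-superharmonic function in $L^2(\mathbb{R}^N)$ must vanish in that range of dimensions; hence $\lambda_1>0$, and the same for $\lambda_2$.

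\textit{Main obstacle.} The delicate step is the compactness of the minimizing sequence, i.e.\ excluding loss of $L^2$-mass at infinity, which for the coupled system needs the full strict-subadditivity analysis of $m(a,b)$ rather than the one-line scaling identity that suffices in the scalar case. I also expect the regularity/positivity bootstrap and, in particular, the fractional Liouville step for the individual positivity of $\lambda_1,\lambda_2$ to be where the dimensional restriction $3\le N\le4s$ is genuinely used.
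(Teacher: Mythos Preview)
Your overall plan is viable, but it diverges from the paper at the compactness step, and one claim in your sketch is not correct as stated.

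\medskip
\noindent\textbf{Where your argument slips.} From $E_{\mu_1,\mu_2,\beta}(u,v)\le m(a,b)<0$ you conclude that \emph{both} $u\not\equiv0$ and $v\not\equiv0$. This does not follow: $m(a,b)<0$ only rules out $(u,v)=(0,0)$. To exclude the semitrivial limits $u\equiv0$ or $v\equiv0$ you need the strict comparison with the scalar problems, namely
\[
e_1(a,b)\;\le\; m(a,\mu_1,p)+m(b,\mu_2,q)\;<\;\min\{m(a,\mu_1,p),\,m(b,\mu_2,q)\},
\]
which is exactly what the paper proves (Lemma~\ref{le1}); your ray bound $m(\theta a,\theta b)\le\theta^{2}m(a,b)$ is not enough for this, nor does it yield monotonicity of $m(a,b)$ in each variable separately, which is what a two--constraint Lions argument would actually require. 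So the ``main obstacle'' you flag is real, and the sketch you give for it is incomplete.

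\medskip
\noindent\textbf{How the paper avoids this.} The paper does not prove any subadditivity or two--parameter monotonicity at all. Instead it upgrades the (radial, nonnegative) minimizing sequence to a Palais--Smale sequence, so that approximate multipliers $\lambda_{1n},\lambda_{2n}$ exist and converge to some $\lambda_1,\lambda_2$; by radial compactness the Hartree terms pass to the limit and the weak limit $(u,v)$ solves the system with these $\lambda_1,\lambda_2$. The comparison above rules out $u\equiv0$ and $v\equiv0$. Then the Liouville lemma (this is where $N\le 4s$ enters, via $2\le \frac{N}{N-2s}$) applied to the weak limit forces $\lambda_1>0$ and $\lambda_2>0$. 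Finally, testing the difference of the approximate equations with $(u_n-u,0)$ and $(0,v_n-v)$ gives
\[
\|(-\Delta)^{s/2}(u_n-u)\|_2^{2}+\lambda_1\|u_n-u\|_2^{2}\to 0,
\]
and similarly for $v$, hence strong $H^{s}$ (in particular $L^{2}$) convergence. Thus in the paper the Liouville step does double duty: it delivers both the sign of the multipliers \emph{and} the missing $L^{2}$--compactness, with no subadditivity analysis needed.

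\medskip
\noindent\textbf{What each route buys.} Your approach, if completed, is self--contained at the variational level and does not require taking a PS sequence; but the two--parameter strict--subadditivity/monotonicity is genuinely more work than in the scalar case. The paper's route is shorter and explains cleanly why the dimensional restriction $3\le N\le 4s$ is already used for \emph{compactness}, not only, as you surmise, for the sign of $\lambda_i$ at the very end. Your arguments for pointwise positivity of $u,v$ (regularity bootstrap plus the integral definition of $(-\Delta)^s$) and for $\lambda_i>0$ via Liouville are in the same spirit as the paper; the paper is in fact terser on the positivity in the $(\mathbf{V1})$ case.
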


\begin{theorem}\label{t11}
Suppose that $N\ge3,\alpha\in(0,N),p,q,r\in(1+\frac{\alpha}{N},1+\frac{\alpha+2s}{N})$ and $V_{i}(x)(i=1,2)$ satisfies \textbf{(V2)}. Let $\mu_1,\mu_2,\beta,a$ and $b$ be fixed. Then the system \eqref{system} has a solution $(u,v,\lambda_1,\lambda_2)$ with  $(u,v)\in S_a\times S_{b}$, where $u,v$ are nonnegative.
\end{theorem}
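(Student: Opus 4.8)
The plan is to realize the solution as a global constrained minimizer of $E_{\mu_1,\mu_2,\beta}$ on $S_a\times S_b$, exploiting the compactness forced by the trapping potential, and then to read off $\lambda_1,\lambda_2$ as Lagrange multipliers. I work in the weighted space $\mathcal{X}=X_1\times X_2$, where $X_i=\{u\in H^s(\mathbb{R}^N):\int_{\mathbb{R}^N}V_i(x)|u|^2\,dx<\infty\}$ with norm $\|u\|_{X_i}^2=\|(-\Delta)^{s/2}u\|_2^2+\int_{\mathbb{R}^N}(1+V_i(x))|u|^2\,dx$. The first step is to show $E_{\mu_1,\mu_2,\beta}$ is bounded below and coercive on $(S_a\times S_b)\cap\mathcal{X}$. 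By the Hardy--Littlewood--Sobolev inequality followed by the fractional Gagliardo--Nirenberg inequality, for $t\in\{p,q\}$,
\[
\int_{\mathbb{R}^N}(I_\alpha\star|u|^t)|u|^t\,dx\le C\|u\|_{\frac{2Nt}{N+\alpha}}^{2t}\le C\|(-\Delta)^{s/2}u\|_2^{\theta_t}\|u\|_2^{2t-\theta_t},\qquad \theta_t:=\frac{N(t-1)-\alpha}{s},
\]
and the hypothesis $t<1+\frac{\alpha+2s}{N}$ is exactly $\theta_t<2$; the coupling term obeys $\int(I_\alpha\star|u|^r)|v|^r\le C\|u\|_{\frac{2Nr}{N+\alpha}}^r\|v\|_{\frac{2Nr}{N+\alpha}}^r$ and is controlled the same way, with combined kinetic exponent $\theta_r<2$. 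With $a,b$ fixed, Young's inequality absorbs all nonlinear terms into $\tfrac14(\|(-\Delta)^{s/2}u\|_2^2+\|(-\Delta)^{s/2}v\|_2^2)$ plus a constant depending only on $a,b,\mu_1,\mu_2,\beta$, so $E_{\mu_1,\mu_2,\beta}(u,v)\ge\tfrac14\|(u,v)\|_{\mathcal{X}}^2-C(a,b)$. Hence $m(a,b):=\inf_{(S_a\times S_b)\cap\mathcal{X}}E_{\mu_1,\mu_2,\beta}>-\infty$, and every minimizing sequence is bounded in $\mathcal{X}$.

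The second step uses the compactness that (V2) provides: the embedding $X_i\hookrightarrow L^\gamma(\mathbb{R}^N)$ is compact for $2\le\gamma<\frac{2N}{N-2s}$. For this one notes $\int_{|x|>R}|u|^2\le(\inf_{|x|>R}V_i)^{-1}\int_{\mathbb{R}^N}V_i|u|^2\to0$ uniformly on bounded subsets of $X_i$ as $R\to\infty$, combines it with the Rellich--Kondrachov-type compactness of $H^s$ into $L^2$ on bounded domains, and interpolates with the Sobolev embedding; the exponents $\frac{2Nt}{N+\alpha}$ for $t\in\{p,q,r\}$ lie in $(2,\frac{2N}{N-2s})$. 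Taking a minimizing sequence $(u_n,v_n)$, up to a subsequence $u_n\rightharpoonup u$, $v_n\rightharpoonup v$ in $\mathcal{X}$ and strongly in $L^2$ and in $L^{\frac{2Nt}{N+\alpha}}$ for $t\in\{p,q,r\}$. The $L^2$ convergence yields $\|u\|_2=a$, $\|v\|_2=b$, so $(u,v)\in S_a\times S_b$; the strong convergence in the $L^\gamma$-spaces above lets the three Hartree terms pass to the limit (the HLS bilinear form is continuous on $L^{\frac{2N}{N+\alpha}}\times L^{\frac{2N}{N+\alpha}}$); and the quadratic part $\tfrac12\|(-\Delta)^{s/2}u\|_2^2+\tfrac12\int V_1|u|^2$ (and likewise for $v$) is weakly lower semicontinuous. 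Therefore $E_{\mu_1,\mu_2,\beta}(u,v)\le\liminf_n E_{\mu_1,\mu_2,\beta}(u_n,v_n)=m(a,b)$, so $(u,v)$ is a minimizer.

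The third step extracts the equation: since $E_{\mu_1,\mu_2,\beta}$ is $C^1$ on $\mathcal{X}$ and $(u,v)$ minimizes it on the $C^1$ manifold $S_a\times S_b$, the Lagrange multiplier rule gives $\lambda_1,\lambda_2\in\mathbb{R}$ with $E'_{\mu_1,\mu_2,\beta}(u,v)+(\lambda_1u,\lambda_2v)=0$, i.e. $(u,v,\lambda_1,\lambda_2)$ solves \eqref{system}. For nonnegativity, the map $(u,v)\mapsto(|u|,|v|)$ preserves $S_a\times S_b$ and $\mathcal{X}$, leaves the potential and all Hartree terms unchanged, and does not increase the fractional kinetic energy, since $\|(-\Delta)^{s/2}|u|\|_2\le\|(-\Delta)^{s/2}u\|_2$ follows from $\big||u(x)|-|u(y)|\big|\le|u(x)-u(y)|$ in the singular-integral representation of the Gagliardo seminorm. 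Hence $(|u|,|v|)$ is again a minimizer, hence a solution, with $u,v\ge0$.

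The main obstacle I expect is making the compact embedding $X_i\hookrightarrow L^\gamma$ fully rigorous in the fractional setting: unlike $s=1$, restriction to a ball does not localize the Gagliardo seminorm cleanly, so one has to be careful in splitting the double integral and in combining the uniform tail bound with the local Rellich--Kondrachov compactness; verifying $E_{\mu_1,\mu_2,\beta}\in C^1(\mathcal{X})$ so that the multiplier rule applies is the other point requiring care. Once the functional framework and this compactness are in hand, the Gagliardo--Nirenberg bookkeeping, the weak lower semicontinuity, and the symmetrization are routine.
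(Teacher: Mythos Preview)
Your proposal is correct and follows essentially the same route as the paper: minimize $E_{\mu_1,\mu_2,\beta}$ on $(S_a\times S_b)\cap(H^s_1\times H^s_2)$, using the subcritical Gagliardo--Nirenberg/HLS bounds for coercivity and the compact embedding $H^s_i\hookrightarrow L^\gamma$ (which the paper quotes from \cite{DTW19} rather than sketching) to pass to the limit in the constraint and in the Hartree terms, with weak lower semicontinuity of the quadratic part and replacement by $(|u|,|v|)$ for nonnegativity. The only cosmetic differences are that the paper cites the compact embedding instead of outlining its proof and handles nonnegativity by taking a nonnegative minimizing sequence from the start, while you symmetrize at the end; the Lagrange multiplier step you spell out is left implicit in the paper.
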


\begin{theorem}\label{t12}
Let $3\le N<4s,\frac{N^2-8s^2}{4s-N}<\alpha<N$. Suppose that $p,q,r\in(\max\{2,1+\frac{\alpha}{N}\},1+\frac{\alpha+2s}{N})$ and $V_{i}(x)(i=1,2)$ satisfies \textbf{(V2)}. Let $\mu_1,\mu_2,\beta,a$ and $b$ be fixed. Furthermore, there exist $\theta\in(0,N+2s)$ and a constant $C>0$ such that
$$
\limsup_{|x|\to\infty}|x|^{-\theta}V(x)\le C.
$$
Then there exist $\eta\in(0,1)$ and a constant $\bar{C}>0$ such that $u_0,v_0\in C^{0,\eta}(\mathbb{R}^N)$ and
$$
u_0(x),v_0(x)\le \frac{\bar{C}}{|x|^{N+2s}}\quad\text{for}~|x|\ge1.
$$
In addition, $(u_0,v_0)$ is a positive  solution of the system \eqref{system} with  $(u_0,v_0)\in S_a\times S_{b}$ .
\end{theorem}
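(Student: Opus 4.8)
The plan is to build on Theorem \ref{t11}, which already provides a nonnegative normalized solution $(u_0,v_0)\in S_a\times S_b$ together with Lagrange multipliers $\lambda_1,\lambda_2$, under assumptions weaker than those of Theorem \ref{t12}. Thus the content of Theorem \ref{t12} is entirely a \emph{regularity and decay} statement plus a \emph{strict positivity} statement, and the extra hypotheses ($3\le N<4s$, the lower bound $\frac{N^2-8s^2}{4s-N}<\alpha<N$, the subcriticality $p,q,r>\max\{2,1+\frac{\alpha}{N}\}$, and the polynomial growth bound on $V$) are exactly what is needed to push a bootstrap through. First I would record the weak equation satisfied by $u_0$: $(-\Delta)^s u_0 = g_1(x)$ with $g_1 = -V_1 u_0-\lambda_1 u_0+\mu_1(I_\alpha\star|u_0|^p)|u_0|^{p-2}u_0+\beta(I_\alpha\star|v_0|^r)|u_0|^{r-2}u_0$, and symmetrically for $v_0$; since $(u_0,v_0)\in H^s\times H^s$ and the ranges of $p,q,r$ lie below the Hardy--Littlewood--Sobolev upper critical exponent $\frac{N+\alpha}{N-2s}$, the convolution terms $I_\alpha\star|u_0|^p$ etc.\ are well-defined in suitable $L^t$ spaces by the HLS inequality. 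The first step is then a Moser/Brezis--Kato style iteration adapted to the fractional Laplacian (using $(-\Delta)^{-s}$ acting on nonnegative data and testing with truncated powers of $u_0$) to upgrade $u_0,v_0$ from $H^s$ to $L^\infty_{loc}$ and in fact to $L^\infty(\mathbb{R}^N)$; the condition $p,q,r>2$ is what keeps the nonlinear powers $|u_0|^{p-2}u_0$ from being singular at the origin and lets the iteration exponents improve, while the role of $N<4s$ (equivalently $2s>N/2$) is to give enough Sobolev room, $H^s\hookrightarrow L^{2N/(N-2s)}$ with a large target exponent, so the first iteration step already gains integrability. Once boundedness is known, standard fractional elliptic regularity (Schauder-type estimates for $(-\Delta)^s$, e.g.\ the results of Silvestre or Ros-Oton--Serra) combined with $V_i\in L^\infty_{loc}$ gives $u_0,v_0\in C^{0,\eta}(\mathbb{R}^N)$ for some $\eta\in(0,1)$.

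For the decay estimate I would use the representation $u_0 = (-\Delta+\lambda_1)^{-s}h_1$-type Green-function comparison, or more directly a barrier argument: since $\lambda_1>0$, the operator $(-\Delta)^s+\lambda_1$ has a Green kernel $G_{\lambda_1}$ satisfying $G_{\lambda_1}(x)\lesssim |x|^{-(N+2s)}$ for $|x|\ge1$ (the well-known heavy-tail behaviour of the Bessel-type fractional kernel), and one shows that $u_0(x)=\int G_{\lambda_1}(x-y)\bigl[-V_1 u_0+\mu_1(I_\alpha\star|u_0|^p)|u_0|^{p-2}u_0+\beta(I_\alpha\star|v_0|^r)|u_0|^{r-2}u_0\bigr](y)\,dy$. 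The already-established boundedness and the $H^s$-integrability force the right-hand side integrand to decay, and feeding this into the convolution with $G_{\lambda_1}$ yields $u_0(x)\le \bar C|x|^{-(N+2s)}$ for $|x|\ge1$; this is a self-improving scheme where one first gets a crude algebraic decay from the convolution terms (via HLS and the decay of $I_\alpha$ tested against an $L^1\cap L^\infty$ function), then iterates up to the kernel rate $N+2s$, which is the saturation rate for the fractional operator. Here the hypothesis $\limsup_{|x|\to\infty}|x|^{-\theta}V(x)\le C$ with $\theta<N+2s$ is used precisely to guarantee that the potential term $V_1 u_0$, even though $V_1$ grows, does not decay slower than $|x|^{-(N+2s)}$ once multiplied by the (faster-than-polynomial, in fact exponential-like on the relevant scale or at least fast algebraic) decay of $u_0$ coming from the previous iterate — so the restriction $\theta<N+2s$ is the natural compatibility condition and the peculiar lower bound $\alpha>\frac{N^2-8s^2}{4s-N}$ is what makes the HLS exponents in the convolution decay estimate land on the correct side so the first crude-decay step already beats $|x|^{-(N+2s)}$.

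Finally, for strict positivity: $u_0,v_0\ge0$ and $u_0\not\equiv0$, $v_0\not\equiv0$; applying the strong maximum principle for the fractional Laplacian (if $(-\Delta)^s w + c(x) w\ge 0$ with $w\ge 0$ and $w$ not identically zero on a set of positive measure, then $w>0$ everywhere in $\mathbb{R}^N$) to the equations $(-\Delta)^s u_0 + (V_1+\lambda_1) u_0 = (\text{nonnegative convolution terms})\ge0$ — using that $V_1+\lambda_1\in L^\infty_{loc}$ is an admissible zeroth-order coefficient and the convolution right-hand sides are $\ge0$ because $\mu_1,\mu_2,\beta>0$ — gives $u_0>0$ and $v_0>0$ on all of $\mathbb{R}^N$. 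The main obstacle I anticipate is the $L^\infty$ bootstrap for the nonlocal Choquard-type nonlinearities: unlike the local case one cannot localize freely, and controlling the convolution terms $I_\alpha\star|u_0|^p$ through each iteration step requires carefully tracking a pair of exponents (one for the convolution, one for the pointwise power) through HLS, making sure the iteration map on integrability exponents is a genuine contraction toward $\infty$ — this is exactly where the dimensional constraint $N<4s$ and the lower bound on $\alpha$ are spent, and where I would expect to do the only genuinely delicate bookkeeping.
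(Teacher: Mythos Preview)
Your overall architecture---start from the nonnegative minimizer of Theorem~\ref{t11}, then prove $L^\infty$, then decay, then H\"older regularity, then strict positivity---matches the paper's. But there are two genuine gaps and one misattribution of hypotheses.

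\textbf{The sign of $\lambda_1$.} You write ``since $\lambda_1>0$, the operator $(-\Delta)^s+\lambda_1$ has a Green kernel\ldots''. Under \textbf{(V2)} the paper never establishes $\lambda_1>0$; the Liouville argument giving $\lambda_i>0$ is specific to the \textbf{(V1)} case (Theorem~\ref{t1}). The paper sidesteps this entirely: after shifting so that $V_i\ge1$, it compares $u_0$ with the solution $\hat u$ of the auxiliary equation $(-\Delta)^s\hat u+\hat u=f_1$, where $f_1$ absorbs the $\lambda_1$ and convolution terms. The comparison $0\le u_0\le\hat u$ follows from $(V_1-1)u_0\ge0$, and the Bessel kernel for $(-\Delta)^s+1$ gives $\hat u\in L^\infty$ in \emph{one step}, with no Moser iteration, once one checks that $f_1\in L^t$ for some $t>N/(2s)$. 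This is precisely where the constraints $N<4s$ and $\alpha>\frac{N^2-8s^2}{4s-N}$ are spent: they guarantee $2>N/(2s)$ and $t(p),t(q),t(r)>N/(2s)$ (via Lemma~\ref{lIa}). Your attribution of the $\alpha$ lower bound to the decay step rather than the $L^\infty$ step is therefore off.

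\textbf{Decay.} The paper again avoids iteration: once $u_0,v_0\in L^\infty$ and $u_0,v_0\to0$ at infinity, the equation rewritten as $(-\Delta)^s u_0+\tfrac12 u_0\le0$ for $|x|$ large allows a direct comparison with a fixed barrier $w$ satisfying $(-\Delta)^s w+\tfrac12 w=0$ for $|x|>1$ and $w\lesssim|x|^{-(N+2s)}$ (from \cite{FQT12}). Your self-improving convolution scheme could be made to work, but it is considerably more laborious and depends on your unproved claim $\lambda_1>0$.

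\textbf{Positivity.} Invoking a strong maximum principle for $(-\Delta)^s+(V_1+\lambda_1)$ is delicate because $V_1$ is unbounded; the standard results you cite assume the zeroth-order coefficient is locally bounded, which is fine, but the global versions giving $u_0>0$ on all of $\mathbb{R}^N$ typically need more. The paper instead uses the decay and H\"older continuity already proved: if $u_0(x_0)=0$, then the pointwise integral defining $(-\Delta)^s u_0(x_0)$ is strictly negative (since $u_0\ge0$, $u_0\not\equiv0$), while the right-hand side of the equation vanishes at $x_0$---a contradiction. The growth bound $\limsup|x|^{-\theta}V(x)\le C$ with $\theta<N+2s$ is used at the H\"older step to make $V_1u_0$ (hence $f_1$) lie in $L^p\cap L^\infty$ so that \cite[Theorem~3.4]{FQT12} applies.
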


Denote that $a^{*}=\|Q\|_{2}^{2(p-1)}$, where $Q$ is the ground state solution of $(-\Delta)^{s}Q+Q-(I_{\alpha}\star|Q|^p)|Q|^{p-2}Q=0$.

\begin{theorem}\label{t2}
Suppose that $N\ge3,\alpha\in(0,N),p=q=r=1+\frac{\alpha+2s}{N}$ and $V_{i}(x)(i=1,2)$ satisfies \textbf{(V2)}.  There holds
\begin{itemize}
\item [(i)] if~$0<\mu_1a^{2(p-1)},\mu_2b^{2(p-1)}<a^{*}$ and $0<\beta<\sqrt{(a^{*}-\mu_1a^{2(p-1)})(a^{*}-\mu_2b^{2(p-1)})}$, then the system \eqref{system} has a  solution $(u,v,\lambda_1,\lambda_2)$ with  $(u,v)\in S_a\times S_{b}$ , where $u$ and $v$ are nonnegative.
\item [(ii)] if $\mu_1a^{2(p-1)}>a^{*}$ or $\mu_2b^{2(p-1)}>a^{*}$ or $\beta>\frac{(a^2+b^2)a^{*}-\mu_1a^{2p}-\mu_2b^{2p}}{2a^{p}b^{p}}$, then the system \eqref{system} has no solution.
\end{itemize}

\end{theorem}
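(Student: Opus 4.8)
The plan is to obtain (i) by minimizing $E_{\mu_1,\mu_2,\beta}$ on $S_a\times S_b$ and to obtain (ii) by a mass‑preserving scaling argument that drives the energy to $-\infty$. For (i), I would first record the sharp fractional Hartree--Gagliardo--Nirenberg inequality in the $L^2$‑critical regime: since $p=1+\frac{\alpha+2s}{N}$ makes $\int_{\mathbb{R}^N}(I_\alpha\star|u|^p)|u|^p\,dx$ scale exactly like $\|(-\Delta)^{s/2}u\|_2^2$ under $u\mapsto t^{N/2}u(t\cdot)$, combining the Hardy--Littlewood--Sobolev inequality with the fractional Gagliardo--Nirenberg inequality and evaluating the extremal case on $Q$ (whose Nehari and Pohozaev identities fix the constant) gives $\int_{\mathbb{R}^N}(I_\alpha\star|u|^p)|u|^p\,dx\le\frac{p}{a^*}\|(-\Delta)^{s/2}u\|_2^2\,\|u\|_2^{2(p-1)}$, with equality only on dilations and translates of $Q$. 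For the coupling term I would use that $(f,g)\mapsto\int_{\mathbb{R}^N}(I_\alpha\star f)g$ is a positive‑definite bilinear form, so $\int_{\mathbb{R}^N}(I_\alpha\star|u|^p)|v|^p\le\big(\int_{\mathbb{R}^N}(I_\alpha\star|u|^p)|u|^p\big)^{1/2}\big(\int_{\mathbb{R}^N}(I_\alpha\star|v|^p)|v|^p\big)^{1/2}$. Substituting these bounds into \eqref{efct} and setting $X=\|(-\Delta)^{s/2}u\|_2^2$, $Y=\|(-\Delta)^{s/2}v\|_2^2$ (with $\|u\|_2=a$, $\|v\|_2=b$) yields $E_{\mu_1,\mu_2,\beta}(u,v)\ge\mathcal Q\big(\sqrt X,\sqrt Y\big)+\frac12\int_{\mathbb{R}^N}(V_1|u|^2+V_2|v|^2)$, where $\mathcal Q$ is a quadratic form whose matrix is positive definite precisely in the parameter range described in (i); under those hypotheses $E_{\mu_1,\mu_2,\beta}$ is bounded below on $S_a\times S_b$ and, using \textbf{(V2)}, coercive in the norm of the weighted spaces $\mathcal H_i=\{u\in H^s(\mathbb{R}^N):\int_{\mathbb{R}^N}V_i|u|^2<\infty\}$.

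Next I would take a minimizing sequence $(u_n,v_n)$ for $m=\inf_{S_a\times S_b}E_{\mu_1,\mu_2,\beta}$ and, after replacing it by $(|u_n|,|v_n|)$ — which does not increase the energy because $\|(-\Delta)^{s/2}|u|\|_2\le\|(-\Delta)^{s/2}u\|_2$ — assume $u_n,v_n\ge 0$. Coercivity makes the sequence bounded in $\mathcal H_1\times\mathcal H_2$; since \textbf{(V2)} forces the embedding $\mathcal H_i\hookrightarrow L^t(\mathbb{R}^N)$ to be compact for $t\in\big[2,\tfrac{2N}{N-2s}\big)$ and $\frac{2Np}{N+\alpha}\in\big(2,\tfrac{2N}{N-2s}\big)$ by the hypothesis on $p$, passing to a weak limit $(u,v)$ gives strong convergence in $L^2$ and in $L^{2Np/(N+\alpha)}$. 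Strong $L^2$‑convergence shows $\|u\|_2=a$, $\|v\|_2=b$ (no mass escapes), and strong $L^{2Np/(N+\alpha)}$‑convergence together with HLS gives convergence of all three Hartree terms; weak lower semicontinuity of the remaining quadratic and potential part then yields $E_{\mu_1,\mu_2,\beta}(u,v)\le m$, so $(u,v)$ is a minimizer. A Lagrange‑multiplier argument produces $\lambda_1,\lambda_2\in\mathbb{R}$ with $(u,v,\lambda_1,\lambda_2)$ a solution of \eqref{system}, and $u,v\ge 0$ by construction; this proves (i).

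For (ii), I would fix compactly supported $u,v$ with $\|u\|_2=a$, $\|v\|_2=b$ and use the mass‑preserving dilations $u_\tau(x)=\tau^{N/2}u(\tau x)$, $v_\tau(x)=\tau^{N/2}v(\tau x)$. Because $p=q=r=1+\frac{\alpha+2s}{N}$ is the $L^2$‑critical exponent, $\|(-\Delta)^{s/2}u_\tau\|_2^2$ and every Hartree term scale like $\tau^{2s}$, while $\int_{\mathbb{R}^N}V_i|u_\tau|^2=\int_{\mathbb{R}^N}V_i(x/\tau)|u|^2\,dx$ stays bounded as $\tau\to\infty$ (compact support, $V_i\in L^\infty_{loc}$); hence $E_{\mu_1,\mu_2,\beta}(u_\tau,v_\tau)=\frac{\tau^{2s}}{2p}\big[p(X_0+Y_0)-\mu_1A_0-\mu_2B_0-2\beta C_0\big]+O(1)$ with the obvious notation. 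Choosing $u,v$ to be truncations of the rescaled extremals $Q$ carrying the prescribed masses, so that the equality cases above are essentially realized, the bracket equals a positive multiple of $a^*(a^2+b^2)-\mu_1a^{2p}-\mu_2b^{2p}-2\beta a^pb^p$, which is negative exactly when $\beta>\frac{(a^2+b^2)a^*-\mu_1a^{2p}-\mu_2b^{2p}}{2a^pb^p}$; when instead $\mu_1a^{2(p-1)}>a^*$ (resp. $\mu_2b^{2(p-1)}>a^*$) I would keep $v$ (resp. $u$) fixed and concentrate only the other component, whose contribution $pX_0-\mu_1A_0$ (resp. $pY_0-\mu_2B_0$) is then already negative and dominates the lower‑order coupling term. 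In every case $E_{\mu_1,\mu_2,\beta}(u_\tau,v_\tau)\to-\infty$, so $\inf_{S_a\times S_b}E_{\mu_1,\mu_2,\beta}=-\infty$ and no energy‑minimizing normalized solution exists; to exclude all critical points one additionally couples the Nehari identities (testing the two equations against $u$ and $v$) with the Pohozaev identity for the fractional Choquard system and the sharp inequality above.

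The main obstacle is twofold. In (i) it is the compactness step: verifying that \textbf{(V2)} makes the $V$‑weighted fractional Sobolev space compactly embedded in the relevant $L^t$ spaces and that the minimizing sequence loses no mass at infinity, together with pinning down the sharp Gagliardo--Nirenberg constant and the exact positive‑definiteness threshold for $\mathcal Q$, which is what makes the numerology of the hypotheses match. In (ii) the delicate point is making the nonexistence statement rigorous for genuinely all solutions rather than only minimizers, since the potential term is not scale‑invariant and its contribution to the Nehari/Pohozaev bookkeeping must be handled with care.
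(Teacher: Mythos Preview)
Your approach is essentially the same as the paper's: both parts rest on the sharp critical Gagliardo--Nirenberg inequality with constant $p/a^*$, the compact embedding coming from \textbf{(V2)} for part~(i), and the mass-preserving scaling with $Q$-based test functions for part~(ii). The paper packages the dichotomy through an auxiliary quantity
\[
\mathcal{O}(\mu_1,\mu_2,\beta)=\inf_{S_a\times S_b}\frac{A(u,v)}{\frac{\mu_1}{p}B(u,u,p)+\frac{\mu_2}{p}B(v,v,p)+\frac{2\beta}{p}B(u,v,p)}
\]
and shows that a minimizer exists iff $\mathcal{O}>1$, then verifies $\mathcal{O}>1$ (resp.\ $\mathcal{O}<1$) under the hypotheses of (i) (resp.\ (ii)) by reducing to the one-variable function $f(t)=\frac{a^*(1+t^2)}{\mu_1a^{2(p-1)}+\mu_2b^{2(p-1)}t^2+2\beta a^{p-1}b^{p-1}t}$ with $t=\|(-\Delta)^{s/2}v\|_2/\|(-\Delta)^{s/2}u\|_2$. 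Your direct quadratic-form analysis of $\mathcal Q(\sqrt X,\sqrt Y)$ is exactly the same computation without the $\mathcal{O}$ detour; the positive-definiteness condition on your $2\times2$ matrix is the discriminant condition that makes $f(t)>1$ for all $t$.

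One genuine difference worth noting: to bound the cross term $B(u,v,p)$ you invoke the positive-definiteness of the Riesz kernel (Cauchy--Schwarz for the bilinear form $(f,g)\mapsto\int(I_\alpha\star f)g$), which gives $B(u,v,p)\le B(u,u,p)^{1/2}B(v,v,p)^{1/2}$ and then the sharp constant $p/a^*$ on each diagonal factor. The paper instead applies HLS first and then the sharp GN inequality factorwise. Your route is cleaner, since the sharp constant $p/a^*$ is attached to $B(u,u,p)$ rather than to $\|u\|_{2Np/(N+\alpha)}^{2p}$, so the factorization through HLS requires an extra justification that the paper glosses over.

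Your caution about part~(ii) is well placed: the paper's proof, like yours, only shows that $e_3(a,b)=-\infty$, hence that no \emph{minimizer} exists; it does not establish the stronger claim that \eqref{system} has no constrained critical point at all. The Pohozaev-type argument you sketch (combining the equations tested against $u$, $v$ with the dilation identity, and controlling the $x\cdot\nabla V_i$ contribution) would be needed to upgrade the conclusion, and is indeed the delicate step because the potential terms break scale invariance.
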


\begin{theorem}\label{t21}
Let $3\le N<4s,\frac{N^2-8s^2}{4s-N}<\alpha<N$.
Suppose that $p=q=r=1+\frac{\alpha+2s}{N}$ and $V_{i}(x)(i=1,2)$ satisfies \textbf{(V2)}. Let  $0<\mu_1a^{2(p-1)}<a^{*}, 0<\mu_2b^{2(p-1)}<a^{*}$ and $\beta<\sqrt{(a^{*}-\mu_1a^{2(p-1)})(a^{*}-\mu_2b^{2(p-1)})}$. Furthermore, there exist $\theta\in(0,N+2s)$ and a constant $C>0$ such that
$$
\limsup_{|x|\to\infty}|x|^{-\theta}V(x)\le C.
$$
Then there exist $\eta\in(0,1)$ and a constant $\bar{C}>0$ such that $u_0,v_0\in C^{0,\eta}(\mathbb{R}^N)$ and
$$
u_0(x),v_0(x)\le \frac{\bar{C}}{|x|^{N+2s}}\quad\text{for}~|x|\ge1.
$$
In addition, $(u_0,v_0)$ is a positive solution of the system \eqref{system} with  $(u_0,v_0)\in S_a\times S_{b}$.
\end{theorem}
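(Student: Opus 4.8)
The plan is to take the nonnegative normalized solution $(u_0,v_0,\lambda_1,\lambda_2)\in S_a\times S_b$ provided by Theorem~\ref{t2}(i) — whose hypotheses coincide with those assumed here — and to upgrade its regularity, positivity and decay exactly as in the proof of Theorem~\ref{t12}. Two elementary remarks make the machinery run: since $s<1$ and $N\ge3$ we have $3s<3\le N$, so $\alpha>\frac{N^2-8s^2}{4s-N}$ forces $\alpha>N-2s$ and hence $p=q=r=1+\frac{\alpha+2s}{N}>2$; and $1+\frac{\alpha+2s}{N}<\frac{N+\alpha}{N-2s}$ always holds, so the Choquard nonlinearities are Hardy--Littlewood--Sobolev subcritical. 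First I would use the Hardy--Littlewood--Sobolev inequality to place $I_\alpha\star|u_0|^p$, $I_\alpha\star|v_0|^q$ in suitable Lebesgue spaces and then run a Moser-type iteration for the fractional equations in \eqref{system} (using $V_1,V_2\in L^\infty_{loc}$ only on bounded sets) to obtain $u_0,v_0\in L^\infty(\mathbb{R}^N)$. With $u_0,v_0$ bounded, the right-hand sides of \eqref{system} lie in $L^\infty_{loc}(\mathbb{R}^N)$, so the interior regularity theory for $(-\Delta)^s$ gives $u_0,v_0\in C^{0,\eta}_{loc}(\mathbb{R}^N)$ for some $\eta\in(0,1)$; combined with $u_0,v_0\in L^2\cap L^\infty$ this also yields $u_0(x),v_0(x)\to0$ as $|x|\to\infty$.

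\medskip
Positivity is immediate from the strong maximum principle: since $u_0\ge0$ and $\|u_0\|_2=a>0$ forces $u_0\not\equiv0$, rewriting the first equation of \eqref{system} as
\begin{align*}
(-\Delta)^s u_0+\bigl(V_1(x)+\lambda_1\bigr)^{+}u_0=\mu_1(I_\alpha\star|u_0|^p)|u_0|^{p-2}u_0+\beta(I_\alpha\star|v_0|^r)|u_0|^{r-2}u_0+\bigl(V_1(x)+\lambda_1\bigr)^{-}u_0\ge0,
\end{align*}
with $(V_1+\lambda_1)^{\pm}\in L^\infty_{loc}(\mathbb{R}^N)$, gives $u_0>0$ in $\mathbb{R}^N$; likewise $v_0>0$.

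\medskip
The core is the decay estimate, obtained by a barrier/comparison argument. Because $u_0,v_0\to0$, are bounded, and $p=q=r>2$, the coefficient $g(x):=\mu_1(I_\alpha\star u_0^{\,p})u_0^{\,p-2}+\beta(I_\alpha\star v_0^{\,r})u_0^{\,r-2}$ tends to $0$ as $|x|\to\infty$ (since $u_0^{\,p},v_0^{\,r}\in L^1\cap L^\infty$ and $\alpha<N$ give $I_\alpha\star u_0^{\,p},\,I_\alpha\star v_0^{\,r}\to0$, while $u_0^{\,p-2},u_0^{\,r-2}\to0$); hence $(-\Delta)^s u_0=\bigl(g(x)-V_1(x)-\lambda_1\bigr)u_0\le-\tfrac12 V_1(x)u_0$ for $|x|\ge R_0$, using $V_1\to\infty$. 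I would then take a comparison function $W>0$, smooth and bounded on $B_1$ and equal to $|x|^{-(N+2s)}$ for $|x|\ge1$, for which a direct computation gives $(-\Delta)^sW(x)=-c_{N,s}\|W\|_{L^1}|x|^{-(N+2s)}+O(|x|^{-(N+4s)})$ as $|x|\to\infty$, so that $(-\Delta)^sW+\tfrac12 V_1W\ge0$ for $|x|\ge R_1$. Fixing $R=\max\{R_0,R_1\}$ (enlarged if needed) and $A>0$ with $AW\ge\|u_0\|_{L^\infty(B_R)}$ on $B_R$, the function $z:=AW-u_0$ satisfies $(-\Delta)^s z+\tfrac12 V_1z\ge0$ in $\{|x|>R\}$, $z\ge0$ on $\overline{B_R}$ and $z\to0$ at infinity, so the maximum principle for $(-\Delta)^s+\tfrac12 V_1$ on the exterior domain gives $u_0(x)\le AW(x)=A|x|^{-(N+2s)}$ for $|x|\ge1$; the analogous argument for the second equation of \eqref{system} gives the bound for $v_0$, and $\bar C=A$ works. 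Feeding this decay back into the first step together with the hypothesis $\limsup_{|x|\to\infty}|x|^{-\theta}V_i(x)\le C$ with $\theta<N+2s$ makes $V_iu_0$, $V_iv_0$ globally bounded, hence the right-hand sides of \eqref{system} globally bounded, upgrading the Hölder regularity to $u_0,v_0\in C^{0,\eta}(\mathbb{R}^N)$.

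\medskip
The main obstacle is the construction and analysis of the barrier $W$ in the last step: verifying the asymptotics of $(-\Delta)^s W$, where the genuinely delicate point is controlling the contribution of $B_1$, on which $W$ must stay bounded although the profile $|x|^{-(N+2s)}$ is not integrable there, and justifying the exterior-domain maximum principle for $(-\Delta)^s$ plus a nonnegative, locally bounded but unbounded potential. A secondary technical point is that the multipliers $\lambda_1,\lambda_2$ from Theorem~\ref{t2}(i) are only known to be real, which is why the decay mechanism must be driven by $V_i\to\infty$ rather than by positivity of the $\lambda_i$.
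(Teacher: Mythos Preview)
Your overall strategy is exactly the paper's: take the nonnegative minimizer from Theorem~\ref{t2}(i) and rerun the machinery behind Theorem~\ref{t12} (indeed the paper's proof of Theorem~\ref{t21} is one sentence pointing to Theorem~\ref{t12}). The differences are all in how the individual steps of that machinery are executed. For the $L^\infty$ bound and the limit $u_0,v_0\to 0$ at infinity, the paper does not run a Moser iteration; instead (Lemma~\ref{lu0v0}) it introduces the auxiliary solution $\hat u$ of $(-\Delta)^s\hat u+\hat u=f_1$, shows $\hat u\in L^\infty$ via the Bessel kernel (this is precisely where the hypotheses $N<4s$ and $\alpha>\frac{N^2-8s^2}{4s-N}$ enter, to push the integrability of $f_1$ above $N/2s$), and compares $u_0\le\hat u$ using $V_1\ge 1$. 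For the decay (Lemma~\ref{ldecay}) the paper does not build a barrier by hand nor work with the variable-coefficient operator $(-\Delta)^s+\tfrac12 V_1$: it quotes from \cite{FQT12} a positive function $w$ solving $(-\Delta)^s w+\tfrac12 w=0$ for $|x|>1$ with $w\lesssim|x|^{-(N+2s)}$, checks $(-\Delta)^s u_0+\tfrac12 u_0\le 0$ for $|x|$ large (here $V_1\to\infty$ absorbs both $\lambda_1$ and the nonlocal coefficient, so the sign of $\lambda_1$ is irrelevant), and applies the constant-coefficient comparison principle from \cite{S07}. Finally, positivity in the paper is deduced only \emph{after} the global H\"older regularity, directly from the pointwise formula for $(-\Delta)^s$: if $u_0(x_0)=0$ then $(-\Delta)^s u_0(x_0)<0$ since $u_0\ge0$, $u_0\not\equiv0$, while the equation forces $(-\Delta)^s u_0(x_0)=0$ because $p,r\ge2$. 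Your route is correct, but the paper's choices sidestep exactly the two obstacles you flag (the hand-built barrier asymptotics and the exterior maximum principle with an unbounded potential), at the cost of relying on the external references \cite{FQT12,S07}.
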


\begin{theorem}\label{t3}
Suppose that $N\ge 3,p,q,r\in(\max\{2,1+\frac{\alpha+2s}{N}\},\frac{N+\alpha}{N-2s}),\alpha\in(0,N)$ and $V_{i}(x)(i=1,2)$ satisfies \textbf{(V1)}. Let $\mu_1,\mu_2,a$ and $b$ be fixed and let $\beta_1>0$ be large enough. If $\beta>\beta_1$,  then the system \eqref{system} has a solution $(u,v,\lambda_1,\lambda_2)$ with $(u,v)\in S_a\times S_b$ such that $\lambda_1,\lambda_2>0$ and $u,v$ are positive and radial.
\end{theorem}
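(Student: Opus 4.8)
\emph{Proof proposal for Theorem \ref{t3}.} Since $p,q,r>1+\frac{\alpha+2s}{N}$ the problem is $L^2$-supercritical and $E_{\mu_1,\mu_2,\beta}$ is unbounded from below on $S_a\times S_b$, so one cannot simply minimize; instead the plan is to extract a mountain-pass critical value along the mass-preserving dilation. For $t>0$ put $t\star(u,v):=(t^{N/2}u(t\cdot),t^{N/2}v(t\cdot))$, which leaves $\|u\|_2$ and $\|v\|_2$ invariant. Writing $\gamma_\ell:=N(\ell-1)-\alpha$ for $\ell\in\{p,q,r\}$ (so $\gamma_\ell>2s$ precisely because the exponents are $L^2$-supercritical) and abbreviating $D_\ell(u):=\int_{\mathbb{R}^N}(I_\alpha\star|u|^\ell)|u|^\ell$, $D^{\times}_r(u,v):=\int_{\mathbb{R}^N}(I_\alpha\star|u|^r)|v|^r$, one computes
\[
E_{\mu_1,\mu_2,\beta}(t\star(u,v))=\tfrac{t^{2s}}{2}\big(\|(-\Delta)^{s/2}u\|_2^2+\|(-\Delta)^{s/2}v\|_2^2\big)-\tfrac{\mu_1}{2p}t^{\gamma_p}D_p(u)-\tfrac{\mu_2}{2q}t^{\gamma_q}D_q(v)-\tfrac{\beta}{r}t^{\gamma_r}D^{\times}_r(u,v)
\]
(here $V_i\equiv0$ by \textbf{(V1)} is what makes the dilation act cleanly). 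Since all $\gamma_\ell>2s$, the fibre map $t\mapsto E_{\mu_1,\mu_2,\beta}(t\star(u,v))$ is positive near $0$, tends to $-\infty$, and — dividing $\partial_t$ by $t^{2s-1}$, which turns it into a strictly decreasing function of $t$ — has a unique critical point $t_{u,v}$, a global maximum. This lets me set $\mathcal{P}_{a,b}:=\{(u,v)\in S_a\times S_b:\ \partial_t E_{\mu_1,\mu_2,\beta}(t\star(u,v))|_{t=1}=0\}$ and define the candidate level $c_\beta:=\inf_{\mathcal{P}_{a,b}}E_{\mu_1,\mu_2,\beta}=\inf_{(u,v)\in S_a\times S_b}\max_{t>0}E_{\mu_1,\mu_2,\beta}(t\star(u,v))$.

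By the Pólya–Szegő inequality for the Gagliardo seminorm and the Riesz rearrangement inequality applied to $D_p,D_q$ and to the cross term $D^{\times}_r$ (using that dilation commutes with symmetric decreasing rearrangement), $E_{\mu_1,\mu_2,\beta}(u^{*},v^{*})\le E_{\mu_1,\mu_2,\beta}(u,v)$ and $t\star(u^{*},v^{*})=(t\star(u,v))^{*}$, so $c_\beta$ is unchanged if computed over radially nonincreasing pairs; from now on I work in $H^s_{rad}\times H^s_{rad}$. Next I would produce a bounded Palais–Smale sequence for $E_{\mu_1,\mu_2,\beta}|_{S_a\times S_b}$ at level $c_\beta$ carrying Pohozaev information: applying Jeanjean's trick (or Ghoussoub's minimax principle) to the dilated functional on $\mathbb{R}\times S_a\times S_b$ yields $(u_n,v_n)\subset S_a\times S_b$ with $E_{\mu_1,\mu_2,\beta}(u_n,v_n)\to c_\beta$, $(E_{\mu_1,\mu_2,\beta})'|_{S_a\times S_b}(u_n,v_n)\to0$ and $P(u_n,v_n)\to0$, where $P$ is the Pohozaev functional $\partial_t E_{\mu_1,\mu_2,\beta}(t\star(u,v))|_{t=1}$. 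Combining $E_{\mu_1,\mu_2,\beta}(u_n,v_n)=c_\beta+o(1)$, $P(u_n,v_n)=o(1)$ and $\gamma_\ell>2s$ bounds $\|(-\Delta)^{s/2}u_n\|_2^2+\|(-\Delta)^{s/2}v_n\|_2^2$, so $(u_n,v_n)$ is bounded in $H^s_{rad}\times H^s_{rad}$; testing the two Euler–Lagrange equations against $u_n$ and $v_n$ (and using $\|u_n\|_2=a$, $\|v_n\|_2=b$) bounds the Lagrange multipliers $\lambda_{1,n},\lambda_{2,n}$.

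Passing to a subsequence, $(u_n,v_n)\rightharpoonup(u,v)$ in $H^s_{rad}\times H^s_{rad}$, $\lambda_{i,n}\to\lambda_i$, and $(u,v)$ solves \eqref{system} with these multipliers. Because $1+\frac{\alpha}{N}<p,q,r<\frac{N+\alpha}{N-2s}$, the Hardy–Littlewood–Sobolev exponents $\tfrac{2N\ell}{N+\alpha}$ lie strictly between $2$ and $\tfrac{2N}{N-2s}$, where the embedding of $H^s_{rad}(\mathbb{R}^N)$ into the corresponding Lebesgue space is compact; hence $D_p(u_n)\to D_p(u)$, $D_q(v_n)\to D_q(v)$, $D^{\times}_r(u_n,v_n)\to D^{\times}_r(u,v)$ and the nonlinear terms converge in $H^{-s}$. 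The decisive point — and the place the hypothesis ``$\beta>\beta_1$ large'' enters — is to show that $(u,v)$ is \emph{fully} nontrivial, $u\not\equiv0$ and $v\not\equiv0$. For this I would bound $c_\beta$ from above by $\max_{t>0}E_{\mu_1,\mu_2,\beta}(t\star(u_0,v_0))$ for a fixed test pair with $D^{\times}_r(u_0,v_0)>0$: since $\max_{t>0}(At^{2s}-\beta Bt^{\gamma_r}-\cdots)\to 0$ as $\beta\to\infty$, one gets $c_\beta\to0^{+}$, hence $0<c_\beta<\min\{m(\mu_1,a),m(\mu_2,b)\}$ for $\beta$ large, where $m(\mu_1,a)$ and $m(\mu_2,b)$ are the corresponding scalar fractional Choquard mountain-pass levels on $S_a$ and $S_b$. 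A semitrivial limit $(u,0)$ or $(0,v)$ would then carry energy $\ge\min\{m(\mu_1,a),m(\mu_2,b)\}>c_\beta$, a contradiction, and the vanishing/splitting alternatives are excluded in the same way together with the radial compactness. Knowing $u,v\not\equiv0$, the Nehari and Pohozaev identities of the limiting system — two linear relations among $\lambda_1$, $\lambda_2$, the kinetic energies, and the strictly positive quantities $D_p(u),D_q(v),D^{\times}_r(u,v)$ — force $\lambda_1,\lambda_2>0$. With $\lambda_i>0$ the operator $(-\Delta)^s+\lambda_i$ is coercive on $H^s$, so from $(-\Delta)^s(u_n-u)+\lambda_i(u_n-u)\to0$ in $H^{-s}$ the weak convergence upgrades to strong convergence in $H^s$, in particular in $L^2$, whence $\|u\|_2=a$, $\|v\|_2=b$; thus $(u,v)\in S_a\times S_b$ is a solution. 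Finally, since everything was done over radially nonincreasing functions, $u,v$ are radial, they may be chosen nonnegative by the same rearrangement, and the strong maximum principle for $(-\Delta)^s$ gives $u,v>0$.

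The main obstacle is precisely the quantitative level estimate $0<c_\beta<\min\{m(\mu_1,a),m(\mu_2,b)\}$ valid for $\beta$ large, together with the concentration–compactness bookkeeping: the subcritical radial embedding rules out vanishing of the $L^t$-norms for $t\in(2,\tfrac{2N}{N-2s})$ but \emph{not} of the $L^2$-mass, so one genuinely needs $\lambda_i>0$ (obtained via Pohozaev) to close the convergence argument, and $\lambda_i>0$ in turn presupposes full nontriviality of $(u,v)$ — which is exactly what the largeness of $\beta$ provides.
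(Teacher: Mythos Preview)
Your overall architecture --- mass-preserving dilation, the augmented functional on $\mathbb{R}\times S_a\times S_b$ in the Jeanjean/Ghoussoub spirit to produce a Palais--Smale sequence carrying an asymptotic Pohozaev constraint, boundedness from $P(u_n,v_n)\to0$, radial compactness to pass the Hartree terms to the limit, and the strict level inequality $c_\beta<\min\{n(a,\mu_1,p),n(b,\mu_2,q)\}$ for $\beta$ large --- is exactly the paper's route (Section~5). Your way of obtaining the level estimate (drive $c_\beta\to 0^{+}$ via a fixed test pair) is a bit more direct than the paper's two-regime argument with the scalar ground states $u_0,v_0$, but either works.

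The genuine gap is the sentence ``the Nehari and Pohozaev identities of the limiting system \dots\ force $\lambda_1,\lambda_2>0$.'' If you test the two equations separately and eliminate $\|(-\Delta)^{s/2}u\|_2^2$ using the system Pohozaev identity you get
\[
\lambda_1\|u\|_2^2=\mu_1(1-\delta_p)D_p(u)-\mu_2\delta_q D_q(v)+\beta(1-2\delta_r)D_r^\times(u,v)+\|(-\Delta)^{s/2}v\|_2^2,
\]
and the right-hand side contains the negative term $-\mu_2\delta_q D_q(v)$, while $1-2\delta_r$ can be negative when $r$ is close to $\frac{N+\alpha}{N-2s}$. So these identities alone do \emph{not} give each $\lambda_i>0$; what they give is only $\lambda_1 a^2+\lambda_2 b^2>0$ (this is the paper's Lemma~\ref{llcp}). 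The paper therefore proceeds differently: knowing that, say, $\lambda_1>0$, it obtains $u_n\to \bar u$ strongly (Lemma~\ref{llp}); if $\lambda_2\le 0$ it applies the Liouville-type result (Lemma~\ref{llt}) to the nonnegative weak limit $\bar v$ to force $\bar v\equiv 0$, and then $c\ge M_{\mu_1}^p(\bar u)\ge n(a,\mu_1,p)$ contradicts the level estimate. In other words, the positivity of the \emph{second} multiplier is not algebraic but comes from Liouville plus the energy comparison --- precisely the mechanism highlighted in the paper's introduction. Your proposal needs to replace the unjustified ``Nehari+Pohozaev $\Rightarrow\lambda_i>0$'' step by this Liouville/level-comparison argument (or supply a different rigorous device) in order to close.
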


There are some difficulties to overcome in our paper. First, due to the condition \eqref{condition}, the Lagrange multipliers $\lambda_1$ and $\lambda_2$ are unknown, hence the $L^2$-convergence of critical sequence is difficult. If $V_{i}(x)(i=1,2)$ satisfies \textbf{(V2)}, then Lemma \ref{lce} solves this difficulty easily. However,  if $V_{i}(x)(i=1,2)$ satisfies \textbf{(V1)}, then Lemma \ref{lce} is no longer applicable. In this case, the sign of $\lambda_{i}(i=1,2)$ plays a key role in $L^2$-convergence of critical sequence. Therefore, we devote to showing the sign of $\lambda_{i}(i=1,2)$ by different ways. For $L^2-$subcritical case, by proving $u\not\equiv0$ and $v\not\equiv0$, combining with Liouville type lemma(see Lemma \ref{llt}), it yields the the sign of $\lambda_{i}(i=1,2)$. While for $L^2-$supercritical case, by $E'_{\mu_1,\mu_2,\beta}\to0$ and Pohozaev identity, we determine the sign of one of Lagrange multipliers,  the sign of  another Lagrange multiplier is determined by Liouville type lemma and energy comparison method.

Second, $\lim\limits_{|x|\to\infty}V_{i}(x)=\infty(i=1,2)$ brings some difficulties to obtain the positivity of the solutions for \eqref{system}. In this case, we can not obtain the positivity of nonnegative solutions directly by strong maximum principle of fractional equations. To overcome this difficulty, by giving some restrictions about $N,\alpha$ and some conditions of growth for potential functions  $V_{i}(x)(i=1,2)$, we construct the decay estimates and the H\"oder continuity of nonnegative solutions. Therefore, by the definition of fractional Laplacian operator, the the positivity of nonnegative solutions can be obtained.

Finally, compare to local terms $|u|^{p-2}u$ and $|v|^{q-2}v$, the nonlocal terms $(I_{\alpha}\star |u|^p)|u|^{p-2}u$, $(I_{\alpha}\star |v|^r)|u|^{r-2}u$, $ (I_{\alpha}\star |v|^q)|v|^{q-2}v$ and $(I_{\alpha}\star |v|^r)|u|^{r-2}u$ are more complicated. Therefore, we need some  careful analysis.

\section{Preliminaries}
In this section, we list some useful lemmas which will be used in later. For simplicity, denote that
\begin{align*}
A(u,v)=\int_{\mathbb{R}^N}(|(-\Delta)^{\frac{s}{2}}u|^2+|(-\Delta)^{\frac{s}{2}}v|^2)dx,
\end{align*}
and
$$
B(u,v,r)=\int_{\mathbb{R}^N}(I_{\alpha}\star|u|^r)|v|^r.
$$
First, we give some important inequalities.
\begin{lemma}(Fractional Gagliardo-Nirenberg inequality)
Let $s\in(0,1),~N>2s$ and $q\in(2,2^{*}_s)$. Then there exists a constant $C(N,s,q)>0$ such that
\begin{align*}%\label{fractional Gagliardo-Nirenberg inequality}
\|u\|_{q}\le C(N,s,q) \|(-\Delta)^{\frac{s}{2}}u\|_2^{\gamma_{q}}\|u\|_2^{1-\gamma_{q}}, ~\forall u\in H^{s}(\mathbb{R}^N),
\end{align*}
where $\gamma_q=\frac{N(q-2)}{2qs}$.
\end{lemma}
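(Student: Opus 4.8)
The plan is to obtain the inequality by combining the endpoint fractional Sobolev embedding with an elementary $L^p$-interpolation (H\"older) inequality, so that the exponent $\gamma_q$ falls out of a bookkeeping computation and no delicate analysis is required. The only nontrivial ingredient is the Sobolev inequality at the critical exponent, which is classical and which I would cite rather than reprove.

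First I would recall that, since $N>2s$, the homogeneous space $\dot H^s(\mathbb{R}^N)$ embeds continuously into $L^{2^*_s}(\mathbb{R}^N)$ with $2^*_s=\frac{2N}{N-2s}$; that is, there is a constant $S=S(N,s)>0$ such that $\|u\|_{2^*_s}\le S\,\|(-\Delta)^{\frac{s}{2}}u\|_2$ for every $u\in H^s(\mathbb{R}^N)$. (This can be derived from the Riesz-potential representation $u=(-\Delta)^{-s/2}\big((-\Delta)^{s/2}u\big)$ together with the Hardy--Littlewood--Sobolev inequality, but for the present purposes it suffices to invoke it.) In particular every $u\in H^s(\mathbb{R}^N)$ belongs to $L^t(\mathbb{R}^N)$ for all $t\in[2,2^*_s]$, so all quantities below are finite.

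Next, for $q\in(2,2^*_s)$ I would define $\theta\in(0,1)$ by the interpolation relation $\frac1q=\frac{1-\theta}{2}+\frac{\theta}{2^*_s}$ and apply the standard interpolation inequality $\|u\|_q\le\|u\|_2^{1-\theta}\,\|u\|_{2^*_s}^{\theta}$. Substituting the Sobolev bound for $\|u\|_{2^*_s}$ gives $\|u\|_q\le S^{\theta}\,\|(-\Delta)^{\frac{s}{2}}u\|_2^{\theta}\,\|u\|_2^{1-\theta}$, which is exactly the asserted estimate with $C(N,s,q):=S^{\theta}$. It remains to identify $\theta$: solving $\frac1q=\frac{1-\theta}{2}+\frac{\theta(N-2s)}{2N}$ yields $\frac{2N}{q}=N-2s\theta$, hence $\theta=\frac{N(q-2)}{2qs}=\gamma_q$, and the restriction $2<q<2^*_s$ ensures $0<\gamma_q<1$, so the interpolation step is legitimate. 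The only real obstacle is the fractional Sobolev embedding itself, which is where all the analytic content sits; everything else is arithmetic.
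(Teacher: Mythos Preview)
Your argument is correct: the fractional Sobolev embedding $\dot H^s\hookrightarrow L^{2^*_s}$ combined with the elementary $L^p$-interpolation $\|u\|_q\le\|u\|_2^{1-\theta}\|u\|_{2^*_s}^{\theta}$ yields the inequality with the right exponent, and your computation of $\theta=\gamma_q$ is accurate. Note that the paper does not actually prove this lemma; it simply records it as a known inequality in the preliminaries, so there is no approach of the paper's to compare against---your interpolation proof is the standard one and is perfectly adequate here.
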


\begin{lemma}\label{lhls}(Hardy-Littlewood-Sobolev inequality)
Let $t,r>1$ and $\alpha\in(0,N)$ with $\frac{1}{t}+\frac{1}{r}=1+\frac{\alpha}{N},f\in L^{t}(\mathbb{R}^N)$ and $h\in L^{r}(\mathbb{R}^N)$. Then there exists a sharp constant $C(N,\alpha, t,r)>0$ such that
\begin{align*}%\label{Hardy-Littewood-Sobolev inequality}
\bigg|\int_{\mathbb{R}^N}\int_{\mathbb{R}^N}\frac{|f(x)||h(y)|}{|x-y|^{N-\alpha}}dxdy\bigg|\le C(N,\alpha, t,r)\|f\|_{t}\|h\|_{r}.
\end{align*}
Moreover, if $t=r=\frac{2N}{N+\alpha}$, then
\begin{align*}
C(N,\alpha):=C(N,\alpha, t,r)=\pi^{\frac{N-\alpha}{2}}\frac{\Gamma(\frac{\alpha}{2})}{\Gamma(\frac{N+\alpha}{2})}\bigg\{\frac{\Gamma(\frac{N}{2})}{\Gamma(N)}\bigg\}^{-\frac{\alpha}{N}}.
\end{align*}
\end{lemma}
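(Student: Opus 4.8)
The statement bundles together two facts of very different character: the boundedness inequality with \emph{some} finite best constant, and the \emph{explicit value} of that constant in the symmetric case $t=r=\frac{2N}{N+\alpha}$. My plan is to treat them separately, since only the first admits a short self-contained argument. For the general exponents, ``sharp constant'' simply names the optimal (smallest) constant, whose finiteness is exactly the content of the inequality; the closed-form evaluation is asserted only along the diagonal.

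For the inequality I would first dualize. Writing $g=|f|$ and combining Fubini with H\"older's inequality,
\[
\iint_{\mathbb{R}^N\times\mathbb{R}^N}\frac{g(x)|h(y)|}{|x-y|^{N-\alpha}}\,dx\,dy=\int_{\mathbb{R}^N}|h(y)|\,(I_\alpha\star g)(y)\,dy\le \|h\|_r\,\|I_\alpha\star g\|_{r'},
\]
where $\tfrac1r+\tfrac1{r'}=1$. The hypothesis $\tfrac1t+\tfrac1r=1+\tfrac{\alpha}{N}$ is precisely equivalent to $\tfrac1{r'}=\tfrac1t-\tfrac{\alpha}{N}$, and forces $1<t<\tfrac{N}{\alpha}$ and $1<r'<\infty$. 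Thus the whole estimate reduces to the mapping property $\|I_\alpha\star g\|_{r'}\le C\|g\|_t$ of the Riesz potential.

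To prove this mapping property I would split the kernel at a radius $R=R(x)$ chosen pointwise: $(I_\alpha\star g)(x)=\int_{|x-y|<R}+\int_{|x-y|\ge R}=:J_1+J_2$. Decomposing $J_1$ over dyadic annuli and using that averages of $g$ over balls are controlled by the Hardy--Littlewood maximal function $Mg$ yields $|J_1(x)|\le C R^{\alpha}Mg(x)$, the geometric series converging because $\alpha>0$; H\"older's inequality on $J_2$, with $\int_{|z|\ge R}|z|^{-(N-\alpha)t'}\,dz$ finite precisely because $t<\tfrac{N}{\alpha}$, gives $|J_2(x)|\le C\|g\|_t R^{\alpha-N/t}$. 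Balancing the two contributions by the choice $R^{N/t}=\|g\|_t/Mg(x)$ produces the pointwise bound $|(I_\alpha\star g)(x)|\le C\,(Mg(x))^{t/r'}\|g\|_t^{\,1-t/r'}$. Raising to the power $r'$, integrating, and invoking the Hardy--Littlewood maximal theorem $\|Mg\|_t\le C\|g\|_t$ (valid for $t>1$) closes the estimate. This route gives a finite, though non-sharp, constant $C(N,\alpha,t,r)$ and is elementary modulo the maximal inequality.

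The genuinely hard part is the explicit sharp constant, and here I would reduce to, then invoke, Lieb's theorem. In the diagonal case $t=r=\tfrac{2N}{N+\alpha}$ the bilinear functional is symmetric in $f,h$, so by the Riesz rearrangement inequality one may restrict to radially symmetric decreasing nonnegative $f=h$ without decreasing the ratio. The decisive structural fact is the conformal invariance of the functional: through stereographic projection to $S^N$ the problem becomes invariant under the conformal group, and either Lieb's rearrangement analysis of the Euler--Lagrange equation or the Carlen--Loss competing-symmetries flow (alternating symmetric-decreasing rearrangement with conformal inversions) identifies the extremals as the translates and dilates of $(1+|x|^2)^{-\frac{N+\alpha}{2}}$. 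Substituting this extremal and evaluating the resulting Beta/Gamma integrals yields the stated value $C(N,\alpha)=\pi^{\frac{N-\alpha}{2}}\frac{\Gamma(\alpha/2)}{\Gamma((N+\alpha)/2)}\{\Gamma(N/2)/\Gamma(N)\}^{-\alpha/N}$. I expect this determination of the sharp constant to be the main obstacle: it rests on the full strength of rearrangement inequalities and conformal symmetry rather than on any soft functional-analytic device, so in practice I would cite Lieb's result rather than reproduce it.
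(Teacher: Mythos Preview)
Your argument is correct, but it goes well beyond what the paper actually does. In the paper this lemma is stated in the Preliminaries section as a known classical result without any proof or attribution; it is simply listed among the ``useful lemmas which will be used in later''. The paper's contribution begins with the system \eqref{system}, and the Hardy--Littlewood--Sobolev inequality is taken for granted as background.

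What you supply is a genuine self-contained sketch: the duality reduction to the weak-type estimate for $I_\alpha$, the Hedberg-type pointwise bound via dyadic decomposition and the maximal function, and then the invocation of Lieb's conformal/rearrangement analysis for the sharp diagonal constant. All of this is sound and standard, and your identification of the extremals $(1+|x|^2)^{-(N+\alpha)/2}$ and the Gamma-function evaluation is correct. The only caveat is the one you yourself flag: the sharp constant portion is not something you can reproduce in a few lines, and citing Lieb (or Lieb--Loss, \emph{Analysis}) is exactly what one does in practice. Since the paper itself does not even cite a source for this lemma, your treatment is strictly more informative than the original.
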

\iffalse
\begin{remark}
If we take $t=r=\frac{2N}{N+\alpha}$ and $f=h=|u|^{p}$, applying \eqref{fractional Gagliardo-Nirenberg inequality} and \eqref{Hardy-Littewood-Sobolev inequality}, then there holds
\begin{align}\label{bu}
B(u,u,p)\le C(N,\alpha)\|u\|_{\frac{2Np}{N+\alpha}}^{2p}\le C\|(-\Delta)^{\frac{s}{2}}u\|_2^{2p\delta_{p}}\|u\|_2^{2p(1-\delta_{p})},
\end{align}
where $\delta_{p}=\frac{N(p-1)-\alpha}{2ps}$.  Further, it is easy to find that $B(u,u,p)$ is well defined for $p\in[1+\frac{\alpha}{N},\frac{N+\alpha}{N-2s}]$.
Similarly, we have
\begin{align}\label{bv}
B(v,v,q)\le C(N,\alpha)\|v\|_{\frac{2Nq}{N+\alpha}}^{2q}\le C\|(-\Delta)^{\frac{s}{2}}v\|_2^{2q\delta_{q}}\|v\|_2^{2q(1-\delta_{q})},
\end{align}
and
\begin{align}\label{buv}
\begin{split}
B(u,v,r)&\le C(N,\alpha)\|u\|_{\frac{2Nr}{N+\alpha}}^{r}\|v\|_{\frac{2Nr}{N+\alpha}}^{r}\\
&\le C\|(-\Delta)^{\frac{s}{2}}u\|_2^{r\delta_{r}}\|(-\Delta)^{\frac{s}{2}}v\|_2^{r\delta_{r}}\|u\|_2^{r(1-\delta_{r})}\|v\|_2^{r(1-\delta_{r})}.
\end{split}
\end{align}
\end{remark}
\fi

In the following, we give the sharp Gagliardo-Nirenberg type inequality which can be seen in \cite{FZ18,S19}.
\begin{lemma}\label{lggn}
Let $1+\frac{\alpha}{N}<p<\frac{N+\alpha}{N-2s}$. Then the best constant in the generalized Gagliardo-Nirenberg inequality
\begin{align}\label{gGN}
B(u,u,p)\le C(N,p,s,\alpha)\|(-\Delta)^{\frac{s}{2}}u\|_2^{2p\delta_{p}}\|u\|_2^{2p(1-\delta_{p})}
\end{align}
is
$$
C(N,p,s,\alpha)=\frac{2sp}{2sp-Np+N+\alpha}\bigg(\frac{2sp-Np+N+\alpha}{N(p-1)-\alpha}\bigg)^{p\delta_{p}}\|Q\|_2^{2-2p},
$$
where $Q$ is the ground state solution of the elliptic equation
\begin{align}\label{Qequation}
(-\Delta)^{s}Q+Q-(I_{\alpha}\star|Q|^p)|Q|^{p-2}Q=0.
\end{align}
In particular, if $p=1+\frac{\alpha+2s}{N}$, then $C(N,p,s,\alpha)=p\|Q\|_2^{2-2p}$.
\end{lemma}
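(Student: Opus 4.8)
The plan is to identify the sharp constant with the reciprocal of the infimum of the Weinstein-type quotient
\[
\mathcal{W}(u)=\frac{\|(-\Delta)^{\frac{s}{2}}u\|_2^{2p\delta_p}\,\|u\|_2^{2p(1-\delta_p)}}{B(u,u,p)},\qquad u\in H^s(\mathbb{R}^N)\setminus\{0\},
\]
where $\delta_p=\frac{N(p-1)-\alpha}{2ps}\in(0,1)$ since $1+\frac{\alpha}{N}<p<\frac{N+\alpha}{N-2s}$, and then to evaluate that infimum at a ground state of \eqref{Qequation}. First I would observe that $\mathcal{W}$ is invariant under the two-parameter scaling $u\mapsto\mu\,u(\lambda\,\cdot)$ (the Dirichlet integral, $\|u\|_2^2$ and $B(u,u,p)$ are each homogeneous under this action and the exponents are arranged to cancel), and that $\inf\mathcal{W}>0$: combining Lemma \ref{lhls} with the conjugate pair $t=r=\frac{2N}{N+\alpha}$ and the fractional Gagliardo--Nirenberg inequality gives $B(u,u,p)\le C\|u\|_{2Np/(N+\alpha)}^{2p}\le C'\|(-\Delta)^{\frac{s}{2}}u\|_2^{2p\delta_p}\|u\|_2^{2p(1-\delta_p)}$, where the exponent $q:=\frac{2Np}{N+\alpha}$ lies in $(2,2^{*}_s)$ exactly under the stated range of $p$ and the Gagliardo--Nirenberg parameter $\gamma_q$ equals $\delta_p$.

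Next I would prove that $\inf\mathcal{W}$ is attained. Using the scaling freedom, restrict to $\|u\|_2=\|(-\Delta)^{\frac{s}{2}}u\|_2=1$ and take a minimizing sequence $(u_n)$. By the P\'olya--Szeg\H{o} inequality for $H^s$ (symmetric decreasing rearrangement does not increase $\|(-\Delta)^{\frac{s}{2}}\cdot\|_2$ and preserves $\|\cdot\|_2$) together with the Riesz rearrangement inequality (which does not decrease $B(\cdot,\cdot,p)$), one may assume every $u_n$ is radial and nonincreasing; then $(u_n)$ is bounded in $H^s_{\mathrm{rad}}(\mathbb{R}^N)$, so along a subsequence $u_n\rightharpoonup\bar u$ in $H^s$ and $u_n\to\bar u$ strongly in $L^q$ by the compact radial embedding. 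Since $B(u_n,u_n,p)=1/\mathcal{W}(u_n)\to 1/\inf\mathcal{W}>0$ and $u\mapsto B(u,u,p)$ is continuous on $L^q$ (via the Hardy--Littlewood--Sobolev inequality), $B(\bar u,\bar u,p)>0$, hence $\bar u\neq0$; weak lower semicontinuity of the numerator then forces $\mathcal{W}(\bar u)\le\inf\mathcal{W}$, so $\bar u$ is a minimizer (and in fact $u_n\to\bar u$ strongly in $H^s$).

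Then I would extract the Euler--Lagrange equation: a minimizer $\bar u$ is a free critical point of $\mathcal{W}$, hence solves $(-\Delta)^s\bar u+a\bar u=b(I_\alpha\star|\bar u|^p)|\bar u|^{p-2}\bar u$ for some $a,b>0$, and via the scaling $u\mapsto\mu\,u(\lambda\,\cdot)$ it can be rescaled into a function $Q$ solving exactly \eqref{Qequation}, so that scale-invariance of $\mathcal{W}$ gives $\inf\mathcal{W}=\mathcal{W}(Q)$. Testing \eqref{Qequation} against $Q$ yields the Nehari identity $\|(-\Delta)^{\frac{s}{2}}Q\|_2^2+\|Q\|_2^2=B(Q,Q,p)$, and evaluating $\tfrac{d}{dt}\big|_{t=1}$ on the dilations $Q(\cdot/t)$ (the three terms of the action scale like $t^{N-2s}$, $t^{N}$, $t^{N+\alpha}$) yields the Pohozaev identity $\frac{N-2s}{2}\|(-\Delta)^{\frac{s}{2}}Q\|_2^2+\frac{N}{2}\|Q\|_2^2=\frac{N+\alpha}{2p}B(Q,Q,p)$. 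Solving this linear system — using $2ps\delta_p=N(p-1)-\alpha$ and writing $\kappa:=2sp-Np+N+\alpha$, which is positive because $p<\frac{N+\alpha}{N-2s}$ — gives $\|(-\Delta)^{\frac{s}{2}}Q\|_2^2=\frac{2ps\delta_p}{\kappa}\|Q\|_2^2$ and $B(Q,Q,p)=\frac{2sp}{\kappa}\|Q\|_2^2$; substituting into $\mathcal{W}(Q)$ collapses the $\|Q\|_2$-powers to $\|Q\|_2^{2(p-1)}$ and gives $\inf\mathcal{W}=\frac{\kappa}{2sp}\big(\frac{2ps\delta_p}{\kappa}\big)^{p\delta_p}\|Q\|_2^{2(p-1)}$, i.e. the sharp constant $C(N,p,s,\alpha)=1/\inf\mathcal{W}$ is exactly as claimed. (The same two identities show that for every solution $\widetilde Q$ of \eqref{Qequation} one has $\mathcal{W}(\widetilde Q)=c_0\|\widetilde Q\|_2^{2(p-1)}$ and $J(\widetilde Q)=\frac{(p-1)s}{\kappa}\|\widetilde Q\|_2^2$, with $J$ the action of \eqref{Qequation}, so ``ground state'' coincides with ``minimizer of $\mathcal{W}$ among solutions'' and $\|Q\|_2$ is the common $L^2$-norm of all ground states, which makes the statement well posed.) Finally, the $L^2$-critical case $p=1+\frac{\alpha+2s}{N}$ is obtained by substituting $N(p-1)-\alpha=2s$, whence $\delta_p=\tfrac1p$, $p\delta_p=1$, $\kappa=2s(p-1)$, and the bracket equals $p-1$, giving $C(N,p,s,\alpha)=p\|Q\|_2^{2-2p}$.

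The step I expect to be the main obstacle is showing the infimum is attained — in particular, justifying the rearrangement reduction (the fractional P\'olya--Szeg\H{o} inequality and the monotonicity of the nonlocal term under symmetrization) and the compactness via the radial embedding, together with the identification of the rescaled minimizer as a ground state of \eqref{Qequation}; by contrast, the homogeneity bookkeeping and the Nehari/Pohozaev computation are routine once the minimizer is in hand.
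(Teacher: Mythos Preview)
The paper does not actually prove this lemma: it is stated with the preface ``we give the sharp Gagliardo--Nirenberg type inequality which can be seen in \cite{FZ18,S19}'' and no argument is given. Your proposal supplies exactly the Weinstein-type variational proof that those references use --- scale-invariant quotient, attainment via rearrangement and radial compactness, Euler--Lagrange plus rescaling to \eqref{Qequation}, and the Nehari/Pohozaev linear system to extract the constant --- and the algebra checks out (in particular $\kappa=2ps(1-\delta_p)$, $\|(-\Delta)^{s/2}Q\|_2^2=\tfrac{2ps\delta_p}{\kappa}\|Q\|_2^2$, $B(Q,Q,p)=\tfrac{2sp}{\kappa}\|Q\|_2^2$, and the $L^2$-critical reduction), so your approach is correct and coincides with what the cited sources do.
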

\begin{remark}\label{rpi}
If $p=1+\frac{\alpha+2s}{N}$, then
\begin{align}\label{bgn}
B(u,u,1+\frac{\alpha+2s}{N})\le \frac{1+\frac{\alpha+2s}{N}}{\|Q\|_2^{\frac{2(\alpha+2s)}{N}}}\|(-\Delta)^{\frac{s}{2}}u\|_2^{2}\|u\|_2^{\frac{2(\alpha+2s)}{N}}.
\end{align}
By \eqref{Qequation}, we have
\begin{align}\label{l2bq}
\begin{split}
&\|(-\Delta)^{\frac{s}{2}}Q\|_2^{2}=\frac{N}{N+\alpha+2s}B(Q,Q,1+\frac{\alpha+2s}{N}),\\
&\|Q\|_2^2=\frac{\alpha+2s}{N+\alpha+2s}B(Q,Q,1+\frac{\alpha+2s}{N}).
\end{split}
\end{align}
\end{remark}

\begin{lemma}(Weak Young inequality \cite{LL01}) Let $N\in\mathbb{N},\alpha\in(0,N),p,r>1$ and $\frac{1}{p}=\frac{\alpha}{N}+\frac{1}{r}$. If $v\in L^{p}(\mathbb{R}^N)$, then $I_{\alpha}\star v\in L^{r}(\mathbb{R}^N)$
 and
$$
 \bigg(\int_{\mathbb{R}^N}|I_{\alpha}\star v|^{r}\bigg)^{\frac{1}{r}}\le C(N,\alpha,p)\bigg(\int_{\mathbb{R}^N}|v|^{p}\bigg)^{\frac{1}{p}}.
$$
In particular, we can set $p=\frac{N}{\alpha}$ and $r=+\infty$.
\end{lemma}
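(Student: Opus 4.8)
The plan is to read this inequality off from the Hardy--Littlewood--Sobolev inequality already recorded in Lemma \ref{lhls}, via duality; this is the shortest route, because the exponent relation in the hypothesis is precisely the HLS admissibility condition in disguise. Write $r'=\frac{r}{r-1}$ for the Hölder conjugate of $r$. The assumption $\frac1p=\frac{\alpha}{N}+\frac1r$ rearranges to
\[
\frac1p+\frac1{r'}=\frac1p+1-\frac1r=1+\frac{\alpha}{N},
\]
which is exactly the condition $\frac1t+\frac1{(\cdot)}=1+\frac{\alpha}{N}$ appearing in Lemma \ref{lhls}, with the first HLS exponent $t=p$ and the second equal to $r'$. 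Since $1<p<\infty$ forces $1<r'<\infty$, the pair $(p,r')$ is admissible for HLS.

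Next I would use the duality characterization of the $L^r$ norm,
\[
\|I_\alpha\star v\|_r=\sup\Big\{\int_{\mathbb{R}^N}(I_\alpha\star v)(x)\,h(x)\,dx:\ h\in L^{r'}(\mathbb{R}^N),\ \|h\|_{r'}\le1\Big\}.
\]
For a fixed such $h$, Tonelli's theorem applied to $|v|,|h|\ge0$ together with Lemma \ref{lhls} shows that the double integral $\int_{\mathbb{R}^N}\int_{\mathbb{R}^N}|x-y|^{\alpha-N}|v(y)||h(x)|\,dx\,dy$ is finite and bounded by $C(N,\alpha,p)\|v\|_p\|h\|_{r'}$; hence Fubini applies to the signed integrand and
\[
\int_{\mathbb{R}^N}(I_\alpha\star v)(x)\,h(x)\,dx=\int_{\mathbb{R}^N}\int_{\mathbb{R}^N}\frac{v(y)\,h(x)}{|x-y|^{N-\alpha}}\,dx\,dy\le C(N,\alpha,p)\,\|v\|_p\,\|h\|_{r'}.
\]
Taking the supremum over $\|h\|_{r'}\le1$ yields simultaneously that $I_\alpha\star v\in L^r(\mathbb{R}^N)$ and the stated estimate, with the constant inherited from the sharp HLS constant of Lemma \ref{lhls}.

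A modest amount of care makes the duality rigorous: I would first prove the bound for $v\ge0$, where the double integral is an unambiguous element of $[0,\infty]$ and Tonelli guarantees that $(I_\alpha\star v)(x)$ is finite for a.e.\ $x$, so that $I_\alpha\star v$ is well defined a.e.; the general case then follows from the pointwise domination $|I_\alpha\star v|\le I_\alpha\star|v|$. The genuinely delicate point, and the main obstacle, is the borderline case $p=\frac{N}{\alpha}$, $r=+\infty$ singled out in the statement: there $r'=1$ lies outside the admissible range of Lemma \ref{lhls} and the duality argument collapses. For this endpoint I would argue directly, splitting $(I_\alpha\star v)(x)=\int_{|z|<R}|z|^{\alpha-N}v(x-z)\,dz+\int_{|z|\ge R}|z|^{\alpha-N}v(x-z)\,dz$, estimating the near part by $C R^{\alpha}$ times the Hardy--Littlewood maximal function of $v$ and the far part by $\|v\|_p\big(\int_{|z|\ge R}|z|^{(\alpha-N)p'}\,dz\big)^{1/p'}$, then optimizing in $R$ (Hedberg's inequality). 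This is exactly the fragile step: the far-field tail is only borderline convergent as $p\to\frac{N}{\alpha}$, so the endpoint must be treated with extra hypotheses or understood in the limiting sense, and it is the part of the argument demanding the most attention.
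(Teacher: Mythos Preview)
The paper does not prove this lemma at all; it is simply quoted from Lieb--Loss \cite{LL01} as a known result. So there is no ``paper's own proof'' to compare against, and your duality derivation from the Hardy--Littlewood--Sobolev inequality (Lemma \ref{lhls}) is a perfectly standard and correct route for the non-endpoint range $1<p<r<\infty$.

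Your caution about the endpoint $p=\tfrac{N}{\alpha}$, $r=+\infty$ is entirely justified and in fact identifies a genuine issue with the \emph{statement} as written: the Riesz potential does \emph{not} map $L^{N/\alpha}(\mathbb{R}^N)$ boundedly into $L^{\infty}(\mathbb{R}^N)$ in general (the correct endpoint substitutes are BMO on the target side, or the Lorentz space $L^{N/\alpha,1}$ on the source side). Your Hedberg-type splitting makes this transparent, since the far-field integral $\int_{|z|\ge R}|z|^{(\alpha-N)p'}\,dz$ diverges exactly when $p'=\tfrac{N}{N-\alpha}$, i.e.\ at $p=\tfrac{N}{\alpha}$. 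So the ``in particular'' clause is not literally correct without extra hypotheses, and you are right to flag it rather than try to force an argument through. In the paper's subsequent applications the exponents used stay strictly inside the admissible range, so this endpoint defect does not propagate.
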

\begin{lemma}\label{lIa}
If
$$
\frac{N(2p-1)}{N+\alpha}\le q<\frac{Np}{\alpha},
$$
and $u\in L^q(\mathbb{R}^N)$, then
$$
(I_{\alpha}\star |u|^p)|u|^{p-2}u\in L^r(\mathbb{R}^N)~\text{for~}\frac{1}{r}=\frac{2p-1}{q}-\frac{\alpha}{N}.
$$
\end{lemma}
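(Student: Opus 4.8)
The plan is to split the nonlocal term as $(I_\alpha\star|u|^p)\,|u|^{p-2}u$ and to estimate the two factors separately: the convolution factor $I_\alpha\star|u|^p$ will be placed in a suitable Lebesgue space by the weak Young inequality stated above, and the pointwise factor $|u|^{p-2}u$ (whose modulus is simply $|u|^{p-1}$, so signs play no role) will be controlled by the hypothesis $u\in L^q$. The two bounds on $q$ are used for opposite purposes: the upper bound $q<\frac{Np}{\alpha}$ is precisely what makes $I_\alpha\star|u|^p$ belong to an honest $L^{\tilde r}$ space with $\tilde r<\infty$, while the lower bound $q\ge\frac{N(2p-1)}{N+\alpha}$ is precisely what forces $r\ge1$, so that the target space $L^r(\mathbb R^N)$ makes sense.

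Concretely, I would first note that $u\in L^q(\mathbb R^N)$ yields $|u|^p\in L^{q/p}(\mathbb R^N)$ and $|u|^{p-1}\in L^{q/(p-1)}(\mathbb R^N)$. Applying the weak Young inequality to $v=|u|^p$ with exponent $q/p$, and using that $q<\frac{Np}{\alpha}$ gives $\frac pq-\frac\alpha N>0$, I obtain $I_\alpha\star|u|^p\in L^{\tilde r}(\mathbb R^N)$ with $\frac1{\tilde r}=\frac pq-\frac\alpha N$ (the borderline $q=\frac{Np}{\alpha}$ corresponding to $\tilde r=+\infty$, the ``in particular'' case of that lemma). Then Hölder's inequality applied to the product $(I_\alpha\star|u|^p)\cdot|u|^{p-1}$ with the exponent pair $\tilde r$ and $q/(p-1)$ gives
$$
\big\|(I_\alpha\star|u|^p)|u|^{p-2}u\big\|_r\le\big\|I_\alpha\star|u|^p\big\|_{\tilde r}\,\big\||u|^{p-1}\big\|_{q/(p-1)}<\infty,\qquad \frac1r=\frac1{\tilde r}+\frac{p-1}q=\frac{2p-1}q-\frac\alpha N,
$$
which is exactly the claimed exponent. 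Finally, $q\ge\frac{N(2p-1)}{N+\alpha}$ is equivalent to $\frac{2p-1}q\le\frac{N+\alpha}N$, i.e. $\frac1r\le1$, so $r\in[1,\infty)$ is admissible and the conclusion follows.

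The step I expect to require the most care is checking that every conjugacy relation above involves exponents in the range where the weak Young and Hölder inequalities actually apply --- that is, that $q/p$ and $q/(p-1)$ lie in $(1,\infty]$ and $\tilde r\in[1,\infty]$ --- since this is exactly where the precise ranges $1+\frac\alpha N<p<\frac{N+\alpha}{N-2s}$ and $\frac{N(2p-1)}{N+\alpha}\le q<\frac{Np}{\alpha}$ enter; the rest is bookkeeping. An equivalent route, which avoids the weak Young step, is to argue by duality from the Hardy--Littlewood--Sobolev inequality (Lemma \ref{lhls}): test $(I_\alpha\star|u|^p)|u|^{p-1}$ against $\phi\in L^{r'}$ with $\|\phi\|_{r'}\le1$, expand the resulting double integral, and apply Lemma \ref{lhls} with $f=|u|^p\in L^{q/p}$ and $h=|u|^{p-1}\phi$; the HLS conjugacy condition, together with Hölder for $h$, forces $\frac1r=1-\frac1{r'}=\frac{2p-1}q-\frac\alpha N$, reproducing the same identity.
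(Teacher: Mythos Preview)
The paper does not give a proof of this lemma; it is stated as a preliminary fact immediately after the weak Young inequality, which strongly suggests that the intended argument is precisely the one you wrote: apply weak Young to $|u|^p\in L^{q/p}$ to put $I_\alpha\star|u|^p$ into $L^{\tilde r}$ with $\frac1{\tilde r}=\frac pq-\frac\alpha N$, then use H\"older with $|u|^{p-1}\in L^{q/(p-1)}$. Your bookkeeping on the exponents is correct, and your identification of the roles of the two bounds on $q$ (the upper one giving $\tilde r<\infty$, the lower one giving $r\ge1$) is accurate; the duality/HLS variant you mention is an equivalent packaging of the same estimate.
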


\begin{lemma}\label{lbw}\cite{Lx20}
Assume that $N>2s,~\alpha\in(0,N)$ and $r\in[1+\frac{\alpha}{N},\frac{N+\alpha}{N-2s}]$. Suppose that $\{u_n\}_{n=1}^{\infty}\subset H^{s}(\mathbb{R}^N)$ satisfy $u_n\rightharpoonup u$  weakly in $H^{s}(\mathbb{R}^N)$ as $n\rightarrow\infty$. Then
\begin{align*}
\int_{\mathbb{R}^N}(I_{\alpha}\star|u_n|^p)|u_n|^{p-2}u_n\varphi\rightarrow\int_{\mathbb{R}^N}(I_{\alpha}\star|u|^p)|u|^{p-2}u\varphi\text{~in }H^{-s}(\mathbb{R}^N)\text{~as~}n\rightarrow\infty,
\end{align*}
for any $\varphi\in H^{s}(\mathbb{R}^N)$.
\end{lemma}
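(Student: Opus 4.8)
The plan is to read the conclusion as sequential weak-to-weak-$*$ continuity of the Choquard nonlinearity: for each fixed $\varphi\in H^{s}(\mathbb{R}^N)$ one must show
\[
\int_{\mathbb{R}^N}(I_{\alpha}\star|u_n|^{p})|u_n|^{p-2}u_n\,\varphi\ \longrightarrow\ \int_{\mathbb{R}^N}(I_{\alpha}\star|u|^{p})|u|^{p-2}u\,\varphi ,
\]
i.e.\ $T_n\rightharpoonup T$ in $H^{-s}$, where $T_n$, $T$ are the functionals induced by $f_n:=(I_{\alpha}\star|u_n|^{p})|u_n|^{p-2}u_n$ and $f:=(I_{\alpha}\star|u|^{p})|u|^{p-2}u$ (strong convergence in $H^{-s}$ is false in general — translate a fixed bump to infinity). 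First I would establish the uniform bound $\sup_n\|T_n\|_{H^{-s}}<\infty$: since $u_n\rightharpoonup u$ in $H^{s}$ the sequence is bounded in $L^{t}(\mathbb{R}^N)$ for every $t\in[2,\frac{2N}{N-2s}]$, and because $1+\frac{\alpha}{N}\le p\le\frac{N+\alpha}{N-2s}$ one has $\frac{2Np}{N+\alpha}\in[2,\frac{2N}{N-2s}]$, so $\||u_n|^{p}\|_{2N/(N+\alpha)}$ is bounded; combining Lemma~\ref{lhls} with H\"older (picking for $\varphi$ an intermediate exponent in $[2,\frac{2N}{N-2s}]$, which is possible precisely on the stated $p$-range) yields $|T_n(\varphi)|\le C\|u_n\|_{H^{s}}^{2p-1}\|\varphi\|_{H^{s}}$. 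Consequently it suffices to prove $T_n(\varphi)\to T(\varphi)$ for $\varphi$ in the dense subspace $C_c^{\infty}(\mathbb{R}^N)$.

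Next I would exploit compactness at the local scale. By the fractional Rellich--Kondrachov theorem, along a subsequence (not relabelled) $u_n\to u$ in $L^{t}_{\mathrm{loc}}(\mathbb{R}^N)$ for all $t<\frac{2N}{N-2s}$ and a.e.\ in $\mathbb{R}^N$; since the final step argues that every subsequence has a further subsequence producing the same limit, this extraction is harmless. Fix $\varphi\in C_c^{\infty}$ with $\mathrm{supp}\,\varphi\subset B_{R_0}$ and for $R\ge 2R_0$ split $I_{\alpha}\star|u_n|^{p}$ into the contributions of $|u_n|^{p}\chi_{B_R}$ and of $|u_n|^{p}\chi_{B_R^{c}}$. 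For $x\in B_{R_0}$ and $|y|\ge R$ one has $|x-y|\ge|y|/2$, so H\"older against $|y|^{\alpha-N}$ with the conjugate pair $\bigl(\tfrac{2N}{N-\alpha},\tfrac{2N}{N+\alpha}\bigr)$ bounds the ``far'' potential in $L^{\infty}(B_{R_0})$ by $C R^{-(N-\alpha)/2}\||u_n|^{p}\|_{2N/(N+\alpha)}$, uniformly in $n$; together with the uniform bound on $\int_{B_{R_0}}|u_n|^{p-1}|\varphi|$ (legitimate as $p-1<\frac{2N}{N-2s}$) this makes the far part of both $T_n(\varphi)$ and $T(\varphi)$ smaller than any prescribed $\varepsilon/3$ once $R$ is large.

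With $R$ now fixed, the ``near'' part is treated by genuine strong convergence: $|u_n|^{p}\chi_{B_R}\to|u|^{p}\chi_{B_R}$ in $L^{2N/(N+\alpha)}(\mathbb{R}^N)$ (continuity of $t\mapsto|t|^{p}$ plus the strong $L^{2Np/(N+\alpha)}_{\mathrm{loc}}$ convergence, valid since $\frac{2Np}{N+\alpha}<\frac{2N}{N-2s}$ away from the endpoint), hence $I_{\alpha}\star(|u_n|^{p}\chi_{B_R})\to I_{\alpha}\star(|u|^{p}\chi_{B_R})$ in $L^{2N/(N-\alpha)}$ by boundedness of the Riesz potential; simultaneously $|u_n|^{p-2}u_n\varphi\to|u|^{p-2}u\varphi$ in $L^{2N/(N+\alpha)}(B_{R_0})$, using $\bigl||a|^{p-2}a-|b|^{p-2}b\bigr|\le C(|a|+|b|)^{p-2}|a-b|$ when $p\ge2$ and $\le C|a-b|^{p-1}$ when $1<p<2$, the exponents closing up inside the admissible range. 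Pairing the two convergences (one factor each) handles the near part, and an $\varepsilon/3$ decomposition gives $T_n(\varphi)\to T(\varphi)$ along the subsequence; uniqueness of the limit upgrades this to the full sequence, and the density reduction of the first step extends it to all $\varphi\in H^{s}$.

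The step I expect to be the genuine obstacle is the nonlocality. Because $I_{\alpha}\star|u_n|^{p}$ depends on the whole mass of $u_n$, one cannot localise directly, which forces the truncation/tail estimate above and, more delicately, requires that all H\"older / Hardy--Littlewood--Sobolev exponents stay admissible simultaneously over the \emph{entire} range $p\in[1+\frac{\alpha}{N},\frac{N+\alpha}{N-2s}]$ — in particular when $p<2$, where the nonlinearity is only $(p-1)$-H\"older, and at the upper critical endpoint $p=\frac{N+\alpha}{N-2s}$, where the local $L^{t}$-compactness degenerates and the convergence of the near part must be recovered from a.e.\ convergence together with a Brezis--Lieb type argument for the Choquard term rather than from strong $L^{t}_{\mathrm{loc}}$ convergence.
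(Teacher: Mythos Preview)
The paper does not give its own proof of this lemma; it is quoted from \cite{Lx20} and used as a black box, so there is no in-paper argument to compare your proposal against. Your interpretation of the conclusion as sequential weak-$*$ (rather than norm) convergence in $H^{-s}$ is the correct one --- as you note, translated bumps rule out strong $H^{-s}$ convergence --- and your overall strategy (uniform $H^{-s}$ bound via Lemma~\ref{lhls}, density reduction to $C_c^{\infty}$, near/far splitting of the Riesz potential, local Rellich--Kondrachov compactness for the near piece) is a standard and essentially sound route for the subcritical range $p<\frac{N+\alpha}{N-2s}$. Your diagnosis of the two genuinely delicate spots is also accurate: for $p<2$ the map $t\mapsto|t|^{p-2}t$ is only $(p-1)$-H\"older and the exponent bookkeeping must be redone accordingly, and at the upper endpoint one has $\frac{2Np}{N+\alpha}=\frac{2N}{N-2s}$, so strong local $L^t$-compactness is lost and the near part must be recovered from a.e.\ convergence combined with a Vitali or Br\'ezis--Lieb type argument rather than from strong $L^t_{\mathrm{loc}}$ convergence. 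These are precisely the places where a complete proof requires extra care, and you have located them correctly.
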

\begin{remark}
Under the assumptions of Lemma \ref{lbw}, if $u_n\rightharpoonup u$ and $v_n\rightharpoonup v$ weakly in $H^{s}(\mathbb{R}^N)$, then
\begin{align*}
&\int_{\mathbb{R}^N}(I_{\alpha}\star|u_n|^p)|v_n|^{p-2}v_n\varphi\rightarrow\int_{\mathbb{R}^N}(I_{\alpha}\star|u|^p)|v|^{p-2}v\varphi\text{~in }H^{-s}(\mathbb{R}^N),\\
&\int_{\mathbb{R}^N}(I_{\alpha}\star|v_n|^p)|u_n|^{p-2}u_n\varphi\rightarrow\int_{\mathbb{R}^N}(I_{\alpha}\star|v|^p)|u|^{p-2}u\varphi\text{~in }H^{-s}(\mathbb{R}^N),
\end{align*}
as $n\rightarrow\infty$, for any $\varphi\in H^{s}(\mathbb{R}^N)$.
\end{remark}

Next we give Br\'ezis-Lieb lemma for the nonlocal term of the functional, which can be seen in \cite{MV13}.
\begin{lemma}\label{lbl}
Let $N\in\mathbb{N},\alpha\in(0,N),p\in[1,\frac{2N}{N+\alpha})$ and $\{u_n\}$ be a bounded sequence in $L^{\frac{2Np}{N+\alpha}}$. If $u_n\to u$ a.e. in $\mathbb{R}^N$ as $n\to\infty$, then
$$
\lim_{n\to\infty}\int_{\mathbb{R}^N}(I_{\alpha}\star|u_n|^{p})|u_n|^{p}-\int_{\mathbb{R}^N}(I_{\alpha}\star|u_n-u|^{p})|u_n-u|^{p}
=\int_{\mathbb{R}^N}(I_{\alpha}\star|u|^{p})|u|^{p}.$$
\end{lemma}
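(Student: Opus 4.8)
The plan is to reduce the statement to the classical Brézis-Lieb lemma for powers, combined with the bilinearity of the Riesz convolution and the Hardy-Littlewood-Sobolev inequality of Lemma \ref{lhls}. Throughout set $t=\frac{2N}{N+\alpha}$, so that $\frac2t=1+\frac{\alpha}{N}$ and, since $\alpha\in(0,N)$, one has $t\in(1,\frac{N}{\alpha})$. For nonnegative $f,g\in L^t(\mathbb{R}^N)$ write $\langle f,g\rangle=\int_{\mathbb{R}^N}\int_{\mathbb{R}^N}\frac{f(x)g(y)}{|x-y|^{N-\alpha}}\,dx\,dy$, so that $B(w,w,p)=\langle|w|^p,|w|^p\rangle$, and the diagonal case of Lemma \ref{lhls} gives $|\langle f,g\rangle|\le C(N,\alpha)\|f\|_t\|g\|_t$; in particular $\langle\cdot,\cdot\rangle$ is continuous on $L^t\times L^t$. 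Since $\{u_n\}$ is bounded in $L^{pt}=L^{\frac{2Np}{N+\alpha}}$ and $u_n\to u$ a.e., Fatou's lemma gives $u\in L^{pt}$, and the sequences $f_n:=|u_n|^p$, $h_n:=|u_n-u|^p$ and the limit $f:=|u|^p$ are all bounded in $L^t$.

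The first main step is a Brézis-Lieb splitting for the power nonlinearity: I claim that $\phi_n:=f_n-h_n-f$ satisfies $\|\phi_n\|_{L^t}\to0$. To prove it I would start from the elementary inequality, valid for every $p\ge1$ and $\epsilon>0$ with $C_\epsilon$ depending only on $p,\epsilon$,
$$
\big|\,|a+b|^p-|b|^p-|a|^p\,\big|\le \epsilon|b|^p+C_\epsilon|a|^p,
$$
obtained from the mean value theorem and Young's inequality. Taking $a=u$ and $b=u_n-u$ yields the pointwise bound $|\phi_n|\le\epsilon|u_n-u|^p+C_\epsilon|u|^p$, hence, using $t>1$, $|\phi_n|^t\le 2^{t-1}\epsilon^t|u_n-u|^{pt}+2^{t-1}C_\epsilon^t|u|^{pt}$. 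The truncated functions $W_{n,\epsilon}:=\big(|\phi_n|^t-2^{t-1}\epsilon^t|u_n-u|^{pt}\big)^+$ then satisfy $0\le W_{n,\epsilon}\le 2^{t-1}C_\epsilon^t|u|^{pt}\in L^1$, while $W_{n,\epsilon}\le|\phi_n|^t\to0$ a.e. (because $u_n\to u$ a.e. forces $\phi_n\to0$ a.e.), so dominated convergence by the fixed $L^1$ majorant gives $\int W_{n,\epsilon}\to0$. Combining this with $\int|\phi_n|^t\le\int W_{n,\epsilon}+2^{t-1}\epsilon^t\|u_n-u\|_{pt}^{pt}$ and the uniform bound on $\|u_n-u\|_{pt}$, then letting $\epsilon\to0$, proves the claim.

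The second step exploits bilinearity. Writing $g_n:=f_n-h_n$, Step~1 says $g_n\to f$ in $L^t$. Expanding
$$
B(u_n,u_n,p)-B(u_n-u,u_n-u,p)=\langle f_n,f_n\rangle-\langle h_n,h_n\rangle=2\langle h_n,g_n\rangle+\langle g_n,g_n\rangle,
$$
I treat the two terms separately. Continuity of $\langle\cdot,\cdot\rangle$ on $L^t\times L^t$ together with $g_n\to f$ gives $\langle g_n,g_n\rangle\to\langle f,f\rangle=B(u,u,p)$. For the cross term I split $\langle h_n,g_n\rangle=\langle h_n,f\rangle+\langle h_n,g_n-f\rangle$; the second piece is bounded by $C\|h_n\|_t\|g_n-f\|_t\to0$, while $\langle h_n,f\rangle=\int_{\mathbb{R}^N}h_n\,(I_\alpha\star f)$ tends to $0$ because $h_n\rightharpoonup0$ weakly in $L^t$ (a bounded sequence in $L^t$, $t\in(1,\infty)$, converging a.e. to $0$ converges weakly to $0$) and $I_\alpha\star f\in L^{t'}=(L^t)'$. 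Assembling the three limits yields exactly $B(u_n,u_n,p)-B(u_n-u,u_n-u,p)\to B(u,u,p)$.

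The decisive point, and where the specific exponent enters, is precisely this last integrability: by the mapping property of the Riesz potential, $I_\alpha\star f\in L^{t^*}$ with $\frac{1}{t^*}=\frac1t-\frac{\alpha}{N}$, and the choice $t=\frac{2N}{N+\alpha}$ is exactly what forces $\frac{1}{t^*}=1-\frac1t$, i.e. $t^*=t'$, so that $I_\alpha\star f$ lies in the dual of the space in which $h_n$ converges weakly. I expect this matching of exponents, together with the passage in Step~1 from a.e. Brézis-Lieb convergence to genuine $L^t$-convergence, to be the only delicate points; the remainder is bookkeeping with the Hardy-Littlewood-Sobolev bound of Lemma \ref{lhls}.
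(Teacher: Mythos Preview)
Your proof is correct. The paper does not actually prove this lemma; it merely quotes the result from Moroz--Van Schaftingen \cite{MV13}, so there is no ``paper's own proof'' to compare against. Your argument is precisely the standard one from that reference: first upgrade the pointwise Br\'ezis--Lieb splitting to the strong $L^{t}$-convergence $|u_n|^{p}-|u_n-u|^{p}\to |u|^{p}$ with $t=\tfrac{2N}{N+\alpha}$, then expand the bilinear Riesz form and control the remaining cross term $\langle |u_n-u|^{p},\,|u|^{p}\rangle$ via the weak convergence $|u_n-u|^{p}\rightharpoonup 0$ in $L^{t}$ paired with $I_{\alpha}\star|u|^{p}\in L^{t'}$, which is exactly the self-dual exponent matching you highlight. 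All the steps (the elementary inequality for $p\ge 1$, the dominated-convergence argument for $W_{n,\epsilon}$, and the identification $t^{*}=t'$) are sound; the hypothesis $p<\tfrac{2N}{N+\alpha}$ is not actually used in your argument and is only there in the paper to match the applications.
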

\begin{remark}
Under the assumptions of Lemma \ref{lbl}, if $u_n\to u$ and $v_n\to v$ a.e. in $\mathbb{R}^N$ as $n\to\infty$, then
$$
\lim_{n\to\infty}\int_{\mathbb{R}^N}(I_{\alpha}\star|u_n|^{p})|v_n|^{p}-\int_{\mathbb{R}^N}(I_{\alpha}\star|u_n-u|^{p})|v_n-v|^{p}
=\int_{\mathbb{R}^N}(I_{\alpha}\star|u|^{p})|v|^{p}.$$
\end{remark}

\begin{lemma}\cite{LL01}
Let $f,g$ and $h$ be three Lebesgue measurable non-negative functions on $\mathbb{R}^N$. Then, with
$$
\Psi(f,g,h)=\int_{\mathbb{R}^N}\int_{\mathbb{R}^N}f(x)g(x-y)h(y)dxdy,
$$
we have
$$
\Psi(f,g,h)\le\Psi(f^{*},g^{*},h^{*}),
$$
where $f^{*}$ is the Schwartz rearrangement of $f$.
\end{lemma}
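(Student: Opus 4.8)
The plan is the classical route of Riesz: reduce the trilinear functional to indicator functions by the layer-cake principle, settle the one-dimensional case by an elementary compression, and then climb to arbitrary dimension by Steiner symmetrization. Since the statement is quoted from \cite{LL01}, I only describe the structure. First I would use the layer-cake representation $f(x)=\int_0^\infty\chi_{\{f>t\}}(x)\,dt$ (and likewise for $g,h$); because the symmetric-decreasing rearrangement commutes with superlevel sets, $\chi_{\{f>t\}}^{*}=\chi_{\{f^{*}>t\}}$ with $\{f>t\}^{*}=\{f^{*}>t\}$. Expanding $\Psi$ by Tonelli (all integrands are nonnegative) gives
\begin{align*}
\Psi(f,g,h)=\int_0^\infty\!\!\int_0^\infty\!\!\int_0^\infty\Psi\big(\chi_{\{f>t_1\}},\chi_{\{g>t_2\}},\chi_{\{h>t_3\}}\big)\,dt_1\,dt_2\,dt_3,
\end{align*}
and the analogous identity for $f^{*},g^{*},h^{*}$. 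Hence it suffices to prove $\Psi(\chi_A,\chi_B,\chi_C)\le\Psi(\chi_{A^{*}},\chi_{B^{*}},\chi_{C^{*}})$ for measurable sets $A,B,C$, and by monotone convergence I may assume each has finite measure, so that $A^{*},B^{*},C^{*}$ are centered balls.

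Next I would prove the base case $N=1$. By inner regularity of Lebesgue measure I approximate sets of finite measure from within by finite unions of disjoint open intervals and pass to the limit, so it is enough to treat such finite unions. For these I would run a measure-preserving flow that slides the component intervals toward the origin until they form the rearranged configuration. The elementary fact driving everything is that, among all translates, the overlap of two intervals is largest when they are symmetrically centered; this makes $t\mapsto\Psi$ nondecreasing along the flow and yields the inequality on $\mathbb{R}$.

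Then I would ascend in dimension by Steiner symmetrization. For a unit vector $e$, let $S_e$ symmetrize a set along every line parallel to $e$; it preserves Lebesgue measure, and by slicing along those lines with Tonelli and applying the one-dimensional inequality on each slice one obtains $\Psi(\chi_A,\chi_B,\chi_C)\le\Psi(\chi_{S_eA},\chi_{S_eB},\chi_{S_eC})$. Finally I would invoke the classical fact that, for a suitable sequence of directions, the iterated Steiner symmetrizations of a bounded set converge in measure (equivalently, the indicators converge in $L^2$) to the ball of equal measure; since $\Psi$ is continuous under $L^2$-convergence of uniformly bounded, compactly supported indicators, passing to the limit gives $\Psi(\chi_A,\chi_B,\chi_C)\le\Psi(\chi_{A^{*}},\chi_{B^{*}},\chi_{C^{*}})$. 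Undoing the layer-cake decomposition then delivers the inequality for general $f,g,h$.

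The hard part will be the convergence of iterated Steiner symmetrizations to the symmetric-decreasing rearrangement: the directions must be chosen carefully, general (possibly unbounded) sets must be reduced to bounded ones by truncation, and the limit exchange inside $\Psi$ must be justified; the degenerate situation $\Psi(f^{*},g^{*},h^{*})=+\infty$ is absorbed by the finite-measure reduction. The one-dimensional compression, though elementary, likewise requires care in passing from finite unions of intervals to arbitrary measurable sets.
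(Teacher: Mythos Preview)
The paper does not give its own proof of this lemma; it is stated as a citation from \cite{LL01} (Lieb--Loss) with no argument provided. Your outline is precisely the classical Riesz--Sobolev route (layer-cake reduction to indicators, one-dimensional case, iterated Steiner symmetrization and passage to the limit) that appears in the cited reference, so your proposal is correct and consistent with what the paper invokes.
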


\begin{lemma}\cite{DSS15}\label{lpohozaev}
Let $N\ge 3,s\in(0,1),\alpha\in(0,N),p\in(\max\{2,1+\frac{\alpha}{N}\},\frac{N+\alpha}{N-2s}),\mu\in\mathbb{R},\lambda\in\mathbb{R}$. If $u\in H^{s}(\mathbb{R}^N)$ is a weak solution of
\begin{align*}
(-\Delta)^{s}u+\lambda u=\mu(I_{\alpha}\star|u|^p)|u|^{p-2}u,
\end{align*}
then the Pohozaev identity holds
\begin{align*}
0=Q_{\mu}(u)=\|(-\Delta)^{\frac{s}{2}}u\|_2^2-\mu\delta_{p}\int_{\mathbb{R}^N}(I_{\alpha}\star|u|^p)|u|^p.
\end{align*}
\end{lemma}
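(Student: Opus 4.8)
The plan is to derive the Pohozaev identity by testing the equation against the natural scaling generator and against $u$ itself, then combining the two resulting identities. Concretely, for $t>0$ set $u_t(x) = u(tx)$ (or, better for working on the constraint-free level, $u_t(x)=t^{N/2}u(tx)$, but here the simplest dilation $u_t(x)=u(x/t)$ suffices). The starting point is the energy functional
\begin{align*}
J(u)=\frac12\|(-\Delta)^{\frac{s}{2}}u\|_2^2+\frac{\lambda}{2}\|u\|_2^2-\frac{\mu}{2p}\int_{\mathbb{R}^N}(I_\alpha\star|u|^p)|u|^p,
\end{align*}
whose critical points are exactly the weak solutions of the stated equation. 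First I would record the scaling behaviour of each term under $u\mapsto u(\cdot/t)$: the Gagliardo term scales like $t^{N-2s}$, the $L^2$ term like $t^N$, and, using that $I_\alpha$ is $(\alpha-N)$-homogeneous, the Choquard term $\int(I_\alpha\star|u|^p)|u|^p$ scales like $t^{N+\alpha}$. Hence $g(t):=J(u(\cdot/t))$ is an explicit combination of these powers of $t$, and since $u$ is a critical point one has $g'(1)=0$, which gives
\begin{align*}
\frac{N-2s}{2}\|(-\Delta)^{\frac{s}{2}}u\|_2^2+\frac{N\lambda}{2}\|u\|_2^2-\frac{(N+\alpha)\mu}{2p}\int_{\mathbb{R}^N}(I_\alpha\star|u|^p)|u|^p=0.
\end{align*}
Next I would use that $u$ solves the equation weakly, so testing with $\varphi=u$ is legitimate under the stated integrability hypotheses ($p<\frac{N+\alpha}{N-2s}$ makes the Choquard term finite by Hardy–Littlewood–Sobolev, Lemma \ref{lhls}, and $p>1+\frac{\alpha}{N}$ guarantees the lower exponent is subcritical), yielding the Nehari-type identity
\begin{align*}
\|(-\Delta)^{\frac{s}{2}}u\|_2^2+\lambda\|u\|_2^2=\mu\int_{\mathbb{R}^N}(I_\alpha\star|u|^p)|u|^p.
\end{align*}
Eliminating $\lambda\|u\|_2^2$ between the two displayed identities — multiply the Nehari identity by $N/2$ and subtract — removes the $\lambda$ term and, after dividing by $s$ and rearranging, produces exactly
\begin{align*}
\|(-\Delta)^{\frac{s}{2}}u\|_2^2=\mu\,\frac{N(p-1)-\alpha}{2ps}\int_{\mathbb{R}^N}(I_\alpha\star|u|^p)|u|^p=\mu\delta_p\int_{\mathbb{R}^N}(I_\alpha\star|u|^p)|u|^p,
\end{align*}
which is the claim $Q_\mu(u)=0$.

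The main obstacle is the rigorous justification of the differentiation identity $g'(1)=0$, i.e. that a weak $H^s$ solution actually satisfies the Pohozaev relation; this is exactly where the restriction $p>\max\{2,1+\frac{\alpha}{N}\}$ and $N\ge 3,\ s\in(0,1)$ enter. For the fractional Laplacian one cannot simply multiply by $x\cdot\nabla u$ and integrate by parts, since $x\cdot\nabla u$ need not lie in $H^s$ and $(-\Delta)^s$ is nonlocal; the standard remedy is to pass to the Caffarelli–Silbestre extension (or argue via the commutator $[(-\Delta)^s, x\cdot\nabla]=2s(-\Delta)^s$ on a suitable dense class) and to regularize $u$, establishing enough a priori regularity and decay so that all boundary terms vanish. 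Since this is precisely the content of \cite{DSS15}, in the write-up I would invoke that reference for the fractional Pohozaev computation and present the algebraic combination above as the remaining elementary step; the Choquard nonlinearity requires no extra care here beyond noting its homogeneity degree $N+\alpha$ under dilation and that its Gâteaux derivative in the direction $x\cdot\nabla u$ is handled by the same Hardy–Littlewood–Sobolev bounds that make $J\in C^1$ on $H^s(\mathbb{R}^N)$.
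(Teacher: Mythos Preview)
The paper gives no proof of this lemma, citing it directly from \cite{DSS15}. Your proposal --- deriving the Pohozaev relation from the dilation $u\mapsto u(\cdot/t)$, combining it with the Nehari identity obtained by testing against $u$, and eliminating $\lambda\|u\|_2^2$ to reach $\|(-\Delta)^{s/2}u\|_2^2=\mu\delta_p\int(I_\alpha\star|u|^p)|u|^p$ --- is correct and is the standard argument carried out in \cite{DSS15}; your acknowledgment that the rigorous justification of $g'(1)=0$ for a weak $H^s$ solution is the only nontrivial step, and your deferral to \cite{DSS15} for it, matches exactly what the paper itself does.
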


In the following, we consider the single equation
\begin{align}\label{scalar problem}
\left\{\begin{aligned}
&(-\Delta)^s u+\lambda u=\mu(I_{\alpha}\star|u|^p)|u|^{p-2}u~\quad x\in \mathbb{R}^N,\\
&\int_{\mathbb{R}^N}u^2dx=c^2.
\end{aligned}
\right.
\end{align}
The corresponding functional with \eqref{scalar problem} is
\begin{align*}%\label{scalar functional}
M^{p}_{\mu}(u)=\frac{1}{2}\int_{\mathbb{R}^N}|(-\Delta)^{\frac{s}{2}} u|^2dx-\frac{\mu}{2p}\int_{\mathbb{R}^N}\int_{\mathbb{R}^N}\frac{|u(x)|^{p}|u(y)|^{p}}{|x-y|^{N-\alpha}}dydx.
\end{align*}
Denote that
\begin{align}\label{mcnc}
\begin{split}
&m(c,\mu,p):=\inf_{u\in S_{c}}M^{p}_{\mu}(u)\text{ for }1+\frac{\alpha}{N}<p<1+\frac{\alpha+2s}{N},\\
&n(c,\mu,p):=\inf_{\mathcal{N}_{c}}M^{p}_{\mu}(u) \text{ for }1+\frac{\alpha+2s}{N}<p<\frac{N+\alpha}{N-2s},
\end{split}
\end{align}
where $
\mathcal{N}_{c}:=\{u\in S_{c}:Q_{\mu}(u)=0\}$.

We collect some properties about the scalar equation, which can be seen in \cite{DSS15} and \cite{LL20}.
\begin{lemma}\label{lm}
\begin{itemize}
  \item[(i)]Assume that $1+\frac{\alpha}{N}<p<1+\frac{\alpha+2s}{N}$. Then
  \begin{enumerate}
  \item $m(c,\mu,p)<0$ for all $c>0$.
  \item $m(c,\mu,p)<m(c-\alpha,\mu,p)+m(\alpha,\mu,p)$ for any $\alpha\in(0,c)$.
  \end{enumerate}
  \item[(ii)] Assume that $\max\{2,1+\frac{\alpha+2s}{N}\}<p<\frac{N+\alpha}{N-2s}$. Then
  $n(c,\mu,p)$is strictly decreasing with respect to $c$.
\end{itemize}
\end{lemma}

\begin{lemma}\cite{DSS15}\label{lregularity}
Assume $1+\frac{\alpha}{N}<p<\frac{N+\alpha}{N-2s}$. Then the equation $(-\Delta)^s u+\lambda u=\mu(I_{\alpha}\star|u|^p)|u|^{p-2}u$ has a ground state solution $u\in H^{s}(\mathbb{R}^N)$, which is positive, radially symmetric and decreasing. Furthermore,
\begin{itemize}
\item [(i)] if $s\le\frac12$, then $u\in L^1(\mathbb{R}^N)\cap C^{0,\mu}(\mathbb{R}^N)$ for some $\mu\in(0,2s)$;
 \item [(ii)] if $s>\frac12$, then $u\in L^1(\mathbb{R}^N)\cap C^{1,\mu}(\mathbb{R}^N)$ for some $\mu\in(0,2s-1)$;
 \item [(iii)] if $p\ge 2$, then there exists $C>0$ such that
 $$
 u(x)=\frac{C}{|x|^{N+2s}}+o(|x|^{-(N+2s)}),\text{~as~}|x|\to\infty.
$$
\end{itemize}
\end{lemma}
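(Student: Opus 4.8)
This statement collects existence, symmetry, regularity and decay for the scalar fractional Choquard equation (with $\lambda,\mu>0$ fixed), and the plan is to treat these four issues in that order. For \textbf{existence and the qualitative properties} I would work with the $C^1$ functional $J(u)=\tfrac12\|(-\Delta)^{s/2}u\|_2^2+\tfrac\lambda2\|u\|_2^2-\tfrac{\mu}{2p}B(u,u,p)$ on $H^s(\mathbb R^N)$; since $1+\frac\alpha N<p<\frac{N+\alpha}{N-2s}$, Lemma \ref{lhls} together with the fractional Gagliardo--Nirenberg inequality makes $B(u,u,p)$ well defined and strictly subcritical, so $J$ has the mountain--pass geometry and the Nehari set $\mathcal N=\{u\neq0:\langle J'(u),u\rangle=0\}$ is a genuine constraint with $c:=\inf_{\mathcal N}J>0$. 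Given a minimizing sequence, I would replace every term by its Schwarz symmetrization: the Riesz rearrangement inequality (the lemma quoted from \cite{LL01}) does not decrease $B(u,u,p)$ while the fractional P\'olya--Szeg\H{o} inequality does not increase $\|(-\Delta)^{s/2}\cdot\|_2$, so the sequence may be taken radial, nonnegative and nonincreasing. It is bounded in $H^s$ by the Nehari identity, and the compact embedding $H^s_{\mathrm{rad}}(\mathbb R^N)\hookrightarrow L^q(\mathbb R^N)$ for $q\in(2,2^{*}_s)$ together with Lemmas \ref{lbw} and \ref{lbl} lets one pass to the weak limit, show it is nonzero (otherwise $c=0$) and attains $c$. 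A Lagrange-multiplier argument makes it a weak solution, and the Nehari and Pohozaev identities (Lemma \ref{lpohozaev}) identify it as a ground state; radial monotonicity and $u\ge0$ pass through the symmetrization, and the strong maximum principle for $(-\Delta)^s$ — if $u(x_0)=0$ then the singular integral defining $(-\Delta)^s u(x_0)$ is strictly negative, contradicting $(-\Delta)^s u(x_0)=-\lambda u(x_0)+\mu(I_\alpha\star u^p)(x_0)u(x_0)^{p-1}\ge0$ — upgrades $u\ge0$ to $u>0$.

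\textbf{Regularity} would be obtained by a finite bootstrap against the nonlocal nonlinearity. From $u\in H^s\subset L^q$ for $q\in[2,2^{*}_s]$, Lemma \ref{lIa} (via the weak Young and Hardy--Littlewood--Sobolev inequalities) puts $(I_\alpha\star u^p)u^{p-1}$ in some $L^{r_0}$; Bessel-potential estimates for $(-\Delta)^s+\lambda$ then improve the integrability of $u$, and iterating finitely many times gives $u\in L^\infty$. Once $u\in L^1\cap L^\infty$ the potential $I_\alpha\star u^p$ is bounded and H\"older continuous, so the right-hand side lies in $C^{0,\gamma}$ and Schauder-type estimates for the fractional Laplacian give $u\in C^{0,\mu}$ with $\mu<2s$ when $2s\le1$, resp.\ $u\in C^{1,\mu}$ with $\mu<2s-1$ when $2s>1$.

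For \textbf{decay and the $L^1$ bound} I would write $u=\mathcal G_\lambda\star g$ with $g:=\mu(I_\alpha\star u^p)u^{p-1}$, where $\mathcal G_\lambda>0$ is the kernel of $((-\Delta)^s+\lambda)^{-1}$ and satisfies $\mathcal G_\lambda(x)\sim\kappa|x|^{-(N+2s)}$ as $|x|\to\infty$ for some $\kappa=\kappa(N,s,\lambda)>0$. A comparison/iteration argument on this representation yields polynomial decay of $u$, hence $u\in L^1$; and when $p\ge2$ the source $g$ is integrable and decays strictly faster than $|x|^{-(N+2s)}$ (since $u^{p-1}$ then decays at least as fast as $u$, while $I_\alpha\star u^p$ stays bounded), so dominated convergence in $u(x)=\int\mathcal G_\lambda(x-y)g(y)\,dy$ gives $u(x)=\big(\int g\big)\mathcal G_\lambda(x)\,(1+o(1))=C|x|^{-(N+2s)}+o(|x|^{-(N+2s)})$ with $C=\kappa\int g>0$.

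The \textbf{main obstacle} is the compactness in the existence step: on all of $\mathbb R^N$ with a nonlocal nonlinearity a minimizing sequence may leak mass to infinity, and the clean fix — restricting to radial functions — is available only because Schwarz symmetrization is compatible with \emph{both} the kinetic term (fractional P\'olya--Szeg\H{o}) and the Choquard term (Riesz rearrangement); even then one must still exclude vanishing and control the nonlocal Br\'ezis--Lieb splitting of Lemma \ref{lbl}. The sharp $|x|^{-(N+2s)}$ tail, both of the resolvent kernel and hence of $u$, is the other delicate point: it is a genuinely nonlocal effect with no local analogue, and the hypothesis $p\ge2$ is exactly what prevents the nonlinear self-interaction from altering the leading-order coefficient.
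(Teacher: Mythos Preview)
The paper does not prove this lemma: it is quoted verbatim from \cite{DSS15} (d'Avenia--Siciliano--Squassina) and no argument is given, so there is no in-paper proof to compare against. Your sketch is broadly the strategy of that reference --- variational existence plus rearrangement for symmetry, an $L^q$-bootstrap through Lemma~\ref{lIa} and Bessel-potential estimates for regularity, and the resolvent-kernel representation $u=\mathcal G_\lambda\star g$ with $\mathcal G_\lambda(x)\sim\kappa|x|^{-(N+2s)}$ for the sharp decay --- so in spirit you are aligned with the source.

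Two points worth tightening if you flesh this out. First, in the decay step your claim that $g=\mu(I_\alpha\star u^p)u^{p-1}$ ``decays strictly faster than $|x|^{-(N+2s)}$'' for $p\ge 2$ needs the extra input that $I_\alpha\star u^p(x)\to 0$ (in fact $\sim c|x|^{-(N-\alpha)}$) at infinity; merely saying it ``stays bounded'' only gives $g\lesssim |x|^{-(N+2s)}$ when $p=2$, which is not strictly faster and would not by itself yield the exact leading coefficient via dominated convergence. Second, the $L^1$ statement is used in your regularity paragraph before you derive it in the decay paragraph; in \cite{DSS15} the order is bootstrap $\Rightarrow L^\infty$ $\Rightarrow$ polynomial decay $\Rightarrow L^1$, and H\"older regularity only requires $u\in L^\infty$ (so that the right-hand side is bounded and H\"older), not $L^1$. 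These are presentational rather than genuine gaps.
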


\begin{lemma}\label{llt}\cite{LHXY20}
Assume that $u\in H^s(\mathbb{R}^N)\cap L^{q}(\mathbb{R}^N)$ with $N>2s,~q\in(0,\frac{N}{N-2s}]$, and $u$ satisfies
\begin{align*}
\left\{\begin{aligned}
&(-\Delta)^{s}u\ge0,\\
&u\ge0.
\end{aligned}
\right.
\end{align*}
Then $u\equiv0$.
\end{lemma}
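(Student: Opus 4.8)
Proof proposal for Theorem \ref{llt} (the Liouville-type lemma).

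The plan is to argue by contradiction, exploiting the maximum-principle-type structure of the fractional Laplacian through a test-function/capacity argument. Suppose $u \not\equiv 0$. Since $u \ge 0$ and $(-\Delta)^s u \ge 0$, the function $u$ is $s$-superharmonic on all of $\mathbb{R}^N$. First I would recall the integral representation: for a suitable cutoff $\varphi_R(x) = \varphi(x/R)$ with $\varphi \in C_c^\infty(\mathbb{R}^N)$, $0 \le \varphi \le 1$, $\varphi \equiv 1$ on $B_1$, $\mathrm{supp}\,\varphi \subset B_2$, one tests the inequality against $\varphi_R^{2}$ (or a power adapted to the nonlocal quadratic form) to get
\begin{align*}
0 \le \int_{\mathbb{R}^N} (-\Delta)^{s} u \,\varphi_R^{2}\, dx = \iint_{\mathbb{R}^N\times\mathbb{R}^N} \frac{(u(x)-u(y))(\varphi_R^{2}(x)-\varphi_R^{2}(y))}{|x-y|^{N+2s}}\,dx\,dy.
\end{align*}
Using the elementary pointwise bound $(a-b)(A^2-B^2) \le C\,(a+b)(A-B)^2$ type manipulations together with $u\ge 0$, one derives a Caccioppoli-type estimate controlling a weighted Gagliardo seminorm of $u$ on $B_R$ by $R^{-2s}\|u\|_{L^1}^{\,?}$-type quantities, or more directly one shows $\iint_{|x-y|\le 1,\,x,y\in B_R}\frac{(u(x)-u(y))^2}{|x-y|^{N+2s}} \le C R^{N-2s}$ after Hölder, which combined with the hypothesis $u \in L^q$ for $q \le \frac{N}{N-2s}$ will force a contradiction as $R \to \infty$.

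The cleaner route, which I would actually follow, uses the known representation that a nonnegative $s$-superharmonic function on $\mathbb{R}^N$ which is not identically zero satisfies a pointwise lower bound $u(x) \ge c\,|x|^{-(N-2s)}$ for $|x|$ large (this is the fractional analogue of the fact that a positive superharmonic function on $\mathbb{R}^N$, $N\ge 3$, is bounded below by a multiple of the Green function; it follows from the Riesz decomposition $u = I_{2s}\star \mu + h$ with $\mu = (-\Delta)^s u \ge 0$ a nonnegative measure and $h$ a nonnegative $s$-harmonic function, hence $h$ is a constant $\ge 0$; if $\mu = 0$ and $h=0$ then $u\equiv 0$, contradiction, so either $h>0$, giving $u$ bounded below by a positive constant — impossible since then $u\notin L^q$ for any finite $q$ — or $\mu\ne 0$, and then $u(x) \ge I_{2s}\star\mu(x) \gtrsim |x|^{-(N-2s)}$ for large $|x|$). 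Then $|u(x)|^q \gtrsim |x|^{-q(N-2s)}$, and since $q(N-2s) \le N$ we get $\int_{|x|\ge 1} |u|^q = \infty$, contradicting $u \in L^q(\mathbb{R}^N)$. This settles $u \equiv 0$.

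The main obstacle is justifying the Riesz decomposition / pointwise Green-function lower bound in the fractional setting at the level of generality needed here (namely for $u$ only assumed in $H^s \cap L^q$ rather than smooth), and in particular verifying that the nonnegative $s$-harmonic remainder is constant — this is a fractional Liouville theorem in its own right — and that the potential $I_{2s}\star\mu$ is finite a.e. so the decomposition makes sense. I would handle this by first establishing $u \in L^1_{\mathrm{loc}}$ with the weight $(1+|x|)^{-(N+2s)}$ (which is automatic from $u\in L^q$ via Hölder since $q' (N+2s) > N$), so that $(-\Delta)^s u$ is a well-defined distribution and the abstract potential theory for the fractional Laplacian (Riesz kernels, fractional Bôcher/Liouville theorems) applies; alternatively, if one prefers to stay elementary, replace the decomposition step by the direct test-function computation sketched above, choosing the cutoff power and iterating the Caccioppoli estimate to propagate decay, which avoids invoking potential theory but costs more computation. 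Either way, the $L^q$ hypothesis with the sharp exponent $\frac{N}{N-2s}$ is exactly what is needed to close the contradiction, so the structure of the argument is dictated by that borderline integrability.
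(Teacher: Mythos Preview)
The paper does not actually prove this lemma: it is quoted verbatim from \cite{LHXY20} as a preliminary tool and no argument is supplied here, so there is no ``paper's own proof'' to compare against.

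Your proposed route via the Riesz representation is a standard and correct way to obtain the result. The essential point --- that a nontrivial nonnegative $s$-superharmonic function on $\mathbb{R}^N$ must dominate a positive multiple of the Riesz kernel $|x|^{-(N-2s)}$ at infinity --- is exactly what makes the exponent $\frac{N}{N-2s}$ sharp, and you have identified this correctly: the integral $\int_{|x|\ge 1}|x|^{-q(N-2s)}\,dx$ diverges precisely when $q(N-2s)\le N$, yielding the contradiction with $u\in L^q$. Your awareness of the technical hurdles (justifying the decomposition $u=I_{2s}\star\mu+h$ at the regularity level $H^s\cap L^q$, and invoking a fractional Liouville theorem to force the $s$-harmonic remainder $h$ to be constant) is appropriate; both points are handled in the literature on fractional potential theory, and your fallback capacity/Caccioppoli argument is also viable if one wants to avoid citing those results. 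One small remark: in the constant case $h>0$ you say $u\notin L^q$ for any finite $q$, which is of course correct, but this already follows from the lower-bound route anyway, so the case split is not strictly necessary once the Green-function lower bound is in hand.
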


\section{$L^2$-subcritical case}
In this section, we consider the $L^2$-subcritical case under different potentials.
\subsection{Proof of Theorem \ref{t1}}
In this subsection, if $V_1(x)=V_2(x)=0$, then the corresponding energy functional is
\begin{align*}
E^1_{\mu_1,\mu_2,\beta}(u,v)
=\frac12A(u,v)-\frac{\mu_1}{2p}B(u,u,p)-\frac{\mu_2}{2q}B(v,v,q)-\frac{\beta}{r}B(u,v,r).
\end{align*}
\begin{lemma}\label{lebfb}
$E^1_{\mu_1,\mu_2,\beta}(u,v)$ is bounded from below on $ S_{a}\times S_{b}$.
\end{lemma}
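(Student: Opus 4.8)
The plan is to bound each of the three nonlocal terms in $E^1_{\mu_1,\mu_2,\beta}$ from above by a power of the kinetic norm that is strictly smaller than $2$, exploiting the $L^2$-subcritical hypotheses $p,q,r<1+\frac{\alpha+2s}{N}$. For the self-interaction terms this is exactly the content of the sharp inequality in Lemma~\ref{lggn}; for the coupling term $B(u,v,r)$ it follows from the Hardy--Littlewood--Sobolev inequality (Lemma~\ref{lhls}) followed by the fractional Gagliardo--Nirenberg inequality. Since $(u,v)\in S_a\times S_b$ fixes $\|u\|_2=a$ and $\|v\|_2=b$, all $L^2$-norms get absorbed into constants, and the only free quantities are $t_1:=\|(-\Delta)^{\frac s2}u\|_2$ and $t_2:=\|(-\Delta)^{\frac s2}v\|_2$, so the estimate reduces to showing that a scalar function of $(t_1,t_2)$ with leading term $\tfrac12(t_1^2+t_2^2)$ and sub-quadratic corrections is bounded below.

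Concretely, set $\delta_p=\frac{N(p-1)-\alpha}{2ps}$ and similarly $\delta_q,\delta_r$. By Lemma~\ref{lggn},
\begin{align*}
\frac{\mu_1}{2p}B(u,u,p)\le C_1\, t_1^{2p\delta_p},\qquad \frac{\mu_2}{2q}B(v,v,q)\le C_2\, t_2^{2q\delta_q},
\end{align*}
with $C_1,C_2$ depending only on $N,s,\alpha,p,q,\mu_1,\mu_2,a,b$. For the coupling term, applying Lemma~\ref{lhls} with $f=|u|^r$, $h=|v|^r$, $t=\tilde r=\frac{2N}{N+\alpha}$, and then the fractional Gagliardo--Nirenberg inequality to $\|u\|_{\frac{2Nr}{N+\alpha}}$, $\|v\|_{\frac{2Nr}{N+\alpha}}$ (admissible since $1+\frac{\alpha}{N}<r<\frac{N+\alpha}{N-2s}$ forces $\frac{2Nr}{N+\alpha}\in(2,2^*_s)$), one obtains
\begin{align*}
\frac{\beta}{r}B(u,v,r)\le C_3\, t_1^{r\delta_r}t_2^{r\delta_r}\le \frac{C_3}{2}\big(t_1^{2r\delta_r}+t_2^{2r\delta_r}\big),
\end{align*}
the last inequality by Young's inequality. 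The crucial observation is that $p,q,r<1+\frac{\alpha+2s}{N}$ is equivalent to $2p\delta_p<2$, $2q\delta_q<2$, $2r\delta_r<2$, while $p,q,r>1+\frac{\alpha}{N}$ makes these exponents positive.

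Collecting the bounds,
\begin{align*}
E^1_{\mu_1,\mu_2,\beta}(u,v)\ge \Big(\tfrac12 t_1^2-C_1 t_1^{2p\delta_p}-\tfrac{C_3}{2}t_1^{2r\delta_r}\Big)+\Big(\tfrac12 t_2^2-C_2 t_2^{2q\delta_q}-\tfrac{C_3}{2}t_2^{2r\delta_r}\Big),
\end{align*}
and each bracket is a continuous function of $t_i\in[0,\infty)$ whose quadratic leading term dominates the nonnegative, sub-quadratic correction powers; hence each bracket is bounded below, and therefore so is $E^1_{\mu_1,\mu_2,\beta}$ on $S_a\times S_b$ by a constant depending only on the fixed data $N,s,\alpha,p,q,r,\mu_1,\mu_2,\beta,a,b$. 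I do not anticipate a real obstacle: the only points requiring care are verifying that the Lebesgue exponents $\frac{2Np}{N+\alpha},\frac{2Nq}{N+\alpha},\frac{2Nr}{N+\alpha}$ lie in $(2,2^*_s)$ and that the resulting gradient exponents are strictly below $2$ — after that the statement follows from the elementary fact that $\tfrac12 t^2$ eventually dominates any lower-order power of $t$.
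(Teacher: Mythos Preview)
Your proof is correct and follows essentially the same route as the paper: bound the two self-interaction terms via the sharp Gagliardo--Nirenberg-type inequality (Lemma~\ref{lggn}), bound the cross term $B(u,v,r)$ via Hardy--Littlewood--Sobolev plus Gagliardo--Nirenberg, then split the resulting product $t_1^{r\delta_r}t_2^{r\delta_r}$ by Young's inequality and use that all gradient exponents $2p\delta_p,2q\delta_q,2r\delta_r$ are strictly below $2$ in the $L^2$-subcritical range. The only cosmetic difference is that the paper phrases the splitting step as ``H\"older inequality'' rather than Young, and writes the constants slightly differently; the logic is identical.
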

\begin{proof}
By \eqref{gGN} and H\"older inequality, we have
\begin{align*}
E^1_{\mu_1,\mu_2,\beta}(u,v)
\ge&\frac12A(u,v)-\frac{C(N,p,s,\alpha)\mu_1a^{2p(1-\delta_{p})}}{2p}\|(-\Delta)^{\frac{s}{2}}u\|_2^{2p\delta_{p}}\\
&-\frac{C(N,q,s,\alpha)\mu_2b^{2q(1-\delta_{q})}}{2q}\|(-\Delta)^{\frac{s}{2}}v\|_2^{2q\delta_{q}}\\
&-\frac{C(N,r,s,\alpha)\beta a^{r(1-\delta_{r})}b^{r(1-\delta_{r})}}{r}\|(-\Delta)^{\frac{s}{2}}u\|_2^{r\delta_{r}}\|(-\Delta)^{\frac{s}{2}}v\|_2^{r\delta_{r}}\\
\ge&\frac12A(u,v)-\frac{C(N,p,s,\alpha)\mu_1a^{2p(1-\delta_{p})}}{2p}\|(-\Delta)^{\frac{s}{2}}u\|_2^{2p\delta_{p}}\\
&-\frac{C(N,q,s,\alpha)\mu_2b^{2q(1-\delta_{q})}}{2q}\|(-\Delta)^{\frac{s}{2}}v\|_2^{2q\delta_{q}}\\
&-\frac{C(N,r,s,\alpha)\beta a^{2r(1-\delta_{r})}}{2r}\|(-\Delta)^{\frac{s}{2}}u\|_2^{2r\delta_{r}}\\
&-\frac{C(N,r,s,\alpha)\beta b^{2r(1-\delta_{r})}}{2r}\|(-\Delta)^{\frac{s}{2}}v\|_2^{2r\delta_{r}}.
\end{align*}
Since $p,q,r\in(1+\frac{\alpha}{N},1+\frac{\alpha+2s}{N})$, $2p\delta_{p}<2,~2q\delta_{q}<2$ and $2r\delta_{r}<2$. Hence $E^1_{\mu_1,\mu_2,\beta}(u,v)$ is bounded from below. Therefore, we complete the lemma.
\end{proof}
\begin{remark}\label{r1}
Due to $E^1_{\mu_1,\mu_2,\beta}(u,v)$ is bounded from below, we consider the problem
\begin{align*}%\label{problem}
e_1(a,b)=\inf_{(u,v)\in S_a\times S_b}E^1_{\mu_1,\mu_2,\beta}(u,v),
\end{align*}
and obtain a minimizing sequence $\{(u_n,v_n)\}\subset S_a\times S_b$ for $e_1(a,b)$. Moreover, $\{(u_n,v_n)\}$ is bounded in $H^s(\mathbb{R}^N)\times H^s(\mathbb{R}^N)$. Hence there exists $(u,v)\in H^s(\mathbb{R}^N)\times H^s(\mathbb{R}^N)$ such that $(u_n,v_n)\rightharpoonup(u,v)$ weakly in $H^s(\mathbb{R}^N)\times H^s(\mathbb{R}^N)$. In order to obtain the strong convergence of minimizing sequence, we consider the work space $H_{r}^{s}(\mathbb{R}^N)\times H_{r}^{s}(\mathbb{R}^N)$. Thus $(u_n,v_n)\to(u,v)$ strongly in $L^{t_1}(\mathbb{R}^N)\times L^{t_2}(\mathbb{R}^N)$ for $t_1,t_2\in(2,2_{s}^{*})$.  By \eqref{gGN}, we can derive that
 \begin{align}\label{Bconvergence}
 \left\{\begin{aligned}
 &B(u_n,u_n,p)\rightarrow B(u,u,p),\\
 &B(v_n,v_n,q)\rightarrow B(v,v,q),\\
 &B(u_n,v_n,r)\rightarrow B(u,v,r).
 \end{aligned}
 \right.
 \end{align}
Furthermore, due to $|(-\Delta)^{\frac{s}{2}}|u||\le |(-\Delta)^{\frac{s}{2}}u|$, we may assume that $u_n\ge0$ and $v_n\ge0$. In a brief conclusion, there exists a nonnegative and radial symmetry minimizing sequence $\{(u_n,v_n)\}\subset S_a\times S_b$ such that $e_1(a,b)=\lim\limits_{n\rightarrow\infty}E^1_{\mu_1,\mu_2,\beta}(u_n,v_n)$.
\end{remark}

From $dE^1_{\mu_1,\mu_2,\beta}|_{S_a\times S_b}(u_n,v_n)\rightarrow0$, there exist two sequences of real numbers $\{\lambda_{1n}\}$ and $\{\lambda_{2n}\}$ such that
\begin{align}\label{dE1}
\begin{split}
&\int_{\mathbb{R}^N}(-\Delta)^{\frac{s}{2}}u_n\varphi dx-\mu_1\int_{\mathbb{R}^N}(I_{\alpha}\star|u_n|^p)|u_n|^{p-2}u_n\varphi dx\\
&-\beta\int_{\mathbb{R}^N}(I_{\alpha}\star|v_n|^r)|u_n|^{r-2}u_n\varphi dx
+\int_{\mathbb{R}^N}(-\Delta)^{\frac{s}{2}}v_n\psi dx\\ &-\mu_2\int_{\mathbb{R}^N}(I_{\alpha}\star|v_n|^q)|v_n|^{q-2}v_n\psi dx-\beta\int_{\mathbb{R}^N}(I_{\alpha}\star|u_n|^r)|v_n|^{r-2}v_n\psi dx\\
&+\lambda_{1n}\int_{\mathbb{R}^N}u_n\varphi dx+\lambda_{2n}\int_{\mathbb{R}^N}v_n\psi dx=o(1)(\|\varphi\|_{H^s}+\|\psi\|_{H^s})
\end{split}
\end{align}
for any $(\varphi,\psi)\in H^s(\mathbb{R}^N)\times H^s(\mathbb{R}^N)$ with $o(1)\rightarrow0$ as $n\rightarrow\infty$.

The following two lemmas will play an important role in the  strong convergence of minimizing sequence in $H^s(\mathbb{R}^N)\times H^s(\mathbb{R}^N)$.
\begin{lemma}\label{llb}
Both $\{\lambda_{1n}\}$ and $\{\lambda_{2n}\}$ are bounded sequences.
\end{lemma}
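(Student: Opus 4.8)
The plan is to test the approximate critical point equation \eqref{dE1} against the functions $(u_n,0)$ and $(0,v_n)$ themselves, which (after dividing by the normalizations $a^2$ and $b^2$) will express $\lambda_{1n}$ and $\lambda_{2n}$ explicitly in terms of quantities we already control. Concretely, taking $\varphi=u_n$, $\psi=0$ gives
\begin{align*}
\lambda_{1n}a^2 = -\|(-\Delta)^{\frac{s}{2}}u_n\|_2^2+\mu_1 B(u_n,u_n,p)+\beta\int_{\mathbb{R}^N}(I_{\alpha}\star|v_n|^r)|u_n|^r\,dx+o(1)(1+\|u_n\|_{H^s}),
\end{align*}
and similarly for $\lambda_{2n}$ with the roles of $u_n,v_n$ and $\mu_1,\mu_2,p,q$ exchanged (note the last coupling term is the same $B$-type quantity by the symmetry of the Riesz kernel).

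From Remark \ref{r1} the minimizing sequence $\{(u_n,v_n)\}$ is bounded in $H^s(\mathbb{R}^N)\times H^s(\mathbb{R}^N)$, so every term on the right-hand side is bounded: the kinetic term by boundedness in $H^s$, and each nonlocal term by the Gagliardo-Nirenberg estimate \eqref{gGN} (for $B(u_n,u_n,p)$ and $B(v_n,v_n,q)$) together with the Hardy-Littlewood-Sobolev / H\"older bound for the mixed term $\int(I_{\alpha}\star|v_n|^r)|u_n|^r$, exactly as in the proof of Lemma \ref{lebfb}. The $o(1)(1+\|u_n\|_{H^s})$ error is $o(1)$ since $\|u_n\|_{H^s}$ is bounded. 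Dividing by $a^2>0$ (respectively $b^2>0$) then yields that $\{\lambda_{1n}\}$ and $\{\lambda_{2n}\}$ are bounded.

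I do not anticipate a genuine obstacle here — this is the standard trick of testing the constrained Euler-Lagrange equation against the constrained function itself, and the only inputs needed (boundedness of the minimizing sequence, and the Gagliardo-Nirenberg and Hardy-Littlewood-Sobolev bounds on the nonlocal terms) are already in place from Remark \ref{r1} and Lemmas \ref{lhls}, \ref{lggn}. The one point that deserves a line of care is making sure the test functions $(u_n,0)$ and $(0,v_n)$ are admissible in \eqref{dE1} — they are, since \eqref{dE1} holds for all $(\varphi,\psi)\in H^s(\mathbb{R}^N)\times H^s(\mathbb{R}^N)$ and $u_n,v_n\in H^s(\mathbb{R}^N)$ — and checking that the mixed coupling term is correctly identified as a nonnegative bounded quantity (here $\beta>0$ is used only for signs elsewhere, not for boundedness).
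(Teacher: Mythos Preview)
Your proposal is correct and follows essentially the same approach as the paper: test \eqref{dE1} with $(\varphi,\psi)=(u_n,0)$ and $(0,v_n)$, use the boundedness of $\{(u_n,v_n)\}$ in $H^s\times H^s$ from Remark~\ref{r1} together with \eqref{gGN} and Lemma~\ref{lhls} to bound each term on the right, and divide by $a^2,b^2$. The paper's proof is the same (it writes the mixed term simply as $B(u_n,v_n,r)$) and additionally remarks that, up to a subsequence, $\lambda_{1n}\to\lambda_1$ and $\lambda_{2n}\to\lambda_2$.
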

\begin{proof}
Taking $(u_n,0)$ and $(0,v_n)$ in \eqref{dE1}, we obtain
\begin{align*}
\lambda_{1n}a^2+o(1)
=-\|(-\Delta)^{\frac{s}{2}}u_n\|_2^2+\mu_1B(u_n,u_n,p) +\beta B(u_n,v_n,r),
\end{align*}
and
\begin{align*}
\lambda_{2n}b^2+o(1)
=-\|(-\Delta)^{\frac{s}{2}}v_n\|_2^2+\mu_2B(v_n,v_n,q) +\beta B(u_n,v_n,r),
\end{align*}
where $o(1)\rightarrow0$ as $n\rightarrow\infty$. Due to $\{(u_n,v_n)\}\subset H^s(\mathbb{R}^N)\times H^s(\mathbb{R}^N)$, by \eqref{gGN}, it yields that $B(u_n,u_n,p),~B(v_n,v_n,q)$ and $B(u_n,v_n,r)$ are bounded. Therefore, both $\{\lambda_{1n}\}$ and $\{\lambda_{2n}\}$ are bounded. Furthermore, there exist two real numbers $\lambda_1$ and $\lambda_2$ such that $\lambda_{1n}\rightarrow\lambda_1$ and $\lambda_{2n}\rightarrow\lambda_2$.
\end{proof}

Notice that the sign of $\lambda_{i}(i=1,2)$ plays a key role in the strong convergence of $\{(u_n,v_n)\}$ in $H^s(\mathbb{R}^N)\times H^s(\mathbb{R}^N)$.
\begin{lemma}\label{lsl}
If $\lambda_1>0$ (resp. $\lambda_2>0$), then $u_n\rightarrow u$ (resp. $v_n\rightarrow v$) strongly in $H_{r}^s(\mathbb{R}^N)$.
\end{lemma}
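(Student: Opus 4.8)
The plan is to test the approximate Lagrange identity \eqref{dE1} against the pair $(\varphi,\psi)=(u_n-u,0)$ and to extract from it an equality of the form
$$
\|(-\Delta)^{\frac{s}{2}}(u_n-u)\|_2^2+\lambda_1\|u_n-u\|_2^2=o(1);
$$
since $\lambda_1>0$, the left-hand side controls $\|u_n-u\|_{H^s}^2$, so $u_n\to u$ strongly at once. The $v$-component is treated symmetrically, testing \eqref{dE1} against $(0,v_n-v)$ and using $\lambda_2>0$.

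First I would collect the limits of the individual terms arising when $(\varphi,\psi)=(u_n-u,0)$ is inserted into \eqref{dE1}; note that this choice annihilates the entire $v$-block. Because $p,q,r\in(1+\frac{\alpha}{N},1+\frac{\alpha+2s}{N})\subset[1+\frac{\alpha}{N},\frac{N+\alpha}{N-2s}]$, Lemma \ref{lbw} and the remark after it apply to the weakly convergent sequences $u_n\rightharpoonup u$, $v_n\rightharpoonup v$ in $H^s_r(\mathbb{R}^N)$, giving $(I_{\alpha}\star|u_n|^p)|u_n|^{p-2}u_n\to(I_{\alpha}\star|u|^p)|u|^{p-2}u$ and $(I_{\alpha}\star|v_n|^r)|u_n|^{r-2}u_n\to(I_{\alpha}\star|v|^r)|u|^{r-2}u$ strongly in $H^{-s}(\mathbb{R}^N)$. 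Since $u_n-u\rightharpoonup0$ and $\{u_n-u\}$ is bounded in $H^s(\mathbb{R}^N)$, pairing strong $H^{-s}$-convergence against $u_n-u$ forces $\int_{\mathbb{R}^N}(I_{\alpha}\star|u_n|^p)|u_n|^{p-2}u_n(u_n-u)\,dx\to0$ and $\int_{\mathbb{R}^N}(I_{\alpha}\star|v_n|^r)|u_n|^{r-2}u_n(u_n-u)\,dx\to0$. Next, from $(-\Delta)^{s/2}u_n\rightharpoonup(-\Delta)^{s/2}u$ in $L^2$ the kinetic (bilinear-form) term $\int_{\mathbb{R}^N}(-\Delta)^{s/2}u_n\,(-\Delta)^{s/2}(u_n-u)\,dx$ equals $\|(-\Delta)^{s/2}(u_n-u)\|_2^2+o(1)$, while $\int_{\mathbb{R}^N}u_n(u_n-u)\,dx=\|u_n-u\|_2^2+o(1)$; finally $\lambda_{1n}\to\lambda_1$ by Lemma \ref{llb}, so the mass term is $\lambda_1\|u_n-u\|_2^2+o(1)$, and the right-hand side of \eqref{dE1} is $o(1)\|u_n-u\|_{H^s}=o(1)$ by boundedness.

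Substituting all of this into \eqref{dE1} with $(\varphi,\psi)=(u_n-u,0)$ produces exactly the displayed identity. Since $\lambda_1>0$ and both summands on the left are nonnegative, each tends to $0$; hence $\|u_n-u\|_{H^s}^2=\|(-\Delta)^{\frac{s}{2}}(u_n-u)\|_2^2+\|u_n-u\|_2^2\to0$, i.e. $u_n\to u$ strongly in $H^s_r(\mathbb{R}^N)$, and the argument for $v$ is word-for-word the same. I do not expect a genuine obstacle in this lemma: every step is routine once \eqref{dE1}, Lemma \ref{lbw} and Lemma \ref{llb} are available, the only points needing care being the range check that makes Lemma \ref{lbw} applicable and the pairing of $H^{-s}$-strong with $H^s$-weak-to-zero convergence. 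The substantive difficulty of the whole subcritical scheme lies elsewhere, namely in establishing the sign $\lambda_1>0$ (resp. $\lambda_2>0$), which is obtained later from $u\not\equiv0$, $v\not\equiv0$ together with the Liouville-type Lemma \ref{llt}.
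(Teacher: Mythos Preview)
Your proof is correct and follows essentially the same route as the paper: test \eqref{dE1} with $(\varphi,\psi)=(u_n-u,0)$, show that the nonlocal terms vanish, and use $\lambda_1>0$ to conclude. The only cosmetic difference is that the paper phrases the computation as $(dE^1_{\mu_1,\mu_2,\beta}(u_n,v_n)-dE^1_{\mu_1,\mu_2,\beta}(u,v))[(u_n-u,0)]+\lambda_1\|u_n-u\|_2^2=o(1)$ and disposes of the nonlocal contributions via \eqref{Bconvergence} rather than Lemma~\ref{lbw}, but the substance is identical.
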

\begin{proof}
Suppose that $\lambda_1>0$. Then applying $u_n\rightharpoonup u$ weakly in $H_{r}^s(\mathbb{R}^N)$, by \eqref{Bconvergence} and \eqref{dE1}, we can obtain that
\begin{align*}
o(1)&=(dE^1_{\mu_1,\mu_2,\beta}(u_n,v_n)-dE^1_{\mu_1,\mu_2,\beta}(u,v))[(u_n-u,0)]+\lambda_1\int_{\mathbb{R}^N}|u_n-u|^2dx\\
&=\int_{\mathbb{R}^N}|(-\Delta)^{\frac{s}{2}}(u_n-u)|^2+\lambda_1\int_{\mathbb{R}^N}|u_n-u|^2dx,
\end{align*}
with $o(1)\rightarrow0$ as $n\rightarrow\infty$. Due to $\lambda_1>0$, the strong  convergence  of $\{u_n\}$ in $H^s(\mathbb{R}^N)$ holds.
\end{proof}

\begin{lemma}\label{le1}
If $e_1(a,b)$ can be arrived, then
\begin{align*}
e_1(a,b)<\min\{m(a,p,\mu_1),m(b,q,\mu_2)\}<0.
\end{align*}
\end{lemma}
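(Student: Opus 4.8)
The plan is to estimate $e_1(a,b)$ from above by inserting, as a test pair, ground states of the two decoupled scalar Choquard problems, and then to exploit the strict negativity of the attractive cross term $-\frac{\beta}{r}B(u,v,r)$ together with Lemma \ref{lm}(i). The starting observation is that, directly from the definitions of $A$, $B$, $M^p_\mu$ and $E^1_{\mu_1,\mu_2,\beta}$, the energy splits on $S_a\times S_b$ as
$$E^1_{\mu_1,\mu_2,\beta}(u,v)=M^p_{\mu_1}(u)+M^q_{\mu_2}(v)-\frac{\beta}{r}B(u,v,r).$$

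Since $p,q\in(1+\frac{\alpha}{N},1+\frac{\alpha+2s}{N})$, the scalar functionals $M^p_{\mu_1}$ on $S_a$ and $M^q_{\mu_2}$ on $S_b$ are bounded below (as in Lemma \ref{lebfb}) and the infima $m(a,p,\mu_1)$, $m(b,q,\mu_2)$ are attained; this is the usual concentration--compactness argument for the $L^2$-subcritical Choquard equation, whose decisive ingredient is the strict subadditivity of Lemma \ref{lm}(i).2 (see also Lemma \ref{lregularity}). Let $u_a\in S_a$ and $v_b\in S_b$ realize these infima. Testing the energy with $(u_a,v_b)$ gives
$$e_1(a,b)\le E^1_{\mu_1,\mu_2,\beta}(u_a,v_b)=m(a,p,\mu_1)+m(b,q,\mu_2)-\frac{\beta}{r}B(u_a,v_b,r).$$
As $a,b>0$, neither $u_a$ nor $v_b$ vanishes a.e.; since $I_\alpha(x)=|x|^{\alpha-N}>0$, the convolution $I_\alpha\star|u_a|^r$ is positive a.e., so $B(u_a,v_b,r)=\int_{\mathbb{R}^N}(I_\alpha\star|u_a|^r)|v_b|^r>0$ (and finite, by Hardy--Littlewood--Sobolev, Lemma \ref{lhls}). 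Because $\beta>0$, this upgrades the bound to the \emph{strict} inequality $e_1(a,b)<m(a,p,\mu_1)+m(b,q,\mu_2)$. Finally $m(a,p,\mu_1)<0$ and $m(b,q,\mu_2)<0$ by Lemma \ref{lm}(i).1, hence
$$e_1(a,b)<m(a,p,\mu_1)+m(b,q,\mu_2)<\min\{m(a,p,\mu_1),m(b,q,\mu_2)\}<0,$$
which is the claim. Note that this chain of inequalities does not in fact use the hypothesis that $e_1(a,b)$ is attained; that assumption is recorded only because the lemma is applied at a stage where a minimizer of $e_1$ is already in hand.

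The only genuinely nonroutine point is the attainment of the scalar minimization problems $m(a,p,\mu_1)$ and $m(b,q,\mu_2)$, which is exactly what makes the cross-term estimate strict rather than merely non-strict: if one instead worked with scalar minimizing sequences, one would have to establish a non-vanishing (compactness) statement ensuring that $B(u_a,v_b,r)$ stays bounded away from $0$ in the limit. Everything else reduces to the elementary observation that the Choquard cross term is strictly positive together with the positivity of $\beta$.
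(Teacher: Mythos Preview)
Your proof is correct and follows the same approach as the paper. One minor simplification: the paper does not invoke attainment of the scalar infima but simply records the weak inequality $e_1(a,b)\le m(a,p,\mu_1)+m(b,q,\mu_2)$ (obtained by dropping the nonnegative cross term for arbitrary test pairs) and then extracts the strict inequality with $\min\{m(a,p,\mu_1),m(b,q,\mu_2)\}$ directly from the negativity of both scalar levels given by Lemma~\ref{lm}(i), so your appeal to scalar minimizers $u_a,v_b$ is unnecessary, though harmless.
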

\begin{proof}
Suppose that $e_1(a,b)$ can be arrived by $(u,v)$. Then
 $$(u,v)\in S_a\times S_b,E^1_{\mu_1,\mu_2,\beta}(u,v)=e_1(a,b).$$ Since $\beta>0$, we can deduce that
$$e_1(a,b)\le m(a,\mu_1,p)+m(b,\mu_2,q),$$
where $m(a,\mu_1,p)$ and $m(b,\mu_2,q)$ are defined in \eqref{mcnc}. In addition, by Lemma \ref{lm}, we have $m(a,\mu_1,p)<0$ and $m(b,\mu_2,q)<0$. Hence $e_1(a,b)<0$ and the lemma holds.
\end{proof}

Having arrived at the end of this subsection, we give the proof of Theorem \ref{t1}.

\begin{proof}[\textbf{Proof of Theorem \ref{t1}}]
By Remark \ref{r1}, there exist a nonnegative and radial symmetry minimizing sequence $\{(u_n,v_n)\}\subset S_a\times S_b$  and $(u,v)\in H_{r}^s(\mathbb{R}^N)\times H_{r}^s(\mathbb{R}^N)$ with $u\ge0,v\ge0$ such that $e_1(a,b)=\lim\limits_{n\rightarrow\infty}E^1_{\mu_1,\mu_2,\beta}(u_n,v_n)$ and $(u_n,v_n)\rightharpoonup(u,v)$ weakly in $H_{r}^{s}(\mathbb{R}^N)\times H_{r}^{s}(\mathbb{R}^N)$.

In order to complete the proof, we give the following claim.\\
\textbf{Claim 1:} If $(u,v)\in H_{r}^{s}(\mathbb{R}^N)\times H_{r}^{s}(\mathbb{R}^N)$ is a solution of system \eqref{system} with $u\ge0, u\not\equiv 0$ and $v\ge0$, then $\lambda_1>0$. If $(u,v)\in H_{r}^{s}(\mathbb{R}^N)\times H_{r}^{s}(\mathbb{R}^N)$ is a solution of system \eqref{system} with $u\ge0, v\ge0$ and $v\not\equiv0$, then $\lambda_2>0$.

We argue it by contradiction. Indeed, we may assume that $\lambda_1\le0$. From $u\ge0, u\not\equiv0$ and $v\ge0$, it follows that
\begin{align*}
(-\Delta)^s u=-\lambda_1 u+\mu_1(I_{\alpha}\star |u|^p)|u|^{p-2}u+\beta(I_{\alpha}\star |v|^r)|u|^{r-2}u\ge0.
\end{align*}
Combining with Lemma \ref{llt}, we can derive that $u\equiv0$, which contradicts to $u\not\equiv0$. Hence $\lambda_1>0$ and the other case is similar.

We remain to show $(u_n,v_n)\rightarrow(u,v)$ strongly in $H^{s}(\mathbb{R}^N)\times H^{s}(\mathbb{R}^N)$.
%Indeed, if the case holds, then we have $(u,v)\in S_a\times S_b,\lim\limits_{n\rightarrow\infty}E_1(u_n,v_n)=E_1(u,v)=e_1(a,b)$. Since $\beta>0$, we can deduce that $e_1(a,b)\le m(a,p,\mu_1)+m(b,q,\mu_2)$. In addition , applying Lemma \ref{lm}, we have $m(a,p,\mu_1)<0$ and $m(b,q,\mu_2)<0$. Hence $e_1(a,b)<0$.\\
Inspired by \textbf{Claim 1} and Lemma \ref{lsl},  we need to show that $u\not\equiv0$ and $v\not\equiv0$. It can be argued by contradiction and there are three cases:
\begin{align*}
\left\{
  \begin{array}{ll}
    \textbf{Case 1}: & \hbox{$u\equiv0,v\equiv0$;} \\
    \textbf{Case 2}: & \hbox{$u\not\equiv0,v\equiv0$;} \\
   \textbf{Case 3}: & \hbox{$u\equiv0,v\not\equiv0$.}
  \end{array}
\right.
\end{align*}

For \textbf{Case 1},  by \eqref{gGN}, we have $B(u_n,u_n,p)\rightarrow0,~B(v_n,v_n,q)\rightarrow0$ and $B(u_n,v_n,r)\rightarrow0$. Thus $\lim\limits_{n\rightarrow\infty}E^1_{\mu_1,\mu_2,\beta}(u_n,v_n)=0$, it contradicts to $e_1(a,b)<0$. Hence \textbf{Case 1} does not occur. For \textbf{Case 2}, by \eqref{gGN}, it yields that $B(v_n,v_n,q)\rightarrow0$ and $B(u_n,v_n,r)\rightarrow0$. Therefore,
\begin{align*}
e_1(a,b)=E^1_{\mu_1,\mu_2,\beta}(u,v)=\frac12A(u,v)-\frac{\mu_1}{2p}B(u,u,p)\ge m(a,\mu_1,p),
\end{align*}
which is a contradiction with Lemma \ref{le1}. Therefore, \textbf{Case 2} doesn't hold. Similarly, \textbf{Case 3} doesn't hold. In conclusion, $u\not\equiv0$ and $v\not\equiv0$.

From \textbf{Claim 1}, we can conclude that $\lambda_1>0$ and
$\lambda_2>0$. Therefore, by Lemma \ref{lsl},  $(u_n,v_n)\rightarrow(u,v)$ strongly in $H^{s}(\mathbb{R}^N)\times H^{s}(\mathbb{R}^N)$. Hence we finish the proof.
\end{proof}

\subsection{Proof of Theorem \ref{t11}}
Under the assumptions \textbf{(V2)}, we define
\begin{align}\label{Hss}
H^s_{i}(\mathbb{R}^N)=\{u\in H^s(\mathbb{R}^N):\int_{\mathbb{R}^N}V_{i}(x)|u|^2dx<\infty\}(i=1,2).
\end{align}
Then $H^s_{i}(\mathbb{R}^N)(i=1,2)$ is a  Hilbert space equipped with the norms
$$
\|u\|_{i}=\bigg(\int_{\mathbb{R}^N}(|(-\Delta)^{\frac{s}{2}}u|^2+V_{i}(x)|u|^2)dx\bigg)^{\frac12}(i=1,2).
$$
In addition, the following key compact embedding lemma can be seen in \cite{DTW19}.
\begin{lemma}\label{lce}
The embedding $H^s_1(\mathbb{R}^N)\times H^s_2(\mathbb{R}^N)\hookrightarrow L^{p}(\mathbb{R}^N)\times L^{p}(\mathbb{R}^N)$ is compact for $p\in[2,\frac{2N}{N-2s})$.
\end{lemma}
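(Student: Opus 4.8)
The plan is to reduce the statement to the scalar embedding $H^s_i(\mathbb{R}^N)\hookrightarrow L^p(\mathbb{R}^N)$ (a product of embeddings is compact iff each factor is), and then run the classical two--step argument: local compactness from the fractional Rellich--Kondrachov theorem together with tightness at infinity forced by the divergence of the potential. First I would fix $i\in\{1,2\}$ and take a sequence $\{u_n\}$ bounded in $H^s_i(\mathbb{R}^N)$, say $\|u_n\|_i\le C$. Since $H^s_i(\mathbb{R}^N)$ is a Hilbert space, after passing to a subsequence $u_n\rightharpoonup u$ weakly in $H^s_i(\mathbb{R}^N)$, hence weakly in $H^s(\mathbb{R}^N)$; in particular $\|u\|_{H^s}$ and all $\|u_n\|_{2_s^{*}}$ are bounded by the fractional Sobolev inequality. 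For every fixed $R>0$ the embedding $H^s(B_R)\hookrightarrow L^p(B_R)$ is compact for $p\in[2,2_s^{*})$, so $u_n\to u$ strongly in $L^p(B_R)$.

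The second step is the tail estimate, which is where condition \textbf{(V2)} enters. Given any $M>0$, by $\lim_{|x|\to\infty}V_i(x)=\infty$ we may pick $R_0=R_0(M)$ with $V_i(x)\ge M$ for $|x|\ge R_0$, whence
$$
\int_{|x|\ge R_0}|u_n|^2\,dx\le\frac{1}{M}\int_{|x|\ge R_0}V_i(x)|u_n|^2\,dx\le\frac{C^2}{M}.
$$
If $p=2$ this already gives uniform smallness of the tails. If $2<p<2_s^{*}$, I would write $\frac1p=\frac{\theta}{2}+\frac{1-\theta}{2_s^{*}}$ with $\theta\in(0,1)$ and interpolate:
$$
\Big(\int_{|x|\ge R_0}|u_n|^p\,dx\Big)^{1/p}\le\|u_n\|_{L^2(B_{R_0}^c)}^{\theta}\,\|u_n\|_{L^{2_s^{*}}(B_{R_0}^c)}^{1-\theta}\le\Big(\frac{C^2}{M}\Big)^{\theta/2}\big(\sup_n\|u_n\|_{2_s^{*}}\big)^{1-\theta}.
$$
Thus, given $\varepsilon>0$, choosing $M$ large makes $\|u_n\|_{L^p(B_{R_0}^c)}<\varepsilon$ uniformly in $n$, and the same bound holds for $u$ by weak lower semicontinuity of the norm.

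Finally I would combine the two steps: for the $R_0$ chosen above,
$$
\|u_n-u\|_{L^p(\mathbb{R}^N)}\le\|u_n-u\|_{L^p(B_{R_0})}+\|u_n\|_{L^p(B_{R_0}^c)}+\|u\|_{L^p(B_{R_0}^c)},
$$
the first term tending to $0$ by the local step and the last two being $<\varepsilon$; hence $\limsup_n\|u_n-u\|_{L^p}\le 2\varepsilon$, and since $\varepsilon$ is arbitrary, $u_n\to u$ in $L^p(\mathbb{R}^N)$. A routine subsequence argument shows the original sequence converges, giving compactness. The only point requiring a little care is the interpolation bookkeeping that converts the $L^2$ tail bound (which the potential controls directly) into an $L^p$ tail bound; everything else is the standard ruling-out of vanishing/dichotomy by a confining potential, and it relies only on the fractional Sobolev inequality and local fractional Rellich--Kondrachov compactness.
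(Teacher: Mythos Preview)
Your argument is correct and is precisely the standard route for this kind of compact embedding: local fractional Rellich--Kondrachov plus a uniform tail estimate coming from $V_i(x)\to\infty$, glued together by interpolation between $L^2$ and $L^{2_s^*}$. The paper itself does not give a proof of this lemma at all; it simply quotes it from \cite{DTW19} (Du--Tian--Wang--Zhang), so there is nothing to compare at the level of method --- you have supplied the argument that the paper outsources.

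One small clean-up: your final sentence, ``a routine subsequence argument shows the original sequence converges,'' is unnecessary and, as stated, not quite right. Compactness of the embedding only requires that every bounded sequence have \emph{some} $L^p$-convergent subsequence, and you have already shown exactly that once you passed to a weakly convergent subsequence. The full original sequence need not converge (and in general will not). Simply delete that sentence; the proof is complete without it.
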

\begin{remark}\label{rbunvn}
If there exists a sequence $\{(u_n,v_n)\}\subset H^s_1(\mathbb{R}^N)\times H^s_2(\mathbb{R}^N)$ such that $(u_n,v_n)\rightharpoonup(u,v)$ weakly in $H^s_1(\mathbb{R}^N)\times H^s_2(\mathbb{R}^N)$ for some $(u,v)\in H^s_1(\mathbb{R}^N)\times H^s_2(\mathbb{R}^N)$. By Lemma \ref{lhls} and Lemma \ref{lce} for $p\in(1+\frac{\alpha}{N},\frac{N+\alpha}{N-2s})$, there holds that
\begin{align*}%\label{bun}
B(u_n-u,u_n-u,p)\le C(N,\alpha)\|u_n-u\|^{2p}_{\frac{2Np}{N+\alpha}}\rightarrow0.
\end{align*}
Similarly,
\begin{align*}%\label{bvn}
B(v_n-v,v_n-v,q)\le C(N,\alpha)\|v_n-v\|^{2q}_{\frac{2Nq}{N+\alpha}}\rightarrow0,
\end{align*}
and
\begin{align*}%\label{bunvn}
B(u_n-u,v_n-v,q)\le C(N,\alpha)\|u_n-u\|^{r}_{\frac{2Nr}{N+\alpha}}\|v_n-v\|^{r}_{\frac{2Nr}{N+\alpha}}\rightarrow0,
\end{align*}

\end{remark}
In order to find the normalized solutions for system \eqref{system}, we shall consider the minimizer of the following constrained problem
\begin{align*}
e_2(a,b)=\inf_{(u,v)\in F}E_2(u,v),
\end{align*}
where
$$
E_2(u,v)=E_{\mu_1,\mu_2,\beta}(u,v):H^s_1(\mathbb{R}^N)\times H^s_2(\mathbb{R}^N)\rightarrow\mathbb{R}
$$
and
$F=[H^s_1(\mathbb{R}^N)\times H^s_2(\mathbb{R}^N)]\cap[S_a\times S_b]$.
\begin{proof}[\textbf{Proof of Theorem \ref{t11}}] Similar to Lemma \ref{lebfb}, we can derive that
\begin{align*}
E_2(u,v)\ge&\frac12A(u,v)+\frac12\int_{\mathbb{R}^N}(V_1(x)|u|^2+V_2(x)|v|^2)-\frac{C(a)\mu_1}{2p}\|(-\Delta)^{\frac{s}{2}}u\|_2^{2p\delta_{p}}\\
&-\frac{C(b)\mu_2}{2q}\|(-\Delta)^{\frac{s}{2}}v\|_2^{2q\delta_{q}}-\frac{C(a,b)\beta}{r}\|(-\Delta)^{\frac{s}{2}}u\|_2^{r\delta_{r}}\|(-\Delta)^{\frac{s}{2}}v\|_2^{r\delta_{r}}\\
\ge&\frac12A(u,v)+\frac12\int_{\mathbb{R}^N}(V_1(x)|u|^2+V_2(x)|v|^2)-\frac{C(a)\mu_1}{2p}\|(-\Delta)^{\frac{s}{2}}u\|_2^{2p\delta_{p}}\\
&-\frac{C(b)\mu_2}{2q}\|(-\Delta)^{\frac{s}{2}}v\|_2^{2q\delta_{q}}-\frac{C(a,b)\beta}{2r}\|(-\Delta)^{\frac{s}{2}}u\|_2^{2r\delta_{r}}-\frac{C(a,b)\beta}{2r}\|(-\Delta)^{\frac{s}{2}}v\|_2^{2r\delta_{r}}.
\end{align*}
Since $p,q,r\in(1+\frac{\alpha}{N},1+\frac{\alpha+2s}{N})$, $2p\delta_{p}<2,2q\delta_{q}<2,2r\delta_{r}<2$. Therefore, $E_2(u,v)$ is bounded from below and coercive on the set $F$.

Due to $V_{i}\ge0(i=1,2)$,  similar to Theorem \ref{t1}, there exists a nonnegative minimizing sequence $\{(u_n,v_n)\}\subset F$. Furthermore, there exists a $(u,v)\in H^s_1(\mathbb{R}^N)\times H^s_2(\mathbb{R}^N)$ with $u,v\ge0$ such that $(u_n,v_n)\rightharpoonup(u,v)$ weakly in $H^s_1(\mathbb{R}^N)\times H^s_2(\mathbb{R}^N)$. From Lemma \ref{lce}, it follows that $(u,v)\in S_a\times S_b$, i.e., $(u,v)\in F$. Moreover $E_2(u,v)\ge e_2(a,b)$. By Lemma \ref{lbw} and Remark \ref{rbunvn}, we can obtain that
\begin{align*}
B(u_n,u_n,p)\rightarrow B(u,u,p),~B(v_n,v_n,q)\rightarrow B(v,v,q)\text{~and~}B(u_n,v_n,r)\rightarrow B(u,v,r).
\end{align*}
By Fatou's Lemma, it can be concluded that $e_2(a,b)\ge E_2(u,v)$. Therefore, $e_2(a,b)=E_2(u,v)$. Moreover, $u\ge0$ and $v\ge0$.
\end{proof}
In the following, we consider  $V_{i}(x)(i=1,2)$ satisfies
\begin{align}\label{v3}
1\le V_{i}(x)\in L_{loc}(\mathbb{R}^N)\text{~and~}\lim_{|x|\to\infty}V_{i}(x)=\infty,~~i=1,2.
\end{align}
\begin{lemma}\label{lu0v0}
Suppose that $\max\{2,1+\frac{\alpha}{N}\}<p,q,r<1+\frac{\alpha+2s}{N}$ and  $V_{i}(x)(i=1,2)$ satisfies \eqref{v3}. Let $(u_0,v_0)\in H^{s}(\mathbb{R}^N)\times H^{s}(\mathbb{R}^N)$ be the nonnegative solution for the system \eqref{system}. Then $u_0,v_0\in L^{\infty}(\mathbb{R}^N)$ and $u_0(x),v_0(x)\to 0$ as $|x|\to\infty$.
\end{lemma}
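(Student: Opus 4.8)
The plan is to split the statement into two parts: a global bound $u_0,v_0\in L^{\infty}(\mathbb{R}^N)$, obtained by a bootstrap, and the decay $u_0,v_0\to0$ at infinity, obtained from $V_i\to\infty$. Write the first line of \eqref{system} for the nonnegative solution $u_0$ as
\begin{align*}
(-\Delta)^{s}u_0+V_1(x)u_0+\lambda_1u_0=\mu_1(I_{\alpha}\star u_0^{p})u_0^{p-1}+\beta(I_{\alpha}\star v_0^{r})u_0^{r-1}=:g_1\ge0,
\end{align*}
and similarly for $v_0$ with right-hand side $g_2\ge0$. Since $V_1\ge1\ge0$ and $u_0\ge0$, discarding the good term $V_1u_0$ and moving $-\lambda_1u_0$ to the right gives $(-\Delta)^{s}u_0+u_0\le g_1+(1+|\lambda_1|)u_0=:h_1$ with $h_1\ge0$ (and likewise $h_2$); by the comparison principle for $(-\Delta)^{s}+1$ it then suffices to push $h_1,h_2$ into successively better Lebesgue spaces, and the sign of $\lambda_i$ is irrelevant here.

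For the $L^{\infty}$ bound I run a Moser-type iteration, as for the scalar problem in \cite{DSS15}. One starts from $u_0,v_0\in H^{s}(\mathbb{R}^N)\subset L^{t}(\mathbb{R}^N)$ for $t\in[2,2^{*}_s]$. Given $u_0,v_0\in L^{q}(\mathbb{R}^N)$, the Hardy--Littlewood--Sobolev and weak Young inequalities together with Lemma \ref{lIa} (and its two-function analogue for the cross terms) put all the nonlocal terms of $g_1,g_2$ into $L^{\rho}$ with $\tfrac1\rho=\tfrac{2\bar p-1}{q}-\tfrac\alpha N$, where $\bar p:=\max\{p,q,r\}$; hence $h_1,h_2\in L^{\rho}$, and the order-$2s$ smoothing of $(-\Delta)^{s}+1$ (i.e. $\tfrac1{q_1}=\tfrac1\rho-\tfrac{2s}N$) upgrades $u_0,v_0$ to $L^{q_1}$. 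The integrability gain is
\begin{align*}
\frac1q-\frac1{q_1}=\frac{\alpha+2s}{N}-\frac{2(\bar p-1)}{q}\ \ge\ \frac{\alpha+2s-(\bar p-1)(N-2s)}{N}\ >\ 0,
\end{align*}
where the lower bound (the value at $q=2^{*}_s$, and a lower bound throughout since $q$ only increases) is strictly positive precisely because of the $L^{2}$-subcriticality $\bar p<\tfrac{N+\alpha}{N-2s}$. So the integrability grows by a fixed amount at each step (once the exponent leaves the admissible range of Lemma \ref{lIa} the nonlocal terms are already in $L^{\infty}$), and after finitely many steps $u_0,v_0\in L^{\infty}(\mathbb{R}^N)$.

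For the decay, note first that $p,q,r\ge2$ and $u_0,v_0\in L^{2}\cap L^{\infty}$ give $u_0^{p},v_0^{q},v_0^{r}\in L^{1}\cap L^{\infty}$ (e.g. $\int u_0^{p}\le\|u_0\|_{\infty}^{p-2}\|u_0\|_{2}^{2}$). Splitting $(I_{\alpha}\star u_0^{p})(x)$ at $|y|=|x|/2$ — on $\{|y|\le|x|/2\}$ using $|x-y|\ge|x|/2$ with $\|u_0^{p}\|_{1}$, on $\{|y|>|x|/2\}$ using H\"older with $I_{\alpha}$ locally in $L^{\tau'}$ ($\tau>N/\alpha$) and the vanishing tail $\|u_0^{p}\|_{L^{\tau}(|y|>|x|/2)}$ — shows $(I_{\alpha}\star u_0^{p})(x)\to0$ as $|x|\to\infty$, and likewise for $v_0^{q},v_0^{r}$. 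Hence the coefficient $c_1(x):=\mu_1(I_{\alpha}\star u_0^{p})(x)u_0(x)^{p-2}+\beta(I_{\alpha}\star v_0^{r})(x)u_0(x)^{r-2}$ tends to $0$ (using $u_0\in L^{\infty}$ and $p-2,r-2\ge0$), so $u_0$ solves $(-\Delta)^{s}u_0+W_1(x)u_0=0$ with $W_1:=V_1+\lambda_1-c_1\to+\infty$; in particular $W_1\ge0$ outside some ball $B_{R_0}$, and similarly for $v_0$.

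To conclude, suppose $u_0$ does not vanish at infinity: there are $\varepsilon_0>0$ and $x_n$ with $|x_n|\to\infty$ and $\operatorname{ess\,sup}_{B_1(x_n)}u_0\ge\varepsilon_0$. Put $u_n:=u_0(\cdot+x_n)$, $v_n:=v_0(\cdot+x_n)$, which are bounded in $H^{s}(\mathbb{R}^N)\cap L^{\infty}(\mathbb{R}^N)$. From $\int_{\mathbb{R}^N}V_1u_0^{2}<\infty$ (the solution lies in $H^{s}_1\times H^{s}_2$) and $\operatorname{ess\,inf}_{B_1(x_n)}V_1\to\infty$ we get $\|u_n\|_{L^{2}(B_1)}^{2}\le(\operatorname{ess\,inf}_{B_1(x_n)}V_1)^{-1}\int_{B_1(x_n)}V_1u_0^{2}\to0$; and $\int_{|y|\ge2}|y|^{-N-2s}u_n(y)\,dy\to0$ by splitting at $|y|=M$ ($\|u_0\|_{\infty}$ controls the far part, $\|u_0\|_{L^{2}(B_M(x_n))}\to0$ the near part). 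For $n$ large, $B_2(x_n)\subset B_{R_0}^{c}$, so $(-\Delta)^{s}u_n=-W_1(\cdot+x_n)u_n\le0$ in $B_2$, i.e. $u_n$ is a nonnegative fractional subsolution there; the local boundedness estimate for fractional subsolutions then gives $\operatorname{ess\,sup}_{B_1}u_n\lesssim\|u_n\|_{L^{2}(B_2)}+\mathrm{Tail}(u_n)\to0$, contradicting $\operatorname{ess\,sup}_{B_1(x_n)}u_0\ge\varepsilon_0$; the same runs for $v_0$. The hard part is precisely this last step: since $V_i$ is only in $L^{1}_{\mathrm{loc}}$, the term $V_iu_0$ cannot be fed into an elliptic estimate directly, so one must exploit the sign of $W_1$ to make the translates genuinely fractional-subharmonic on $B_2$, after which only the vanishing quantities $\|u_n\|_{L^{2}(B_2)}$ and $\mathrm{Tail}(u_n)$ enter.
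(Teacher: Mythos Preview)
Your argument is correct, but the route differs from the paper's, most markedly in the decay step.  For $L^\infty$, both you and the paper compare $u_0$ with the Bessel resolvent of the right-hand side after dropping the potential term; the paper, however, does \emph{not} iterate --- it verifies in a single step that $f_1,f_2\in L^{\rho}$ for some $\rho>N/(2s)$ (tacitly using the extra restrictions $N<4s$ and $\alpha>\tfrac{N^{2}-8s^{2}}{4s-N}$ from Theorem~\ref{t12}, which force $\min\{t(p),t(q),t(r),2\}>N/(2s)$), whence $\hat u=\mathcal{K}\star f_1\in L^{\infty}$ directly.  Your bootstrap needs only the HLS-subcriticality $\bar p<\tfrac{N+\alpha}{N-2s}$, so it is more robust; just note that at each step the linear piece $(1+|\lambda_1|)u_0$ of $h_1$ contributes its own gain $2s/N$, which you should take into account along with the nonlocal gain.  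For the decay, the paper keeps the same comparison: from $(-\Delta)^{s}(\hat u-u_0)+(\hat u-u_0)=(V_1-1)u_0\ge0$ one has $0\le u_0\le\hat u$, and since $\hat u\in L^{\Theta}(\mathbb{R}^N)\cap C^{0,\eta}(\mathbb{R}^N)$ (Bessel-kernel regularity) it follows at once that $\hat u\to0$, hence $u_0\to0$.  This is shorter than your subsolution/translation argument, which relies on a nonlocal local-boundedness (De~Giorgi) estimate not otherwise invoked in the paper; conversely, your approach avoids the somewhat delicate appeal to H\"older regularity of $\hat u$.  Finally, you use $\int V_1u_0^{2}<\infty$ to get $\|u_0\|_{L^{2}(B_M(x_n))}\to0$, but this hypothesis is not in the lemma's statement; fortunately $u_0\in L^{2}(\mathbb{R}^N)$ alone already gives the same vanishing, so nothing extra is needed there.
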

\begin{proof}
Let $(\hat{u},\hat{v})\in H^{s}(\mathbb{R}^N)\times H^{s}(\mathbb{R}^N)$ be the solution of the following coupled system
\begin{align}\label{fs1}
\left\{\begin{aligned}
&(-\Delta)^s u+u=-\lambda_1u_0+\mu_1(I_{\alpha}\star |u_0|^p)|u_0|^{p-2}u_0+\beta(I_{\alpha}\star |v_0|^r)|u_0|^{r-2}u_0:=f_1(x),\\
&(-\Delta)^s v+v=-\lambda_2v_0+\mu_2(I_{\alpha}\star |v_0|^q)|v_0|^{q-2}v_0+\beta(I_{\alpha}\star |u_0|^r)|v_0|^{r-2}v_0:=f_2(x).
\end{aligned}
\right.
\end{align}
By Lemma \ref{lhls}, we have $\hat{u}\in L^{\chi(p)}(\mathbb{R}^N)\cap L^{\chi(r)}(\mathbb{R}^N)$ and $\hat{v}\in L^{\chi(q)}(\mathbb{R}^N)\cap L^{\chi(r)}(\mathbb{R}^N)$ for $\chi(p)=\frac{2Np}{N+\alpha}$. Moreover,
$$
\frac{N(2p-1)}{N+\alpha}\le \chi(p)<\frac{Np}{\alpha},~\frac{N(2r-1)}{N+\alpha}\le \chi(r)<\frac{Nr}{\alpha}\text{~and~}\frac{N(2q-1)}{N+\alpha}\le \chi(q)<\frac{Nq}{\alpha}.
$$
From Lemma \ref{lIa}, we can derive that
\begin{align*}
\left\{\begin{aligned}
&(I_{\alpha}\star |u_0|^p)|u_0|^{p-2}u_0\in L^{t(p)}(\mathbb{R}^N),\\
&(I_{\alpha}\star |v_0|^r)|u_0|^{r-2}u_0\in L^{t(r)}(\mathbb{R}^N),\\
&(I_{\alpha}\star |v_0|^q)|v_0|^{q-2}v_0\in L^{t(q)}(\mathbb{R}^N),
\end{aligned}
\right.
\end{align*}
where $\frac{1}{t(p)}=\frac{2p-1}{\chi(p)}-\frac{\alpha}{N}$.
Direct computation shows that
\begin{align*}
&\min\{t(p),t(q),t(r),2\}>\frac{N}{2s}.
\end{align*}
Denote that
$$
\Theta_0=\max\{t(p),t(q),t(r),2\}.
$$
Let $\mathcal{K}$ be the Bessel kernel
$$
\mathcal{K}(x)=\mathcal{F}^{-1}\bigg(\frac{1}{1+|\xi|^{2s}}\bigg),
$$
where $\mathcal{F}^{-1}g(\xi)$ stand for the inverse  Fourier transform of $g(\xi)$. Then
$$
\hat{u}(x)=(\mathcal{K}\star f_1)(x)=\int_{\mathbb{R}^N}\mathcal{K}(x-y)f_1(y)dy,~\hat{v}(x)=(\mathcal{K}\star f_2)(x)=\int_{\mathbb{R}^N}\mathcal{K}(x-y)f_2(y)dy.
$$
From Lemma 5.2 in \cite{FQT12}, it yields that $\hat{u},\hat{v}\in L^{\Theta}(\mathbb{R}^N)\cap L^{\infty}(\mathbb{R}^N)$ for any $\Theta>\Theta_0$. Hence the solutions for the coupled system \eqref{fs1} is well-defined. Therefore,
\begin{align*}
\left\{\begin{aligned}
&(-\Delta)^{s}(\hat{u}-u_0)+(\hat{u}-u_0)=(V_1(x)-1)u_0\ge0,\\
&(-\Delta)^{s}(\hat{v}-v_0)+(\hat{v}-v_0)=(V_2(x)-1)v_0\ge0.
\end{aligned}
\right.
\end{align*}
It is easy to observe that $0\le u_0\le \hat{u}$ and $0\le v_0\le \hat{v}$.

By Lemma \ref{lregularity}, there exists a constant $\eta\in(0,1)$ such that $\hat{u},\hat{v}\in C^{0,\eta}(\mathbb{R}^N)$. Furthermore, the fact that $\hat{u},\hat{v}\in L^{\Theta}(\mathbb{R}^N)\cap C^{0,\eta}(\mathbb{R}^N)$ implies that
$\hat{u}(x),\hat{v}(x)\to0$ as $|x|\to\infty$.  Therefore,  $u_0,v_0\in L^{\infty}(\mathbb{R}^N)$ and $u_0(x),v_0(x)\to0$ as $|x|\to\infty$.

\end{proof}
\begin{lemma}\label{ldecay}
Under the assumptions of Lemma \ref{lu0v0}, let $(u_0,v_0)\in H^{s}(\mathbb{R}^N)\times H^{s}(\mathbb{R}^N)$ be the nonnegative solution for the system \eqref{system}. Then there exists a constant $C>0$ such that
$$
u_0,v_0\le \frac{C}{|x|^{N+2s}},~|x|\ge1.
$$
\end{lemma}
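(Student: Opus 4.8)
The plan is to derive the decay by a comparison argument in the exterior of a large ball, taking the Bessel kernel $\mathcal{K}$ from the proof of Lemma~\ref{lu0v0} as a barrier; the divergence $V_i(x)\to\infty$ will supply the zeroth–order term needed to dominate the nonlinearity. First I would convert the system into a clean differential inequality for $u_0$ (symmetrically for $v_0$). By Lemma~\ref{lu0v0} we already know $u_0,v_0\in L^{\infty}(\mathbb{R}^N)$, $u_0,v_0\ge 0$ and $u_0(x),v_0(x)\to 0$ as $|x|\to\infty$; since $u_0\in S_a$, $v_0\in S_b$ also $u_0,v_0\in L^{2}(\mathbb{R}^N)$, hence $u_0,v_0\in L^{t}(\mathbb{R}^N)$ for every $t\in[2,\infty]$, and because $p,q,r>2$ and $\alpha<N$ one has $\frac{Np}{\alpha},\frac{Nr}{\alpha},\frac{Nq}{\alpha}>2$, so the weak Young inequality gives $I_{\alpha}\star|u_0|^{p},\,I_{\alpha}\star|v_0|^{r}\in L^{\infty}(\mathbb{R}^N)$. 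Using $|u_0|^{p-2}u_0\le\|u_0\|_\infty^{p-2}u_0$ and $|u_0|^{r-2}u_0\le\|u_0\|_\infty^{r-2}u_0$, the right-hand side of the first equation of \eqref{system} is nonnegative and bounded by $C_0u_0$ for a fixed constant $C_0>0$. Since $V_1(x)+\lambda_1\to\infty$, there is $R_0\ge 1$ with $V_1(x)+\lambda_1-C_0\ge 1$ for $|x|\ge R_0$, and then
\begin{align*}
(-\Delta)^{s}u_0+u_0\ \le\ (-\Delta)^{s}u_0+(V_1+\lambda_1-C_0)u_0\ =\ \big(\text{right-hand side}\big)-C_0u_0\ \le\ 0\qquad\text{for }|x|\ge R_0,
\end{align*}
and, after possibly enlarging $R_0$, likewise $(-\Delta)^{s}v_0+v_0\le 0$ for $|x|\ge R_0$.

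Next I would run the comparison. Recall that $\mathcal{K}=\mathcal{F}^{-1}\big((1+|\xi|^{2s})^{-1}\big)$ is positive, radially decreasing, continuous on $\mathbb{R}^N\setminus\{0\}$, satisfies $((-\Delta)^{s}+1)\mathcal{K}=\delta_0$, and --- the one substantive input --- obeys $\mathcal{K}(x)\le C_{*}(1+|x|)^{-(N+2s)}$ for $x\neq 0$: in contrast to the case $s=1$, the fractional Bessel kernel decays only polynomially, a fact one may quote from the literature or obtain from the subordination formula $\mathcal{K}=\int_0^{\infty}e^{-t}p^{(s)}_t\,dt$ together with the heat–kernel bound $p^{(s)}_t(x)\le C\,t\,|x|^{-(N+2s)}$. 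Choose $A>0$ so large that $A\mathcal{K}\ge\|u_0\|_\infty+\|v_0\|_\infty$ on $\overline{B_{R_0}}$, which is possible since $\inf_{\overline{B_{R_0}}}\mathcal{K}>0$. Then $w:=u_0-A\mathcal{K}$ satisfies $(-\Delta)^{s}w+w\le 0$ in $\Omega:=\{|x|>R_0\}$ (because $(-\Delta)^{s}\mathcal{K}+\mathcal{K}=0$ there), $w\le 0$ on $\mathbb{R}^N\setminus\Omega$, and $w(x)\to 0$ as $|x|\to\infty$. The maximum principle for $(-\Delta)^{s}+1$ then forces $w\le 0$ on $\mathbb{R}^N$: otherwise $\sup_{\Omega}w>0$ would be attained at some $x_0\in\Omega$, where $(-\Delta)^{s}w(x_0)=C_{N,s}\,\mathrm{P.V.}\!\int_{\mathbb{R}^N}\big(w(x_0)-w(y)\big)|x_0-y|^{-(N+2s)}\,dy>0$ while $w(x_0)>0$, contradicting $(-\Delta)^{s}w(x_0)+w(x_0)\le 0$. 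Hence $u_0\le A\mathcal{K}$, and the identical argument gives $v_0\le A\mathcal{K}$; combined with the kernel bound this yields $u_0(x),v_0(x)\le C|x|^{-(N+2s)}$ for $|x|\ge 1$.

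I expect the decay bound $\mathcal{K}(x)\le C_{*}|x|^{-(N+2s)}$ at infinity to be the only genuinely nontrivial ingredient; everything else is bookkeeping. If one prefers to be self-contained at this point, an alternative is to replace $\mathcal{K}$ by the explicit barrier $\Psi(x)=\min\{1,|x|^{-(N+2s)}\}$ and check directly --- splitting the defining singular integral into the near region $\{|y-x|<|x|/2\}$, where one controls $D^{2}\Psi$, and the far region, where the dominant contribution comes from $y$ near the origin --- that $(-\Delta)^{s}\Psi(x)\ge -c_{*}\Psi(x)$ for $|x|\ge 1$; since $V_1(x)\to\infty$ one may then arrange $(-\Delta)^{s}u_0+Mu_0\le 0$ for $|x|$ large with $M\ge c_{*}$, so that $\Psi$ becomes a supersolution and the comparison with $A\Psi$ proceeds exactly as above. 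Either route produces the stated estimate, which in turn is what feeds the H\"older continuity and positivity assertions of Theorems~\ref{t12} and~\ref{t21}.
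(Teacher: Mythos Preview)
Your argument is essentially the paper's: both derive $(-\Delta)^{s}u_0+c\,u_0\le 0$ in an exterior domain (using $V_1\to\infty$ to dominate the bounded nonlinear contributions, which you correctly bound by $C_0u_0$ via the weak Young inequality) and then compare with a positive barrier satisfying $(-\Delta)^{s}w+c\,w=0$ there and decaying like $|x|^{-(N+2s)}$ --- the paper takes this barrier from \cite[Lemma~4.3]{FQT12}, you take the Bessel kernel $\mathcal{K}$ directly. The one place where the paper is more careful is the maximum-principle step: evaluating $(-\Delta)^{s}w(x_0)$ pointwise via the singular integral at the putative maximum $x_0$ presupposes local regularity of $u_0$ that has not yet been established at this stage; the paper circumvents this by first observing $(-\Delta)^{s}u_0\le 0$ for $|x|\ge R_0$, invoking \cite[Proposition~2.15]{S07} to obtain upper semicontinuity of $u_0$, and then applying the weak comparison principle \cite[Proposition~2.17]{S07}, which is formulated for semicontinuous sub/supersolutions and does not require pointwise evaluation of the operator.
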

\begin{proof}
By the above lemma, we know that $u_0, v_0$ are bounded and $u_0(x)\to0,v_0(x)\to0$ as $|x|\to\infty$. We rewrite \eqref{system} as
\begin{align*}
\left\{\begin{aligned}
&(-\Delta)^s u_0+\frac12u_0=(\frac12-\lambda_1+V_1(x)+\beta(I_{\alpha}\star |v_0|^r)|u_0|^{r-2})u_0+\mu_1(I_{\alpha}\star |u_0|^p)|u_0|^{p-2}u_0,\\
&(-\Delta)^s v_0+\frac12v_0=(\frac12-\lambda_2+V_2(x)++\beta(I_{\alpha}\star |u_0|^r)|v_0|^{r-2})v_0+\mu_2(I_{\alpha}\star |v_0|^q)|v_0|^{q-2}u_0.
\end{aligned}
\right.
\end{align*}
From the facts $V_{i}(x)\to0(i=1,2),u_0(x)\to0,v_0(x)\to0$ as $|x|\to\infty$ and $u_0\ge0,v_0\ge0$, we can  conclude that there exists a constant $R_0>0$ large enough such that
\begin{align}\label{uvle0}
\left\{\begin{aligned}&(-\Delta)^s u_0\le0,\\
&(-\Delta)^s v_0\le0,
\end{aligned}
\right. ~~|x|\ge R_0,
\end{align}
and
\begin{align*}
\left\{\begin{aligned}&(-\Delta)^s u_0+\frac12u_0\ge0,\\
&(-\Delta)^s v_0+\frac12v_0\ge0,
\end{aligned}
\right. ~~|x|\ge R_0+1.
\end{align*}

By Lemma 4.3 in \cite{FQT12}, there is a continuous functions $w$ in $R^N$ satisfying
$$
(-\Delta)^s w(x)+\frac{1}{2} w(x)=0 \quad \text {~if~}|x|>1
$$
in the classical sense, and
$$
0<w(x) \le \frac{c_1}{|x|^{N+2 \alpha}}
$$
for an appropriate $c_1>0$.

Since $u_0,v_0\in L^{\infty}(\mathbb{R}^N)$ and $w(x)>0$, we can choose a constant $C>0$ large enough such that
\begin{align*}
\left\{\begin{aligned}
&w_1(x)=Cw(x)-u_0>0\\
&w_2(x)=Cw(x)-v_0>0
\end{aligned}
\right.\quad\text{if~}|x|\le R_0+1.
\end{align*}
From \eqref{uvle0} and Proposition 2.15 in \cite{S07}, it can be obtained that $u_0,v_0$ are upper-continuous functions for $|x|>R_0$. Therefore, $w_1,w_2$ are lower-continuous functions for $|x|\ge R_0+1$ and
$$
(-\Delta)^s w_{i}(x)+\frac{1}{2} w_{i}(x)\ge0 \quad \text {~if~}|x|>R_0+1.
$$

In the following, we claim that $w_{i}(x)\ge0(i=1,2)$ for any $x\in\mathbb{R}^N$. It can be argued by contradiction and we may assume that $w_1(x)$ is negative in some points, denote that $\Omega_1=\{x\in\mathbb{R}^N:w_1(x)<0\}$. It is easy to find that $\bar\Omega_1\subset\{x\in\mathbb{R}^N:|x|>R_0+1\}$ and
\begin{align*}
\left\{\begin{aligned}
&(-\Delta)^s w_1(x)\ge0, &x\in\Omega_1,\\
&w_1(x)\ge0,&x\in\Omega^{c}_1.
\end{aligned}
\right.
\end{align*}
Since $w_1(x)$ is lower-continuous function in $\bar\Omega_1$, by Proposition 2.17 in \cite{S07}, we conclude that $w_1\ge0$ for any $x\in\mathbb{R}^N$, which implies a contradiction. By the same way, we can obtain the claim for $w_2(x)$. Therefore,
$$
u_0(x),v_0(x)\le Cw(x)\le\frac{Cc_1}{|x|^{N+2s}},~~|x|\ge1.
$$
Hence the lemma holds.
\end{proof}
\begin{proof}[\textbf{Proof of Theorem \ref{t12}}] Let $(u_0,v_0)$ be the nonnegative solution obtained in Theorem \ref{t11}.
By Lemma \ref{ldecay}, we have
$$
u_0,v_0\le \frac{C}{|x|^{N+2s}},~|x|\ge1,
$$
In addition, $(u_0,v_0)$ satisfies
\begin{align*}
\left\{\begin{aligned}
&(-\Delta)^s u_0+u_0=f_1(x),\\
&(-\Delta)^s v_0+v_0=f_2(x),
\end{aligned}
\right.
\end{align*}
where
$$
f_1(x)=(1-V_1(x)-\lambda_1)u_0+\mu_1(I_{\alpha}\star |u_0|^p)|u_0|^{p-2}u_0+\beta(I_{\alpha}\star |v_0|^r)|u_0|^{r-2}u_0,
$$
and
$$
f_2(x)=
(1-V_2(x)-\lambda_2)v_0+\mu_2(I_{\alpha}\star |v_0|^q)|v_0|^{q-2}v_0+\beta(I_{\alpha}\star |u_0|^r)|v_0|^{r-2}v_0.
$$

By the decay estimate of $u_0,v_0$ and the assumptions of $V_{i}(x)(i=1,2)$, there holds that $f_{i}(x)\in L^{p}(\mathbb{R}^N)\cap L^{\infty}(\mathbb{R}^N)(i=1,2)$ for any $p>\frac{N}{N+2s-\theta}$. By Theorem 3.4 in \cite{FQT12}, there exists a constant $\alpha\in(0,1)$ such that $u_0,v_0\in C^{0,\alpha}(\mathbb{R}^N)$. Combining with $u_0,v_0\ge0$, we deduce that $u_0$ and $v_0$ are positive. Consequently, the theorem is proved.

\end{proof}

\section{$L^2$-critical}
In section, we consider the existence of normalized solutions for $L^2$-critical case $p=q=r=1+\frac{\alpha+2s}{N}$. For seeking normalized solutions, we consider the following minimization problem on the constraint
\begin{align}\label{p3}
e_3(a,b)=\inf_{(u,v)\in F}E_{\mu_1,\mu_2,\beta}(u,v),~ F=[H^s_1(\mathbb{R}^N)\times H^s_2(\mathbb{R}^N)]\cap[S_a\times S_b]
\end{align}
We start with introducing the following auxiliary minimization problem
\begin{align}\label{o}
\mathcal{O}(\mu_1,\mu_2,\beta)=\inf_{(u,v)\in S_a\times S_b}\frac{A(u,v)}{\frac{\mu_1}{p}B(u,u,p)+\frac{\mu_2}{p}B(v,v,p)+\frac{2\beta}{p}B(u,v,p)}.
\end{align}
By analyzing \eqref{o}, the criteria about the existence of minimizer for \eqref{p3} can be given.
\begin{proposition}\label{propo1}
Let $\mu_1,\mu_2,\beta>0$. Then
\begin{itemize}
  \item [(i)] \eqref{p3} has at least one minimizer if $\mathcal{O}(\mu_1,\mu_2,\beta)>1$,
  \item [(ii)] \eqref{p3} has no minimizer if $\mathcal{O}(\mu_1,\mu_2,\beta)<1$.
\end{itemize}
\end{proposition}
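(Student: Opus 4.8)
The plan is to interpret the threshold $\mathcal O(\mu_1,\mu_2,\beta)=1$ as exactly the borderline between the energy being coercive on $F$ — in which case a minimizer of \eqref{p3} is produced by the compactness supplied by the confining potentials — and the energy being driven to $-\infty$ along an $L^2$-preserving dilation — in which case no minimizer can exist. Throughout I abbreviate $D(u,v):=\frac{\mu_1}{p}B(u,u,p)+\frac{\mu_2}{p}B(v,v,p)+\frac{2\beta}{p}B(u,v,p)$, so that $E_{\mu_1,\mu_2,\beta}(u,v)=\frac12A(u,v)+\frac12\int_{\mathbb R^N}(V_1|u|^2+V_2|v|^2)-\frac12D(u,v)$ and $\mathcal O(\mu_1,\mu_2,\beta)=\inf_{S_a\times S_b}A(u,v)/D(u,v)$; here $D>0$ on $S_a\times S_b$, and the critical Gagliardo--Nirenberg inequality (Remark \ref{rpi}) together with Lemma \ref{lhls} give $D(u,v)\le C(a,b,\mu_1,\mu_2,\beta)A(u,v)$ there, so $\mathcal O\in(0,\infty)$ and both stated hypotheses are meaningful.

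For part (i), assuming $\mathcal O>1$, the definition of $\mathcal O$ yields $D(u,v)\le\mathcal O^{-1}A(u,v)$ for every $(u,v)\in S_a\times S_b\supseteq F$, hence $E_{\mu_1,\mu_2,\beta}(u,v)\ge\frac12(1-\mathcal O^{-1})A(u,v)+\frac12\int_{\mathbb R^N}(V_1|u|^2+V_2|v|^2)$ on $F$; since $1-\mathcal O^{-1}>0$ this shows $E_{\mu_1,\mu_2,\beta}\ge0$ and is coercive on $F$ in the norm of $H^s_1(\mathbb R^N)\times H^s_2(\mathbb R^N)$. Then I would take a minimizing sequence $\{(u_n,v_n)\}\subset F$, which is bounded there, and extract a weak limit $(u,v)$. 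The decisive step is Lemma \ref{lce}: up to a subsequence $(u_n,v_n)\to(u,v)$ strongly in $L^q(\mathbb R^N)\times L^q(\mathbb R^N)$ for all $q\in[2,\frac{2N}{N-2s})$, so $\|u\|_2=a$, $\|v\|_2=b$, i.e. $(u,v)\in F$; and since $\frac{2Np}{N+\alpha}=2+\frac{4s}{N+\alpha}$ lies in that range for $p=1+\frac{\alpha+2s}{N}$, the strong $L^{\frac{2Np}{N+\alpha}}$ convergence and Lemma \ref{lhls} give $D(u_n,v_n)\to D(u,v)$. Finally, lower semicontinuity of $\|(-\Delta)^{\frac s2}\cdot\|_2^2$ under weak convergence and Fatou's lemma for the nonnegative potential terms give $E_{\mu_1,\mu_2,\beta}(u,v)\le\liminf_nE_{\mu_1,\mu_2,\beta}(u_n,v_n)=e_3(a,b)$; since $(u,v)\in F$ this forces equality, and $(u,v)$ is the desired minimizer.

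For part (ii), assuming $\mathcal O<1$, I would first pick $(u_0,v_0)\in S_a\times S_b$ with $A(u_0,v_0)<D(u_0,v_0)$, and — using continuity of $A$ and $D$ on $H^s(\mathbb R^N)\times H^s(\mathbb R^N)$, density of $C_c^\infty$, and renormalization of the $L^2$ norms — arrange that $u_0,v_0$ have compact support while keeping $A(u_0,v_0)<D(u_0,v_0)$; since $V_i\in L^\infty_{loc}(\mathbb R^N)$ this gives $(u_0,v_0)\in F$. Next I use the mass-critical dilation $u_t(x)=t^{N/2}u_0(tx)$, $v_t(x)=t^{N/2}v_0(tx)$, which stays in $F$ for $t>0$: a change of variables gives $A(u_t,v_t)=t^{2s}A(u_0,v_0)$, and since $N(p-1)-\alpha=2s$ at $p=1+\frac{\alpha+2s}{N}$ also $D(u_t,v_t)=t^{2s}D(u_0,v_0)$, while the supports shrink into a fixed ball for $t\ge1$, so $\int_{\mathbb R^N}(V_1|u_t|^2+V_2|v_t|^2)\le M$ uniformly. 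Hence $e_3(a,b)\le E_{\mu_1,\mu_2,\beta}(u_t,v_t)=\frac12t^{2s}(A(u_0,v_0)-D(u_0,v_0))+O(1)\to-\infty$ as $t\to\infty$, so $e_3(a,b)=-\infty$ and \eqref{p3} has no minimizer.

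The step I expect to be the main obstacle is the strong-convergence part of (i): the whole argument relies on Lemma \ref{lce} to simultaneously recover the mass constraint in the limit and to pass to the limit in the three nonlocal terms, and it is precisely here that the hypothesis \textbf{(V2)} is indispensable — without the confining potentials a minimizing sequence could vanish or split and the infimum need not be attained. The dilation computation in (ii) is routine once one observes that $p=1+\frac{\alpha+2s}{N}$ makes every nonlocal term scale exactly like the kinetic term $A$ under $L^2$-invariant rescalings; the borderline case $\mathcal O=1$ is deliberately left out of the statement.
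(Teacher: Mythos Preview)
Your proposal is correct and follows essentially the same route as the paper: for (i) you use $\mathcal O>1$ to get coercivity on $F$, then extract a weak limit and invoke the compact embedding (Lemma \ref{lce}) to recover the constraint and pass to the limit in the nonlocal terms, exactly as the paper does via Remark \ref{rbunvn} and Fatou; for (ii) you pick a compactly supported pair with $A<D$ and run the $L^2$-invariant dilation $(t^{N/2}u_0(tx),t^{N/2}v_0(tx))$, using the $L^2$-critical scaling $D(u_t,v_t)=t^{2s}D(u_0,v_0)$ and the local boundedness of $V_i$ to force $e_3(a,b)=-\infty$, which is again the paper's argument. Your write-up is in fact slightly more careful than the paper's in two places: you check that $\tfrac{2Np}{N+\alpha}=2+\tfrac{4s}{N+\alpha}\in[2,2^*_s)$ so Lemma \ref{lce} applies, and you note explicitly that the support shrinks for $t\ge1$ so the potential term stays bounded.
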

Before the proof of Proposition \ref{propo1}, we need the following lemma.
\begin{lemma}
Let $\mathcal{O}(\mu_1,\mu_2,\beta)$ be defined by \eqref{o}. Then $\mathcal{O}(\mu_1,\mu_2,\beta)$ is locally Lipschitz continuous in $\mathbb{R}^{3}_{+}$.
\end{lemma}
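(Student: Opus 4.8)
The plan is to show that $\mathcal{O}(\mu_1,\mu_2,\beta)$ depends continuously — indeed locally Lipschitz — on the parameters by exploiting the homogeneity structure of the Rayleigh-type quotient in \eqref{o}. Write
$$
\mathcal{O}(\mu_1,\mu_2,\beta)=\inf_{(u,v)\in S_a\times S_b}\frac{A(u,v)}{G_{\mu_1,\mu_2,\beta}(u,v)},\quad
G_{\mu_1,\mu_2,\beta}(u,v):=\frac{\mu_1}{p}B(u,u,p)+\frac{\mu_2}{p}B(v,v,p)+\frac{2\beta}{p}B(u,v,p),
$$
and note that for $(u,v)\in S_a\times S_b$ the numerator $A(u,v)$ is independent of $(\mu_1,\mu_2,\beta)$, while $G_{\mu_1,\mu_2,\beta}(u,v)$ is \emph{linear} in $(\mu_1,\mu_2,\beta)$ with coefficients $\tfrac1p B(u,u,p)\ge0$, $\tfrac1p B(v,v,p)\ge0$, $\tfrac2p B(u,v,p)\ge0$, all nonnegative by the Hardy--Littlewood--Sobolev inequality (Lemma \ref{lhls}). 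First I would record that $\mathcal{O}(\mu_1,\mu_2,\beta)\in(0,\infty)$ for every $(\mu_1,\mu_2,\beta)\in\mathbb{R}^3_+$: positivity because \eqref{bgn} (Remark \ref{rpi}) together with the elementary bound $2B(u,v,p)\le B(u,u,p)+B(v,v,p)$ gives $G_{\mu_1,\mu_2,\beta}(u,v)\le C\,A(u,v)$ with $C$ depending only on $a,b,\mu_1,\mu_2,\beta$, so the quotient is bounded below by a positive constant; finiteness because choosing any fixed admissible pair makes the quotient finite.

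Next, fix a compact set $K\subset\mathbb{R}^3_+$; on $K$ all three parameters lie in an interval $[\delta,M]$ with $\delta>0$. For two parameter triples $\vec\mu=(\mu_1,\mu_2,\beta)$ and $\vec\mu'=(\mu_1',\mu_2',\beta')$ in $K$ and any admissible $(u,v)$, linearity gives
$$
\bigl|G_{\vec\mu}(u,v)-G_{\vec\mu'}(u,v)\bigr|\le \frac{1}{p}\bigl(|\mu_1-\mu_1'|B(u,u,p)+|\mu_2-\mu_2'|B(v,v,p)+2|\beta-\beta'|B(u,v,p)\bigr)\le \frac{C_0}{\delta}\,|\vec\mu-\vec\mu'|\,G_{\vec\mu'}(u,v),
$$
where I used $B(u,u,p),B(v,v,p),2B(u,v,p)\le p\,G_{\vec\mu'}(u,v)/\delta$ (each term is dominated by $G_{\vec\mu'}/\delta$ since every coefficient of $G_{\vec\mu'}$ is $\ge\delta$). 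Hence
$$
(1-L|\vec\mu-\vec\mu'|)\,G_{\vec\mu'}(u,v)\le G_{\vec\mu}(u,v)\le (1+L|\vec\mu-\vec\mu'|)\,G_{\vec\mu'}(u,v)
$$
with $L=C_0/\delta$ independent of $(u,v)$. Dividing $A(u,v)$ by these inequalities and taking the infimum over $S_a\times S_b$ yields, whenever $|\vec\mu-\vec\mu'|$ is small enough that $1-L|\vec\mu-\vec\mu'|>0$,
$$
\frac{1}{1+L|\vec\mu-\vec\mu'|}\,\mathcal{O}(\vec\mu')\le \mathcal{O}(\vec\mu)\le \frac{1}{1-L|\vec\mu-\vec\mu'|}\,\mathcal{O}(\vec\mu').
$$
Since $\mathcal{O}$ is bounded on $K$ (by the first step, using that $\mathcal O(\vec\mu')$ stays between two positive constants for $\vec\mu'\in K$), the elementary estimates $\bigl|\tfrac{1}{1\pm t}-1\bigr|\le 2t$ for $t\le\tfrac12$ convert this into $|\mathcal{O}(\vec\mu)-\mathcal{O}(\vec\mu')|\le C_K\,|\vec\mu-\vec\mu'|$ for all $\vec\mu,\vec\mu'\in K$ with $|\vec\mu-\vec\mu'|\le (2L)^{-1}$, which is exactly local Lipschitz continuity on $\mathbb{R}^3_+$.

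The only delicate point is making sure the constant $L$ really is uniform in $(u,v)$ and that the denominator $G_{\vec\mu'}(u,v)$ never degenerates in a way that spoils the bound — this is handled precisely because we pass to the ratio $G_{\vec\mu}/G_{\vec\mu'}$ rather than estimating differences additively, so the (possibly small or large) size of $G_{\vec\mu'}(u,v)$ cancels and only the lower bound $\delta$ on the coefficients is used. I expect this ratio trick, together with keeping $\vec\mu$ away from the boundary of $\mathbb{R}^3_+$ via the compact set $K$, to be the crux; everything else is bookkeeping with Lemma \ref{lhls} and \eqref{bgn}.
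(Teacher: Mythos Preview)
Your argument is correct, and it follows a genuinely different route from the paper's. The paper first proves explicit two-sided bounds
\[
\frac{a^{*}}{\max\{(\mu_1+\beta)a^{2(p-1)},(\mu_2+\beta)b^{2(p-1)}\}}\le \mathcal{O}(\mu_1,\mu_2,\beta)\le\frac{(a^2+b^2)a^{*}}{\mu_1a^{2p}+\mu_2b^{2p}+2\beta a^pb^p},
\]
then takes a minimizing sequence $(u_n,v_n)$ for $\mathcal{O}(\mu_1,\mu_2,\beta)$ normalized by $A(u_n,v_n)=1$ and bounds each $B(u_n,u_n,p)$, $B(v_n,v_n,p)$, $B(u_n,v_n,p)$ \emph{additively} via the sharp inequality \eqref{bgn}; this yields a Lipschitz estimate for $1/\mathcal{O}$, which is then converted to one for $\mathcal{O}$ using the explicit bounds above. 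Your approach instead bounds each $B$-term \emph{multiplicatively} by $G_{\vec\mu'}(u,v)/\delta$, using only that the coefficients in $G_{\vec\mu'}$ are bounded below by $\delta$ on the compact set $K$. This ratio trick is cleaner and more elementary: it avoids the sharp constant $a^{*}$ entirely in the Lipschitz step and needs no normalization of a minimizing sequence. The paper's approach, on the other hand, produces the explicit bounds on $\mathcal{O}$ displayed above, which are of independent interest for the rest of Section~4 (they feed into Proposition~\ref{propo1} and the proof of Theorem~\ref{t2}). Both arguments ultimately rest on the linearity of $G_{\vec\mu}(u,v)$ in $\vec\mu$ and the nonnegativity of the $B$-terms.
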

\begin{proof}
First we prove that
\begin{align}\label{bound}
\frac{a^{*}}{\max\{(\mu_1+\beta)a^{2(p-1)},(\mu_2+\beta)b^{2(p-1)}\}}\le \mathcal{O}(\mu_1,\mu_2,\beta)\le\frac{(a^2+b^2)a^{*}}{\mu_1a^{2p}+\mu_2b^{2p}+2\beta a^pb^p}.
\end{align}
Indeed, from \eqref{gGN} and \eqref{bgn}, it can be derived that
\begin{align}\label{buvgn}
\begin{split}
B(u,v,p)
&\le C(N,\alpha)\|u\|^{p}_{\frac{2Np}{N+\alpha}}\|v\|^{p}_{\frac{2Np}{N+\alpha}}\le\frac{C(N,\alpha)}{2}\bigg[\|u\|^{2p}_{\frac{2Np}{N+\alpha}}+\|v\|^{2p}_{\frac{2Np}{N+\alpha}}\bigg]\\
&\le\frac{p}{2a^{*}}\bigg[\|(-\Delta)^{\frac{s}{2}}u\|^2_2a^{2(p-1)}+\|(-\Delta)^{\frac{s}{2}}v\|^2_2b^{2(p-1)}\bigg].
\end{split}
\end{align}
By Lemma \ref{lggn} and \eqref{buvgn}, we hace
\begin{align*}
&\frac{A(u,v)}{\frac{\mu_1}{p}B(u,u,p)+\frac{\mu_2}{p}B(v,v,p)+\frac{2\beta}{p}B(u,v,p)}\\
\ge&\frac{A(u,v)a^{*}}{(\mu_1+\beta)a^{2(p-1)}\|(-\Delta)^{\frac{s}{2}}u\|_2^2+(\mu_2+\beta)b^{2(p-1)}\|(-\Delta)^{\frac{s}{2}}v\|_2^2}\\
\ge&\frac{a^{*}}{\max\{(\mu_1+\beta)a^{2(p-1)},(\mu_2+\beta)b^{2(p-1)}\}}.
\end{align*}
For the upper bound of $\mathcal{O}(\mu_1,\mu_2,\beta)$,  we take $(a\frac{Q}{\|Q\|_2},b\frac{Q}{\|Q\|_2})\in S_a\times S_b$ as a test function for \eqref{o}, it can be derived from \eqref{l2bq} that
\begin{align*}
&\frac{A(a\frac{Q}{\|Q\|_2},b\frac{Q}{\|Q\|_2})}{\frac{\mu_1}{p}B(a\frac{Q}{\|Q\|_2},a\frac{Q}{\|Q\|_2},p)+\frac{\mu_2}{p}B(b\frac{Q}{\|Q\|_2},b\frac{Q}{\|Q\|_2},p)+\frac{2\beta}{p}B(a\frac{Q}{\|Q\|_2},b\frac{Q}{\|Q\|_2},p)}\\
=&\frac{(a^2+b^2)pa^{*}}{\mu_1a^{2p}+\mu_2b^{2p}+2\beta a^pb^p}\frac{\|(-\Delta)^{\frac{s}{2}}Q\|_2^2}{B(Q,Q,p)}\\
=&\frac{(a^2+b^2)a^{*}}{\mu_1a^{2p}+\mu_2b^{2p}+2\beta a^pb^p}.
\end{align*}
Hence we can give the upper bound of $\mathcal{O}(\mu_1,\mu_2,\beta)$.

We consider $(\mu_1,\mu_2,\beta),(\hat{\mu}_1,\hat{\mu}_2,\hat{\beta})\in\mathbb{R}^{3}_{+}$. Let $\{(u_n,v_n)\}\subset S_a\times S_b$ be a minimizing sequence of $\mathcal{O}(\mu_1,\mu_2,\beta)$. It is easy to observe that $\mathcal{O}(\mu_1,\mu_2,\beta)$ is invariant under the scaling $(u_{t}(x),v_{t}(x))=(t^{\frac{N}{2}}u(tx),t^{\frac{N}{2}}v(tx))$ for any $t>0$. Without loss of generality, we may assume that
$$
A(u_n,v_n)=1,\text{~for all }n.
$$
By Lemma \ref{lggn} and \eqref{buvgn}, it yields that
\begin{align*}
&\frac{1}{\mathcal{O}(\mu_1,\mu_2,\beta)}\\
=&\lim_{n\rightarrow\infty}\frac{\frac{\hat{\mu}_1}{p}B(u_n,u_n,p)+\frac{\hat{\mu}_2}{p}B(v_n,v_n,p)+\frac{2\hat{\beta}}{p}B(u_n,v_n,p)}{A(u_n,v_n)}\\
&+\frac{\mu_1-\hat{\mu}_1}{p}B(u_n,u_n,p)+\frac{\mu_2-\hat{\mu}_2}{p}B(v_n,v_n,p)+\frac{2(\beta-\hat{\beta})}{p}B(u_n,v_n,p)\\
\le&\frac{1}{\mathcal{O}(\hat{\mu}_1,\hat{\mu}_2,\hat{\beta})}+\frac{a^{2(p-1)}}{a^{*}}|\mu_1-\hat{\mu}_1|+\frac{b^{2(p-1)}}{a^{*}}|\mu_2-\hat{\mu}_2|+\frac{a^{2(p-1)}+b^{2(p-1)}}{a^{*}}|\beta-\hat{\beta}|.
\end{align*}
As a consequence,
\begin{align*}
\frac{1}{\mathcal{O}(\mu_1,\mu_2,\beta)}-\frac{1}{\mathcal{O}(\hat{\mu}_1,\hat{\mu}_2,\hat{\beta})}\le\frac{3(a^{2(p-1)}+b^{2(p-1)})}{a^{*}}|(\mu_1,\mu_2,\beta)-(\hat{\mu}_1,\hat{\mu}_2,\hat{\beta})|.
\end{align*}
Therefore,
\begin{align*}
\bigg|\frac{\mathcal{O}(\mu_1,\mu_2,\beta)-\mathcal{O}(\hat{\mu}_1,\hat{\mu}_2,\hat{\beta})}{\mathcal{O}(\mu_1,\mu_2,\beta)\mathcal{O}(\hat{\mu}_1,\hat{\mu}_2,\hat{\beta})}\bigg|&=\bigg|\frac{1}{\mathcal{O}(\mu_1,\mu_2,\beta)}-\frac{1}{\mathcal{O}(\hat{\mu}_1,\hat{\mu}_2,\hat{\beta})}\bigg|\\
&\le\frac{3(a^{2(p-1)}+b^{2(p-1)})}{a^{*}}|(\mu_1,\mu_2,\beta)-(\hat{\mu}_1,\hat{\mu}_2,\hat{\beta})|.
\end{align*}
By \eqref{bound}, we can conclude that
\begin{align*}
&|\mathcal{O}(\mu_1,\mu_2,\beta)-\mathcal{O}(\hat{\mu}_1,\hat{\mu}_2,\hat{\beta})|\\
\le&\frac{3(a^{2(p-1)}+b^{2(p-1)})(a^2+b^2)^2a^{*}}{(\mu_1a^{2p}+\mu_2b^{2p}+2\beta a^pb^p)(\hat{\mu}_1a^{2p}+\hat{\mu}_2b^{2p}+2\hat{\beta}a^pb^p)}|(\mu_1,\mu_2,\beta)-(\hat{\mu}_1,\hat{\mu}_2,\hat{\beta})|.
\end{align*}
Therefore, we establish the lemma.
\end{proof}

\begin{proof}[\textbf{Proof of Proposition \ref{propo1}}]
(i) Let $\{(u_n,v_n)\}\subset H^s_1(\mathbb{R}^N)\times H^s_2(\mathbb{R}^N)$ be a minimizing sequence for \eqref{p3}, where $H^s_{i}(\mathbb{R}^N)(i=1,2)$ are defined by \eqref{Hss}. Then
$$
\|u_n\|_2^2=a^2,~\|v_n\|_2^2=b^2{~and~}\lim_{n\rightarrow\infty}E_{\mu_1,\mu_2,\beta}(u_n,v_n)=e_3(a,b).
$$
By \eqref{efct}, there holds that
$$
E_{\mu_1,\mu_2,\beta}(u_n,v_n)\ge\frac12(1-\frac{1}{\mathcal{O}(\mu_1,\mu_2,\beta)})A(u,v)+\frac12\int_{\mathbb{R}^N}(V_1(x)|u_n|^2+V_2(x)|v_n|^2)dx.
$$
If $\mathcal{O}(\mu_1,\mu_2,\beta)>1$, then $\{(u_n,v_n)\}\subset H^s_1(\mathbb{R}^N)\times H^s_2(\mathbb{R}^N)$ is bounded. Hence there exists a $(u,v)\in H^s_1(\mathbb{R}^N)\times H^s_2(\mathbb{R}^N)$ such that
\begin{align*}
 (u_n,v_n)\rightharpoonup(u,v)\text{~weakly in }H^s_1(\mathbb{R}^N)\times H^s_2(\mathbb{R}^N).
\end{align*}
In addition, by  Remark \ref{rbunvn} and Lemma \ref{lce} ,  it can be concluded that $(u,v)\in S_a\times S_b$ and
\begin{align*}
B(u_n,u_n,p)\rightarrow B(u,u,p),B(v_n,v_n,p)\rightarrow B(v,v,p)\text{~and~}B(u_n,v_n,p)\rightarrow B(u,v,p).
\end{align*}
Similar to the proof of Theorem \ref{t11}, we can obtain that $e_3(a,b)=E_{\mu_1,\mu_2,\beta}(u,v)$. In other words, \eqref{p3} has at least one minimizer. Hence (i) holds.

(ii) Suppose that $\mathcal{O}(\mu_1,\mu_2,\beta)>1$. From the property of infimum, there exists  $(u,v)\in F$ and $u,v$ have compact support in $\mathbb{R}^N$ such that
\begin{align}\label{o1}
\frac{A(u,v)}{\frac{\mu_1}{p}B(u,u,p)+\frac{\mu_2}{p}B(v,v,p)+\frac{2\beta}{p}B(u,v,p)}\le\delta=\frac{1+\mathcal{O}(\mu_1,\mu_2,\beta)}{2}<1,
\end{align}
where $F=[H^s_1(\mathbb{R}^N)\times H^s_2(\mathbb{R}^N)]\cap[S_a\times S_b]$.
For $t>0$, define that $(u_{t}(x),v_{t}(x))=(t^{\frac{N}{2}}u(tx),t^{\frac{N}{2}}v(tx))$. It is obvious to check that $(u_{t}(x),v_{t}(x))\in F$. From $u,v$ have compact support and $V_{i}(x)\in L^{\infty}_{loc}(\mathbb{R}^N)$, it follows that for some constants $C>0$,
\begin{align}\label{vc}
\int_{\mathbb{R}^N}[V_1(x)|u_t(x)|^2+V_2(x)|v_t(x)|^2]dx=\int_{\mathbb{R}^N}[V_1(\frac{x}{t})|u|^2+V_2(\frac{x}{t})|v|^2]dx\le C.
\end{align}
From \eqref{o1} and \eqref{vc}, direct computation shows that
\begin{align*}
E_{\mu_1,\mu_2,\beta}(u_{t}(x),v_{t}(x))
=&t^{2s}\bigg(\frac{1}{2}A(u,v)-\frac{\mu_1}{2p}B(u,u,p)-\frac{\mu_2}{2p}B(v,v,p)-\frac{\beta}{p}B(u,v,p)\bigg)\\
&+\frac12\int_{\mathbb{R}^N}[V_1(\frac{x}{t})|u|^2+V_2(\frac{x}{t})|v|^2]dx\\
\le&t^{2s}\bigg(\frac{1}{2}A(u,v)-\frac{\mu_1}{2p}B(u,u,p)-\frac{\mu_2}{2p}B(v,v,p)-\frac{\beta}{p}B(u,v,p)\bigg)+C\\
\le&t^{2s}(1-\frac{1}{\delta})A(u,v)+C\\
\rightarrow&-\infty,\quad\text{as~}t\rightarrow\infty.
\end{align*}
Therefore,
\begin{align*}
e_3(a,b)\le E_{\mu_1,\mu_2,\beta}(u_{t}(x),v_{t}(x))\rightarrow-\infty,\quad\text{as~}t\rightarrow\infty,
\end{align*}
which implies \eqref{p3} has no minimizer.  In conclusion, the proposition holds.
\end{proof}
We end the section with the proof of Theorem \ref{t2} and Theorem \ref{t21}.
\begin{proof}[\textbf{Proof of Theorem \ref{t2}}]
$(i)$ For any $(u,v)\in S_{a}\times S_{b}$, from \eqref{gGN} and \eqref{bgn}, we can derive that
\begin{align*}%\label{buvsgn}
B(u,v,p)&\le C(N,\alpha)\|u\|^{p}_{\frac{2Np}{N+\alpha}}\|v\|^{p}_{\frac{2Np}{N+\alpha}}\le\frac{p}{a^{*}}\|(-\Delta)^{\frac{s}{2}}u\|_2\|(-\Delta)^{\frac{s}{2}}v\|_2a^{p-1}b^{p-1}.
\end{align*}
Consequently,
\begin{align*}
\mathcal{O}(\mu_1,\mu_2,\beta)
&=\inf_{(u,v)\in S_a\times S_b}\frac{A(u,v)}{\frac{\mu_1}{p}B(u,u,p)+\frac{\mu_2}{p}B(v,v,p)+\frac{2\beta}{p}B(u,v,p)}\\
&\ge\inf_{(u,v)\in S_a\times S_b}\frac{a^{*}(\|(-\Delta)^{\frac{s}{2}}u\|_2^2+\|(-\Delta)^{\frac{s}{2}}v\|_2^2)}{W(u,v)},
\end{align*}
where
$$
W(u,v)=\mu_1a^{2(p-1)}\|(-\Delta)^{\frac{s}{2}}u\|_2^2+\mu_2b^{2(p-1)}\|(-\Delta)^{\frac{s}{2}}v\|_2^2+2\beta\|(-\Delta)^{\frac{s}{2}}u\|_2\|(-\Delta)^{\frac{s}{2}}v\|_2a^{p-1}b^{p-1}.
$$
Let
$$
t=\frac{\|(-\Delta)^{\frac{s}{2}}v\|_2}{\|(-\Delta)^{\frac{s}{2}}u\|_2}\in(0,\infty)\text{~and~}f_{\mu_1,\mu_2,\beta}(t)=\frac{a^{*}(1+t^2)}{\mu_1a^{2(p-1)}+\mu_2b^{2(p-1)}t^2+2\beta a^{p-1}b^{p-1}t}.
$$
Then
\begin{align}\label{of}
\mathcal{O}(\mu_1,\mu_2,\beta)\ge\inf_{t\in(0,\infty)}f_{\mu_1,\mu_2,\beta}(t).
\end{align}
Since $0<\mu_1a^{2(p-1)}<a^{*}$, $0<\mu_2b^{2(p-1)}<a^{*}$ and $\beta<\sqrt{(a^{*}-\mu_1a^{2(p-1)})(a^{*}-\mu_2b^{2(p-1)})}$,  $f_{\mu_1,\mu_2,\beta}(t)>1$ for $t\in(0,\infty)$ and
\begin{align*}
\lim_{t\to 0^{+}}f_{\mu_1,\mu_2,\beta}(t)=\frac{a^{*}}{\mu_1a^{2(p-1)}}>1,~\lim_{t\to \infty}f_{\mu_1,\mu_2,\beta}(t)=\frac{a^{*}}{\mu_2b^{2(p-1)}}>1.
\end{align*}
Hence  $\inf\limits_{t\in(0,\infty)}f_{\mu_1,\mu_2,\beta}(t)>1$ can be obtained from the continuity of $f_{\mu_1,\mu_2,\beta}(t)$. Therefore, from \eqref{of}, $\mathcal{O}(\mu_1,\mu_2,\beta)>1$. By Proposition \ref{propo1} $(i)$, \eqref{p3} has at least one minimizer. Hence $(i)$ holds.

$(ii)$ We take a function $0\le\phi_{b}\in C_{0}^{\infty}(\mathbb{R}^N)$ with $\|\phi_{b}\|_2=b$ and set
$$
u_{t}(x)=\frac{a}{\|Q\|_2}t^{\frac{N}{2s}}Q(t^{\frac{1}{s}}x)\in S_{a},~t>0,
$$
where $Q(x)$ satisfies \eqref{Qequation}. By standard computation and Remark \ref{rpi}, we have
\begin{align*}
\|(-\Delta)^{\frac{s}{2}}u_{t}\|_2^2=\frac{a^2t^2}{\|Q\|^2_2}\|(-\Delta)^{\frac{s}{2}}Q\|_2^2=\frac{a^2t^2N}{\alpha+2s},
\end{align*}
and
\begin{align*}
&B(u_{t},u_{t},p)=\frac{a^{2p}t^2}{\|Q\|^{2p}_2}B(Q,Q,p)=\frac{N+\alpha+2s}{\alpha+2s}\frac{a^{2p}t^2}{a^{*}},\\
&B(u_{t},\phi_{b},p)=\frac{a^{p}t^{\frac{N(\frac{p}{2}-1)-\alpha}{s}}}{\|Q\|^{p}_2}\int_{\mathbb{R}^N}\int_{\mathbb{R}^N}\frac{|Q(x)|^{p}|\phi_{b}(\frac{y}{t^{\frac{1}{s}}})|^{p}}{|x-y|^{N-\alpha}}dxdy=C\frac{a^{p}t^{\frac{N(\frac{p}{2}-1)-\alpha}{s}}}{\|Q\|^{p}_2},
\end{align*}
for some constant $C>0$.

For $\mu_1a^{2(p-1)}<a^{*}$, we take $(u_{t},\phi_{b})\in S_{a}\times S_{b}$ as a trial function for $\mathcal{O}(\mu_1,\mu_2,\beta)$, thus
\begin{align*}
\mathcal{O}(\mu_1,\mu_2,\beta)&\le\frac{A(u_{t},\phi_{b})}{\frac{\mu_1}{p}B(u_{t},u_{t},p)+\frac{\mu_2}{p}B(\phi_{b},\phi_{b},p)+\frac{2\beta}{p}B(u_{t},\phi_{b},p)}\\
&\le\frac{\frac{a^2t^2N}{\alpha+2s}+\|(-\Delta)^{\frac{s}{2}}\phi_{b}\|_2^2}{\frac{N\mu_1}{\alpha+2s}\frac{a^{2p}t^2}{a^{*}}+\frac{\mu_2}{p}B(\phi_{b},\phi_{b},p)+\frac{2\beta}{p}C\frac{a^{p}t^{\frac{N(\frac{p}{2}-1)-\alpha}{s}}}{\|Q\|^{p}_2}}\\
&\le\frac{\frac{a^2t^2N}{\alpha+2s}+\|(-\Delta)^{\frac{s}{2}}\phi_{b}\|_2^2}{\frac{N\mu_1}{\alpha+2s}\frac{a^{2p}t^2}{a^{*}}}\rightarrow\frac{a^{*}}{\mu_1a^{2(p-1)}}<1,\text{~as~}t\rightarrow\infty.
\end{align*}
Hence $\mathcal{O}(\mu_1,\mu_2,\beta)<1$. Combining with Proposition \ref{propo1} $(ii)$, \eqref{p3} has no minimizer.

Similarly, if $\mu_2b^{2(p-1)}<a^{*}$, we take $(\phi_{a}(x),v_{t}(x))\in S_{a}\times S_{b}$ as a trial function for $\mathcal{O}(\mu_1,\mu_2,\beta)$, where
\begin{align*}
&0\le\phi_{a}\in C_{0}^{\infty}(\mathbb{R}^N),~\|\phi_{a}\|_2=a\\
&v_{t}(x)=\frac{b}{\|Q\|_2}t^{\frac{N}{2s}}Q(t^{\frac{1}{s}}x)\in S_{b},~t>0.
\end{align*}
Then $\mathcal{O}(\mu_1,\mu_2,\beta)\le\frac{a^{*}}{\mu_2b^{2(p-1)}}<1$~as~$t\rightarrow\infty$.
Hence by Proposition \ref{propo1} $(ii)$, we can obtain the nonexistence of minimizer for \eqref{p3}.

Finally, if $\beta>\frac{(a^2+b^2)a^{*}-\mu_1a^{2p}+\mu_2b^{2p}}{2a^{p}b^{p}}$, we take $(u_{t}(x),v_{t}(x))\in S_{a}\times S_{b}$ as a trial function for $\mathcal{O}(\mu_1,\mu_2,\beta)$. Then
\begin{align*}
\mathcal{O}(\mu_1,\mu_2,\beta)&\le\frac{A(u_{t},v_{t})}{\frac{\mu_1}{p}B(u_{t},u_{t},p)+\frac{\mu_2}{p}B(v_{t},v_{t},p)+\frac{2\beta}{p}B(u_{t},v_{t},p)}\\
&=\frac{(a^2+b^2)a^{*}}{\mu_1a^{2p}+\mu_2b^{2p}+2\beta a^{p}b^{p}}<1.
\end{align*}
Hence \eqref{p3} has no minimizer by Proposition \ref{propo1} $(ii)$ again. In conclusion, we finish the proof.
\end{proof}

\begin{proof}[\textbf{Proof of Theorem \ref{t21}}] The proof is similar to the proof of Theorem \ref{t12}, hence we omit it.
\end{proof}
\section{$L^2$-supercritical}
In this section, we consider $1+\frac{\alpha+2s}{N}<p,q,r<\frac{N+\alpha}{N+2s}$.
We use the structure which is first introduced by L.Jeanjean in \cite{J97} and similar to the Section 4 in \cite{LHXY20}. Define the map
\begin{align*}%\label{star}
l\star(u,v)=(l\star u,l\star v):=(e^{\frac{Nl}{2s}}u(e^{\frac{l}{s}}x),e^{\frac{Nl}{2s}}v(e^{\frac{l}{s}}x)).
\end{align*}
It is easy to observe that $\|l\star u\|_2^2=\|u\|_2^2$ and $\|l\star v\|_2^2=\|v\|_2^2$.
\begin{lemma}\label{sinfty}
Suppose that $(u,v)\in S_{a}\times S_{b}$. Then
\begin{align*}
\lim_{l\rightarrow-\infty}A(l\star u,l\star v)=0,~\lim_{l\rightarrow\infty}A(l\star u,l\star v)=\infty,
\end{align*}
and
\begin{align*}
\lim_{l\rightarrow-\infty}E_{\mu_1,\mu_2,\beta}(l\star(u,v))=0^{+},~\lim_{l\rightarrow\infty}E_{\mu_1,\mu_2,\beta}(l\star(u,v))=-\infty.
\end{align*}
\end{lemma}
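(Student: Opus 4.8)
The plan is to compute explicitly how each term of $A(\cdot,\cdot)$ and $E_{\mu_1,\mu_2,\beta}(\cdot)$ scales under the dilation $l\star(u,v)$, and then let $l\to\pm\infty$. First I would record the effect of the scaling on the kinetic term. A change of variables $y=e^{l/s}x$ gives $\|(-\Delta)^{\frac{s}{2}}(l\star u)\|_2^2=e^{2l}\|(-\Delta)^{\frac{s}{2}}u\|_2^2$, and similarly for $v$, so $A(l\star u,l\star v)=e^{2l}A(u,v)$. Since $(u,v)\in S_a\times S_b$ with $A(u,v)\ge 0$ (and strictly positive unless $u,v$ are trivial, which is excluded by the mass constraint), we immediately get $A(l\star u,l\star v)\to 0$ as $l\to-\infty$ and $\to\infty$ as $l\to+\infty$. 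This disposes of the first pair of limits.

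Next I would scale the nonlocal terms. For the Choquard-type quantity, the substitution $x\mapsto e^{l/s}x$, $y\mapsto e^{l/s}y$ in the double integral defining $B(l\star u,l\star u,p)=\int\!\!\int |l\star u(x)|^p|x-y|^{\alpha-N}|l\star u(y)|^p\,dx\,dy$ produces a factor $e^{Npl/s}$ from the two copies of $(e^{Nl/2s})^p$, a factor $e^{(N-\alpha)l/s}$ from $|x-y|^{\alpha-N}$, and a factor $e^{-2Nl/s}$ from the two volume elements; collecting exponents yields $B(l\star u,l\star u,p)=e^{\frac{l}{s}(Np-N-\alpha)}B(u,u,p)=e^{2p\delta_p l}B(u,u,p)$, using $\delta_p=\frac{N(p-1)-\alpha}{2ps}$. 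The same computation gives $B(l\star v,l\star v,q)=e^{2q\delta_q l}B(v,v,q)$ and $B(l\star u,l\star v,r)=e^{2r\delta_r l}B(u,v,r)$. Since $V_1=V_2=0$ here (we are in the setting of Theorem~\ref{t3}, \textbf{(V1)}), the energy becomes
\begin{align*}
E_{\mu_1,\mu_2,\beta}(l\star(u,v))=\tfrac12 e^{2l}A(u,v)-\tfrac{\mu_1}{2p}e^{2p\delta_p l}B(u,u,p)-\tfrac{\mu_2}{2q}e^{2q\delta_q l}B(v,v,q)-\tfrac{\beta}{r}e^{2r\delta_r l}B(u,v,r).
\end{align*}

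Finally I would read off the two asymptotics. In the $L^2$-supercritical range $1+\frac{\alpha+2s}{N}<p,q,r<\frac{N+\alpha}{N-2s}$ one checks $2p\delta_p>2$, $2q\delta_q>2$, $2r\delta_r>2$ (equivalently $N(p-1)-\alpha>2ps$, etc.), so as $l\to-\infty$ every nonlocal exponential decays strictly faster than $e^{2l}$; hence $E_{\mu_1,\mu_2,\beta}(l\star(u,v))=\tfrac12 e^{2l}A(u,v)(1+o(1))\to 0^+$, using $A(u,v)>0$. As $l\to+\infty$ the dominant term is the one with the largest exponent among $\{2p\delta_p,2q\delta_q,2r\delta_r\}$, all of which exceed $2$ and all of which multiply strictly negative coefficients (since $\mu_1,\mu_2,\beta>0$ and $B(u,u,p),B(v,v,q),B(u,v,r)>0$ for nontrivial $u,v$), so $E_{\mu_1,\mu_2,\beta}(l\star(u,v))\to-\infty$. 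The only mild subtlety — the ``main obstacle'' such as it is — is bookkeeping: one must confirm the scaling exponents $2p\delta_p$ etc.\ are $>2$ precisely because $p,q,r>1+\frac{\alpha+2s}{N}$, and must note that $A(u,v)$ and the three $B$-quantities are strictly positive (the latter because $u,v\not\equiv 0$ and the Riesz kernel is positive), so none of the leading terms degenerate; everything else is a direct change of variables.
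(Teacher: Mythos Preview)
Your proposal is correct and is precisely the ``direct computation'' that the paper invokes but does not spell out; the paper's entire proof reads ``We can obtain the lemma by direct computation.'' One small slip: in your parenthetical you write ``equivalently $N(p-1)-\alpha>2ps$'', but since $2p\delta_p=\frac{N(p-1)-\alpha}{s}$, the correct equivalence for $2p\delta_p>2$ is $N(p-1)-\alpha>2s$ (not $2ps$); this does not affect the argument, as the conclusion $2p\delta_p>2$ under $p>1+\frac{\alpha+2s}{N}$ is right.
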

\begin{proof}
We can obtain the lemma by direct computation.
\end{proof}

From \eqref{gGN}, it follows that
\begin{align}\label{estimate u^p}
\begin{split}
&\frac{\mu_1}{2p}B(u,u,p)+\frac{\mu_2}{2q}B(v,v,q)+\frac{\beta}{r}B(u,v,r)\\
\le &C\|(-\Delta)^{\frac{s}{2}}u\|_2^{2p\delta_{p}}a^{2q(1-\delta_{q})}
+C\|(-\Delta)^{\frac{s}{2}}v\|_2^{2q\delta_{q}}b^{2q(1-\delta_{q})}\\
&+C\|(-\Delta)^{\frac{s}{2}}u\|_2^{r\delta_{r}}\|(-\Delta)^{\frac{s}{2}}v\|_2^{r\delta_{r}}(ab)^{r(1-\delta_{r})}\\
\le& \bar{C}\max\{A^{\delta}(u,v),A^{\eta}(u,v)\},
\end{split}
\end{align}
where $\delta=\min\{p\delta_{p},q\delta_{q},r\delta_{r}\}$ and $\eta=\max\{p\delta_{p},q\delta_{q},r\delta_{r}\}$.
\begin{lemma}\label{lmps}
There exist $K>0$ sufficient small such that for the sets
\begin{align*}%\label{set}
\begin{split}
&\Omega=\{(u,v)\in S_{a}\times S_{b}:A(u,v)\le K\}\\
&\Pi=\{(u,v)\in S_{a}\times S_{b}:A(u,v)= 2K\}
\end{split}
\end{align*}
there holds
\begin{itemize}
  \item [(i)] for any $(u,v)\in\Omega$, there holds that $E(u,v)>0$;
  \item [(ii)] $\sup\limits_{\Omega}E(u,v)<\inf\limits_{\Pi}E(u,v)$;
  \item[(iii)]$\inf\limits_{\Pi}E(u,v)>0$.
\end{itemize}
\end{lemma}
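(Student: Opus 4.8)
The plan is to extract all three statements from the inequality \eqref{estimate u^p}, which in the present $L^2$-supercritical regime says that the nonlinear part of $E$ is \emph{superquadratic} in $A(u,v)$. Indeed, with $\delta_t=\frac{N(t-1)-\alpha}{2ts}$ one has $t\delta_t=\frac{N(t-1)-\alpha}{2s}$, and $t\delta_t>1$ exactly when $t>1+\frac{\alpha+2s}{N}$; since $p,q,r>1+\frac{\alpha+2s}{N}$, this gives $\delta=\min\{p\delta_p,q\delta_q,r\delta_r\}>1$ and $\eta=\max\{p\delta_p,q\delta_q,r\delta_r\}\ge\delta>1$. I regard this exponent bookkeeping as the only point needing care; once it is in hand the three assertions follow from a two-line estimate, so there is no genuine obstacle.

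First I would fix the scale: choose $K\in(0,\frac12)$ small enough that $\bar{C}\,(2K)^{\delta-1}\le\frac18$, which is possible because $\delta-1>0$. For any $(u,v)\in S_a\times S_b$ with $A(u,v)\le 2K<1$ one has $\max\{A^{\delta}(u,v),A^{\eta}(u,v)\}=A^{\delta}(u,v)$, since $\delta\le\eta$ and a number in $(0,1)$ decreases when raised to a larger power; hence \eqref{estimate u^p} yields
\begin{align*}
\frac{\mu_1}{2p}B(u,u,p)+\frac{\mu_2}{2q}B(v,v,q)+\frac{\beta}{r}B(u,v,r)\le\bar{C}\,A^{\delta-1}(u,v)\,A(u,v)\le\bar{C}\,(2K)^{\delta-1}A(u,v)\le\frac18A(u,v),
\end{align*}
and therefore, since $V_1\equiv V_2\equiv 0$ under \textbf{(V1)},
\begin{align*}
E(u,v)\ge\frac12A(u,v)-\frac18A(u,v)=\frac38A(u,v).
\end{align*}
Every $(u,v)\in S_a\times S_b$ has $A(u,v)>0$: if $\|(-\Delta)^{s/2}u\|_2=0$ then $\widehat{u}=0$ a.e., so $u=0$, contradicting $\|u\|_2=a>0$. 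Thus $E(u,v)>0$ whenever $A(u,v)\le 2K$, and in particular $E>0$ on $\Omega=\{A(u,v)\le K\}$, which is (i). Taking $A(u,v)=2K$ in the lower bound gives $E(u,v)\ge\frac38\cdot2K=\frac{3K}{4}$ for every $(u,v)\in\Pi$ (which is nonempty by Lemma \ref{sinfty}), so $\inf_{\Pi}E\ge\frac{3K}{4}>0$, which is (iii).

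For (ii) I would invoke only the crude one-sided bound available on $\Omega$: since $\mu_1,\mu_2,\beta>0$ and $B(u,u,p),B(v,v,q),B(u,v,r)\ge 0$, every $(u,v)\in\Omega$ satisfies $E(u,v)\le\frac12A(u,v)\le\frac{K}{2}$. Combining this with the lower bound on $\Pi$ obtained above,
\begin{align*}
\sup_{\Omega}E\le\frac{K}{2}<\frac{3K}{4}\le\inf_{\Pi}E,
\end{align*}
which is precisely (ii). All three conclusions hold for this single $K$, the only requirements on it being $K<\frac12$ and $\bar{C}\,(2K)^{\delta-1}\le\frac18$; the factor $\frac18$ (rather than the bare $\frac14$ that would already give (i) and (iii)) is what produces the strict gap needed in (ii).
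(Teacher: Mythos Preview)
Your proof is correct and follows essentially the same route as the paper: both derive (i)--(iii) directly from the superquadratic bound \eqref{estimate u^p} together with $\delta,\eta>1$. Your version is tidier in that you fix an explicit $K$ satisfying $\bar{C}(2K)^{\delta-1}\le\frac18$ and $2K<1$, note that then $\max\{A^\delta,A^\eta\}=A^\delta$, and verify $A(u,v)>0$ on $S_a\times S_b$, but the underlying argument is the paper's.
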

\begin{proof}
First, if $(u,v)\in\Omega$ (with $K$ to be determined), then by \eqref{estimate u^p}, we derive that
\begin{align*}
E_{\mu_1,\mu_2,\beta}(u,v):=&\frac{1}{2}A(u,v)-\frac{\mu_1}{2p}B(u,u,p)-\frac{\mu_2}{2q}B(v,v,q)-\frac{\beta}{\gamma}B(u,v,r)\\
\ge&\frac{1}{2}A(u,v)-\bar{C}\max\{A^{\delta}(u,v),A^{\eta}(u,v)\}>0,
\end{align*}
provided $K>0$ sufficient small due to $\delta>1$ and $\eta>1$. Therefore $(i)$ holds.

Next, we  show that $(ii)$ holds. Now if $(u_1,v_1)\in\Pi$ and $(u_2,v_2)\in\Omega$, making $K$ smaller if necessary, we have
\begin{align}\label{-}
\begin{split}
&E_{\mu_1,\mu_2,\beta}(u_1,v_1)-E_{\mu_1,\mu_2,\beta}(u_2,v_2)\\
\ge&\frac{2K}{2}-\bar{C}\max\{(2K)^{\delta},(2K)^{\eta}\}-\frac{K}{2}=\frac{K}{2}-\bar{C}\max\{(2K)^{\delta},(2K)^{\eta}\}>0.
\end{split}
\end{align}
Therefore, $E(u_1,v_1)-E(u_2,v_2)>0$ holds for any $(u_1,v_1)\in\Pi$ and $(u_2,v_2)\in\Omega$. Thus $(ii)$ holds.

Finally, similar to \eqref{-}, it follows that for any $(u_1,v_1)\in\Pi$
\begin{align*}
E_{\mu_1,\mu_2,\beta}(u_1,v_1)
\ge\frac{2K}{2}-\bar{C}\max\{(2K)^{\delta},(2K)^{\eta}\}>\frac{K}{2}>0.
\end{align*}
Hence $(iii)$ holds. In conclusion, we complete the proof.
\end{proof}
From now on, define the set
\begin{align*}%\label{delta}
\Delta=\{(u,v)\in S_{a}\times S_{b}:A(u,v)\ge3K~\text{and}~E_{\mu_1,\mu_2,\beta}(u,v)\le 0\}.
\end{align*}
Using Lemma \ref{sinfty}, we can easily notice that $\Delta\neq\emptyset$. From Lemma \ref{sinfty} and Lemma \ref{lmps}, if we take $(\bar{u},\bar{v})\in\Omega$ and $(\hat{u},\hat{v})\in\Delta$, then there is a mountain path linking $(\bar{u},\bar{v})$ and $(\hat{u},\hat{v})$ and passing through $\Pi$. Let us define that
\begin{align*}%\label{gamma}
\Gamma=\{\gamma:=(\gamma_1(t),\gamma_2(t))\in C([0,1],S_{a}\times S_{b}):\gamma(0)=(\bar{u},\bar{v}),\gamma(1)=(\hat{u},\hat{v})\}.
\end{align*}
\begin{lemma}\label{lPs}
There exists a Palais-Smale sequence $(\bar{u}_n,\bar{v}_n)$ for $E$ on $S_{a}\times S_{b}$ at the level
\begin{align*}
c:=\inf_{\gamma\in\Gamma}\max_{t\in[0,1]}E_{\mu_1,\mu_2,\beta}(\gamma(t))\ge\inf_{\Pi}E>0\ge \max\{E_{\mu_1,\mu_2,\beta}(\bar{u},\bar{v}),E_{\mu_1,\mu_2,\beta}(\hat{u},\hat{v})\}
\end{align*}
satisfying additional condition
\begin{align}\label{add condition}
\begin{split}
&A(\bar{u}_n, \bar{v}_n)-\mu_1\delta_{p}B(\bar{u}_n, \bar{u}_n,p)-\mu_2\delta_{q}B(\bar{v}_n, \bar{v}_n,q)-2\beta\delta_{r}B(\bar{u}_n, \bar{v}_n,r)=o_n(1)
\end{split}
\end{align}
with $o_{n}(1)\rightarrow0$ as $n\rightarrow\infty$, where $\delta_{p}=\frac{N(p-1)-\alpha}{2p}$. Furthermore, $\bar{u}^{-}_n,\bar{v}^{-}_n\rightarrow0$ as $n\rightarrow\infty$.
\end{lemma}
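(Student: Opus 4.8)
The plan is to produce the Palais--Smale sequence by applying a mountain-pass theorem on the constrained manifold $S_a\times S_b$ in the \emph{augmented} setting, where the scaling variable $l$ becomes an explicit coordinate, so that the extra Pohozaev-type condition \eqref{add condition} is built in for free. Concretely, I would work on $\widetilde S:=\mathbb{R}\times S_a\times S_b$ with the auxiliary functional $\widetilde E(l,u,v):=E_{\mu_1,\mu_2,\beta}(l\star(u,v))$, following Jeanjean's device from \cite{J97} (and the adaptation in \cite{LHXY20}). The augmented mountain-pass geometry is inherited from Lemma \ref{lmps} and Lemma \ref{sinfty}: taking the same endpoints $(\bar u,\bar v)\in\Omega$ and $(\hat u,\hat v)\in\Delta$, now viewed as $(0,\bar u,\bar v)$ and $(0,\hat u,\hat v)$ in $\widetilde S$, one checks that any path in $\widetilde\Gamma$ (paths in $\widetilde S$ joining these two points) projects to a path in $\Gamma$ with the same energy values, so the augmented min-max level $\widetilde c$ equals $c$; and the separation estimate (ii)--(iii) of Lemma \ref{lmps}, applied to the crossing of $\Pi$, gives $\widetilde c=c\ge\inf_\Pi E>0\ge\max\{E_{\mu_1,\mu_2,\beta}(\bar u,\bar v),E_{\mu_1,\mu_2,\beta}(\hat u,\hat v)\}$.

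Next I would invoke the general minimax principle with the Palais--Smale condition (Ekeland's variational principle applied to $\widetilde E$ on the complete $C^1$-manifold $\widetilde S$) to obtain a sequence $(l_n,w_n,z_n)\in\widetilde S$ with $\widetilde E(l_n,w_n,z_n)\to c$ and $\|d\widetilde E|_{\widetilde S}(l_n,w_n,z_n)\|\to0$. Splitting the differential into its $\partial_l$-component and its $(u,v)$-component yields two pieces of information. The $\partial_l$-derivative of $\widetilde E$ at $(l_n,w_n,z_n)$ is exactly (a positive multiple of) the Pohozaev functional of the rescaled pair
$$
A(l_n\star(w_n,z_n))-\mu_1\delta_p B(l_n\star w_n,l_n\star w_n,p)-\mu_2\delta_q B(l_n\star z_n,l_n\star z_n,q)-2\beta\delta_r B(l_n\star w_n,l_n\star z_n,r),
$$
so this quantity is $o_n(1)$; setting $(\bar u_n,\bar v_n):=l_n\star(w_n,z_n)$ gives precisely \eqref{add condition}. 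The $(u,v)$-component, combined with the fact that the map $(u,v)\mapsto l\star(u,v)$ is an isometry of $S_a\times S_b$ in the $L^2$-metric and a $C^1$-diffeomorphism, transfers to $\|dE_{\mu_1,\mu_2,\beta}|_{S_a\times S_b}(\bar u_n,\bar v_n)\|\to0$; and $\widetilde E(l_n,w_n,z_n)=E_{\mu_1,\mu_2,\beta}(\bar u_n,\bar v_n)\to c$. Thus $(\bar u_n,\bar v_n)$ is a Palais--Smale sequence for $E$ on $S_a\times S_b$ at level $c$ with the stated extra identity.

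For the final assertion $\bar u_n^-,\bar v_n^-\to0$: I would arrange the minimax so that the competing paths may be taken with nonnegative components. Since $A(|u|,|v|)\le A(u,v)$, $\|\,|u|\,\|_2=\|u\|_2$, and the nonlocal terms $B$ are unchanged under replacing $u,v$ by $|u|,|v|$, we have $E_{\mu_1,\mu_2,\beta}(|u|,|v|)\le E_{\mu_1,\mu_2,\beta}(u,v)$, so replacing any path $\gamma$ by $|\gamma|$ does not increase its maximal energy and keeps the endpoints in $\Omega$ and $\Delta$ (these sets being defined by $A$, $E$, and the mass constraints alone); hence $c$ is unchanged when $\Gamma$ is restricted to paths with nonnegative components, and one obtains the Palais--Smale sequence within (a neighbourhood of) that cone, forcing $\bar u_n^-,\bar v_n^-\to0$ in $H^s$. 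I expect the main obstacle to be the bookkeeping in transferring the constrained differential from the augmented manifold $\widetilde S$ back to $S_a\times S_b$ — one must verify carefully that the isometry $l\star(\cdot)$ is $C^1$ on $H^s\times H^s$, that it does not blow up the $H^s$-norm along the sequence (which requires controlling $l_n$, e.g. via the bounds on $A$ and the energy), and that the tangent-space identifications are compatible — together with checking that the concentration-compactness-free part of the PS condition for $\widetilde E$ is legitimately replaced here by Ekeland's principle rather than an actual compactness statement.
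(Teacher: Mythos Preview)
Your proposal is correct and follows essentially the same route as the paper: augmented functional $\widetilde E$ on $\mathbb{R}\times S_a\times S_b$, equality $\widetilde c=c$, extraction of a PS sequence whose $\partial_l$-component yields \eqref{add condition}, transfer to $(\bar u_n,\bar v_n)=l_n\star(w_n,z_n)$, and nonnegativity via paths with $|u|,|v|$. The one place the paper is sharper than your sketch is precisely the obstacle you flag about controlling $l_n$: rather than bare Ekeland, the paper invokes Ghoussoub's minimax principle (Theorem~3.2 in \cite{G93}), whose localization clause produces a PS sequence $(l_n,u_n,v_n)$ close to a chosen minimizing sequence of paths; since $\widetilde E(l,u,v)=\widetilde E(0,l\star u,l\star v)$, those paths may be taken with $l$-component identically $0$ and with nonnegative $(u,v)$-components, so the localization gives both $l_n\to 0$ and $u_n^-,v_n^-\to 0$ for free, and no separate bound on $l_n$ is needed.
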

\begin{proof}
We consider the auxiliary functional $\bar{E}_{\mu_1,\mu_2,\beta}$:
\begin{align*}%\label{aebare}
\begin{split}
&\bar{E}_{\mu_1,\mu_2,\beta}:\mathbb{R}\times S_{a}\times S_{b}\rightarrow\mathbb{R},\\
&\bar{E}_{\mu_1,\mu_2,\beta}(l,u,v)=E_{\mu_1,\mu_2,\beta}(l\star u,l\star v).
\end{split}
\end{align*}
Set $\bar{\Gamma}:=\{\bar{\gamma}(t)=(l(t),\gamma_1(t),\gamma_2(t))\in C([0,1],\mathbb{R}\times S_{a}\times S_{b}):\bar{\gamma}(0)=(0,\bar{u},\bar{v}),\bar{\gamma}(1)=(0,\hat{u},\hat{v})\}$, we want to employ the minimax principle for $\bar{E}_{\mu_1,\mu_2,\beta}$ on the minimax class $\bar{\Gamma}$ at the level
\begin{align*}
\bar{c}:=\inf_{\bar{\gamma}\in\bar{\Gamma}}\sup_{t\in[0,1]}\bar{E}_{\mu_1,\mu_2,\beta}(\bar{\gamma}(t)).
\end{align*}

We next show $c=\bar{c}$. On the one hand, thanks to $\Gamma\subset\bar{\Gamma}$, it is easy to find that $c\ge\bar{c}$. On the other hand, for any $\bar{\gamma}\in\bar{\Gamma}$, using the notation
\begin{align*}
\bar{\gamma}(t)=(l(t),\gamma_1(t),\gamma_2(t)),~t\in[0,1].
\end{align*}
By the definition of $\bar{E}$, it can be inferred that
\begin{align*}
\bar{E}_{\mu_1,\mu_2,\beta}(\bar{\gamma}(t))=E_{\mu_1,\mu_2,\beta}(l(t)\star\gamma_1(t),l(t)\star\gamma_2(t)),
\end{align*}
from $\bar{\gamma}\in\bar{\Gamma}$, it follows that $(l(t)\star\gamma_1(t),l(t)\star\gamma_2(t))\in\Gamma$. Hence  $c\le\bar{c}$. In conclusion, $c=\bar{c}$.

Notice that,
\begin{align*}
\bar{E}_{\mu_1,\mu_2,\beta}(\bar{\gamma}(t))=E_{\mu_1,\mu_2,\beta}(l(t)\star\gamma_1(t),l(t)\star\gamma_2(t))=\bar{E}(0,l(t)\star\gamma_1(t),l(t)\star\gamma_2(t)).
\end{align*}
Since  $\bar{E}_{\mu_1,\mu_2,\beta}(s,|u|,|v|)\le\bar{E}_{\mu_1,\mu_2,\beta}(s,u,v)$,  we can choose the minimizing sequence $\bar{\gamma}_{n}=(l_n,\gamma_{1n},\gamma_{2n})$ for $\bar{c}$ satisfying
\begin{align*}
\gamma_{1n},~\gamma_{2n}\ge0 \text{ a.e. in }\mathbb{R}^N.
\end{align*}
Using Theorem 3.2 in \cite{G93}, there exists a Palais-Smale sequence $(l_n,u_n,v_n)\in\mathbb{R}\times S_{a}\times S_{b}$ for $\bar{E}$ at level $\bar{c}$ such that
\begin{itemize}
  \item [(1)]$\lim\limits_{n\rightarrow\infty}\bar{E}_{\mu_1,\mu_2,\beta}(l_n,u_n,v_n)=\bar{c}=c$,
  \item[(2)]$\lim\limits_{n\rightarrow\infty}|l_n|+dist((u_n,v_n),(\gamma_{1n},\gamma_{2n}))=0$,
  \item [(3)]$\lim\limits_{n\rightarrow\infty}\|D_{\mathbb{R}\times S_{a}\times S_{b}}\bar{E}_{\mu_1,\mu_2,\beta}(l_n,u_n,v_n)\|=0$.
\end{itemize}
Thus $E_{\mu_1,\mu_2,\beta}(l_n\star u_n,l_n\star v_n)=c$ and $l_n\rightarrow 0$  from $(1)$ and $u_n,v_n\ge0$ from $\gamma_{1n},\gamma_{2n}\ge0$ and $(2)$. Therefore, $\{\bar{u}_n,\bar{v}_n\}=\{(l_n\star u_n,l_n\star v_n)\}$ is a Palais-Smale sequence for $E_{\mu_1,\mu_2,\beta}$. Similarly, for any $(\varphi_n,\psi_n)\in H_{r}^s\times H_{r}^s$, setting $(\tilde{\varphi}_n,\tilde{\psi}_n)=((-l_n)\star \varphi_n,(-l_n)\star \psi_n)$, it is easily to find that $\int_{\mathbb{R}^N}\bar{u}_n\varphi_n=0$ and $\int_{\mathbb{R}^N}\bar{v}_n\psi_n=0$ is equivalent to $\int_{\mathbb{R}^N}u_n\tilde{\varphi}_n=0$ and $\int_{\mathbb{R}^N}v_n\tilde{\psi}_n=0$. Moreover,
\begin{align*}
DE_{\mu_1,\mu_2,\beta}(\bar{u}_n,\bar{v}_n)[(\varphi_n,\psi_n)]=D\bar{E}_{\mu_1,\mu_2,\beta}(l_n,u_n,v_n)[(0,\tilde{\varphi}_n,\tilde{\psi}_n)]+o(1)\|(\tilde{\varphi}_n,\tilde{\psi}_n)\|_{H^{s}},
\end{align*}
due to $\|(\tilde{\varphi}_n,\tilde{\psi}_n)\|^2_{H^s(\mathbb{R}^N)\times H^s(\mathbb{R}^N)}\le 2\|(\varphi_n,\psi_n)\|^2_{H^s(\mathbb{R}^N)\times H^s(\mathbb{R}^N)}$ for $n$ large, it can be deduced that $\nabla_{S_{a}\times S_{b}}E(\bar{u}_n,\bar{v}_n)\rightarrow 0$. Using $(3)$, it implies that
\begin{align*}
D\bar{E}_{\mu_1,\mu_2,\beta}(l_n,u_n,v_n)[(1,0,0)]\rightarrow0, ~~l_n\rightarrow 0\text{~as~}n\rightarrow\infty.
\end{align*}
We can compute that
\begin{align*}
\left\{\begin{aligned}
&\partial_{l}(\frac{1}{2}A(l\star u,l\star v))=e^{2l}A(u,v),\\
&\partial_{l}\bigg\{\frac{\mu_1}{2p}B(l\star u,l\star u,p)+\frac{\mu_2}{2q}B(l\star v,l\star v,q)+\frac{\beta}{r}B(l\star u,l\star v,r)\bigg\}\\
&=\delta_{p}\mu_1e^{\frac{N(p-1)-\alpha}{2s}l}B(u,u,p)+\delta_{q}\mu_2e^{\frac{N(q-1)-\alpha}{2s}l}B(v,v,q)+2\delta_{r}\beta e^{\frac{N(r-1)-\alpha}{2s}l}B(u,v,r),
\end{aligned}
\right.
\end{align*}
from $(3)$, it follows that
\begin{align*}
&A(\bar{u}_n,\bar{v}_n)-\delta_{p}\mu_1B(\bar{u}_n,\bar{u}_n,p)-\delta_{q}\mu_2B(\bar{v}_n,\bar{v}_n,q)-2\delta_{r}\beta B(\bar{u}_n,\bar{v}_n,r)\rightarrow 0~~\text{as~}n\rightarrow\infty,
\end{align*}
i.e. $(\bar{u}_n,\bar{v}_n)$ satisfies the additional condition. Hence we finish the proof.
\end{proof}

\begin{lemma}\label{lbdd}
Suppose $(\bar{u}_n,\bar{v}_n)$ is obtained in Lemma \ref{lPs}. Then $(\bar{u}_n,\bar{v}_n)$ is bounded in $H^s(\mathbb{R}^N)\times H^s(\mathbb{R}^N)$.
\end{lemma}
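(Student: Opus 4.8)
The plan is to extract boundedness from the two facts supplied by Lemma \ref{lPs}: the energy level $E_{\mu_1,\mu_2,\beta}(\bar u_n,\bar v_n)=c+o_n(1)$ and the Pohozaev-type condition \eqref{add condition}. Since $(\bar u_n,\bar v_n)\in S_a\times S_b$, the $L^2$-norms are fixed, $\|\bar u_n\|_2=a$ and $\|\bar v_n\|_2=b$, so it suffices to bound $A(\bar u_n,\bar v_n)=\|(-\Delta)^{\frac{s}{2}}\bar u_n\|_2^2+\|(-\Delta)^{\frac{s}{2}}\bar v_n\|_2^2$. The mechanism, as in Jeanjean's scheme for $L^2$-supercritical problems, is that a suitable linear combination of the energy and the Pohozaev identity controls the gradient term; here I also need to exploit the nonnegativity of all the Choquard terms.

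Concretely, I would set $\delta=\min\{p\delta_p,q\delta_q,r\delta_r\}$, as introduced just before \eqref{estimate u^p}, and consider
\[
E_{\mu_1,\mu_2,\beta}(\bar u_n,\bar v_n)-\frac{1}{2\delta}\big[A(\bar u_n,\bar v_n)-\mu_1\delta_p B(\bar u_n,\bar u_n,p)-\mu_2\delta_q B(\bar v_n,\bar v_n,q)-2\beta\delta_r B(\bar u_n,\bar v_n,r)\big].
\]
Using \eqref{efct} with $V_1=V_2=0$ (Theorem \ref{t3} is under \textbf{(V1)}), this equals
\[
\big(\tfrac12-\tfrac{1}{2\delta}\big)A(\bar u_n,\bar v_n)+\frac{\mu_1}{2p}\big(\tfrac{p\delta_p}{\delta}-1\big)B(\bar u_n,\bar u_n,p)+\frac{\mu_2}{2q}\big(\tfrac{q\delta_q}{\delta}-1\big)B(\bar v_n,\bar v_n,q)+\frac{\beta}{r}\big(\tfrac{r\delta_r}{\delta}-1\big)B(\bar u_n,\bar v_n,r).
\]
Since $p,q,r>1+\frac{\alpha+2s}{N}$ one has $p\delta_p=\frac{N(p-1)-\alpha}{2s}>1$ and likewise $q\delta_q,r\delta_r>1$, so $\delta>1$ and $\frac12-\frac{1}{2\delta}>0$; and because $\delta$ is the \emph{minimum} of $p\delta_p,q\delta_q,r\delta_r$, the three factors $\frac{p\delta_p}{\delta}-1$, $\frac{q\delta_q}{\delta}-1$, $\frac{r\delta_r}{\delta}-1$ are all $\ge 0$. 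Moreover each term $B(\bar u_n,\bar u_n,p)$, $B(\bar v_n,\bar v_n,q)$, $B(\bar u_n,\bar v_n,r)$ is nonnegative, being the integral of a nonnegative function against the Riesz potential of a nonnegative function. Hence the displayed expression is $\ge\big(\frac12-\frac{1}{2\delta}\big)A(\bar u_n,\bar v_n)$, while by Lemma \ref{lPs} it also equals $c+o_n(1)$. This gives $\big(\frac12-\frac{1}{2\delta}\big)A(\bar u_n,\bar v_n)\le c+o_n(1)$, so $A(\bar u_n,\bar v_n)$ is bounded, and together with the fixed $L^2$-norms this proves that $(\bar u_n,\bar v_n)$ is bounded in $H^s(\mathbb{R}^N)\times H^s(\mathbb{R}^N)$.

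There is no serious obstacle; the only point needing care is choosing the multiplier $\frac{1}{2\delta}$ with $\delta$ the \emph{minimum} (rather than the maximum $\eta$): this is exactly what forces the leftover Choquard coefficients to be nonnegative, so that they can be discarded while a strictly positive coefficient remains in front of $A(\bar u_n,\bar v_n)$. The strict $L^2$-supercriticality of $p,q,r$ is used precisely at the step $\delta>1$; if one of the exponents equalled the $L^2$-critical value $1+\frac{\alpha+2s}{N}$, then $\delta=1$ and this estimate would degenerate.
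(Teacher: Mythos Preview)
Your argument is correct and rests on the same two ingredients as the paper's proof: the energy level $E_{\mu_1,\mu_2,\beta}(\bar u_n,\bar v_n)=c+o_n(1)$ and the Pohozaev-type relation \eqref{add condition}, together with $p\delta_p,q\delta_q,r\delta_r>1$. The organization differs slightly: the paper first substitutes $A(\bar u_n,\bar v_n)$ from \eqref{add condition} into $E_{\mu_1,\mu_2,\beta}$ to write the energy as a strictly positive combination of the three Choquard terms, infers that $B(\bar u_n,\bar u_n,p)+B(\bar v_n,\bar v_n,q)+B(\bar u_n,\bar v_n,r)$ is bounded, and then reads off the boundedness of $A$ from the energy; you instead take the single linear combination $E_{\mu_1,\mu_2,\beta}-\tfrac{1}{2\delta}\cdot(\text{Pohozaev})$ with $\delta=\min\{p\delta_p,q\delta_q,r\delta_r\}$, which leaves a strictly positive coefficient in front of $A$ and nonnegative leftover Choquard terms, giving the bound on $A$ in one stroke. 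Both routes are standard; yours is a bit more direct, while the paper's version yields as a by-product the two-sided estimate \eqref{estimateB} on the Choquard terms, which is used later in Lemma \ref{llcp}.
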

\begin{proof}
By the additional condition, it yields that
\begin{align*}
E_{\mu_1,\mu_2,\beta}(\bar{u}_n,\bar{v}_n)
=&\frac{1}{2}A(\bar{u}_n,\bar{v}_n)-\frac{\mu_1}{2p}B(\bar{u}_n,\bar{u}_n,p)-\frac{\mu_2}{2q}B(\bar{v}_n,\bar{v}_n,q)-\frac{\beta}{r}B(\bar{u}_n,\bar{v}_n,r)\\
=&\frac{\mu_1}{2}\frac{N(p-1)-\alpha-2s}{2ps}B(\bar{u}_n,\bar{u}_n,p)+\frac{\mu_2}{2}\frac{N(q-1)-\alpha-2s}{2qs}B(\bar{v}_n,\bar{v}_n,q)\\
&+\beta\frac{N(r-1)-\alpha-2s}{2ps}B(\bar{u}_n,\bar{v}_n,r)\text{~as~} n\rightarrow\infty.
\end{align*}
Since $1+\frac{\alpha+2s}{N}<p,q,r<\frac{N+\alpha}{N-2s}$, the coefficients of each terms in above equality are positive. By the fact $E_{\mu_1,\mu_2,\beta}(\bar{u}_n,\bar{v}_n)\rightarrow c$ as $n\rightarrow\infty$, it follows that  there exist $C>0$  and $\bar{C}$ such that
\begin{align}\label{estimateB}
 \bar{C}\le B(\bar{u}_n,\bar{u}_n,p)+B(\bar{u}_n,\bar{v}_n,r)+B(\bar{v}_n,\bar{v}_n,q)\le C.
\end{align}
Applying the fact $E_{\mu_1,\mu_2,\beta}(\bar{u}_n,\bar{v}_n)\rightarrow c$ as $n\rightarrow\infty$ again, we can obtain that $A(\bar{u}_n,\bar{v}_n)\le C$.
Hence the desired results is obtained.
\end{proof}

Since $E_{\mu_1,\mu_2,\beta}'|_{S_{a}\times S_{b}}(\bar{u}_n,\bar{v}_n)\rightarrow 0$, there exist two sequence of real numbers $\{\lambda_{1n}\}$ and $\{\lambda_{2n}\}$ such that
\begin{align*}
E_{\mu_1,\mu_2,\beta}'(\bar{u}_n,\bar{v}_n)(\varphi,\psi)=o_n(1),~~\forall(\varphi,\psi)\in H^s(\mathbb{R}^N)\times H^s(\mathbb{R}^N),
\end{align*}
i.e,
\begin{align}\label{dvt e}
\begin{split}
&\int_{\mathbb{R}^N}(-\Delta)^{\frac{s}{2}} \bar{u}_n(-\Delta)^{\frac{s}{2}} \varphi dx
-\mu_1\int_{\mathbb{R}^N}(I_{\alpha}\star|\bar{u}_n|^p)|\bar{u}_n|^{p-2}\bar{u}_ndx\\
&-\int_{\mathbb{R}^N}\beta(x)((I_{\alpha}\star|\bar{v}_n|^r)|\bar{u}_n|^{r-2}\bar{u}_n\varphi dx+\lambda_{1n}\int_{\mathbb{R}^N}\bar{u}_n\varphi dx\\
&+\int_{\mathbb{R}^N}(-\Delta)^{\frac{s}{2}} \bar{v}_n(-\Delta)^{\frac{s}{2}} \psi dx-\mu_2\int_{\mathbb{R}^N}(I_{\alpha}\star|\bar{v}_n|^p)|\bar{v}_n|^{p-2}\bar{v}_n\psi dx\\
&-\int_{\mathbb{R}^N}\beta(x)(I_{\alpha}\star|\bar{u}_n|^r)|\bar{v}_n|^{r-2}\bar{v}_n\psi dx
+\lambda_{2n}\int_{\mathbb{R}^N}\bar{v}_n\psi dx
=o_{n}(1),
\end{split}
\end{align}
for every $(\varphi,\psi)\in H^s(\mathbb{R}^N)\times H^s(\mathbb{R}^N)$ with $o_{n}(1)\rightarrow 0$ as $n\rightarrow\infty$.

\begin{lemma}\label{llcp}
Both $\{\lambda_{1n}\}$ and $\{\lambda_{2n}\}$ are bounded sequences. In addition, up to a subsequence, at least one of the sequences converges to a strict positive value.
\end{lemma}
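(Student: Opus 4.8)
The plan is to adapt the argument of Lemma~\ref{llb} to obtain boundedness, and then to combine the identity produced by testing \eqref{dvt e} with $(\bar u_n,\bar v_n)$ against the Pohozaev-type relation \eqref{add condition} in order to pin down the sign of the limits. For the boundedness, I would test \eqref{dvt e} with $(\bar u_n,0)$ and with $(0,\bar v_n)$, which gives
\begin{align*}
\lambda_{1n}a^2&=-\|(-\Delta)^{\frac{s}{2}}\bar u_n\|_2^2+\mu_1B(\bar u_n,\bar u_n,p)+\beta B(\bar u_n,\bar v_n,r)+o_n(1),\\
\lambda_{2n}b^2&=-\|(-\Delta)^{\frac{s}{2}}\bar v_n\|_2^2+\mu_2B(\bar v_n,\bar v_n,q)+\beta B(\bar u_n,\bar v_n,r)+o_n(1).
\end{align*}
By Lemma~\ref{lbdd}, $(\bar u_n,\bar v_n)$ is bounded in $H^s(\mathbb{R}^N)\times H^s(\mathbb{R}^N)$, so by the fractional Gagliardo--Nirenberg and Hardy--Littlewood--Sobolev inequalities each term on the right-hand sides above is bounded; hence $\{\lambda_{1n}\}$ and $\{\lambda_{2n}\}$ are bounded, and along a subsequence $\lambda_{1n}\to\lambda_1$, $\lambda_{2n}\to\lambda_2$.

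For the positivity, I would test \eqref{dvt e} with $(\bar u_n,\bar v_n)$ to get
\begin{align*}
A(\bar u_n,\bar v_n)+\lambda_{1n}a^2+\lambda_{2n}b^2=\mu_1B(\bar u_n,\bar u_n,p)+\mu_2B(\bar v_n,\bar v_n,q)+2\beta B(\bar u_n,\bar v_n,r)+o_n(1),
\end{align*}
and subtract \eqref{add condition}; the kinetic terms cancel up to $o_n(1)$ and we are left with
\begin{align*}
\lambda_{1n}a^2+\lambda_{2n}b^2=\mu_1(1-\delta_p)B(\bar u_n,\bar u_n,p)+\mu_2(1-\delta_q)B(\bar v_n,\bar v_n,q)+2\beta(1-\delta_r)B(\bar u_n,\bar v_n,r)+o_n(1).
\end{align*}
Since $1+\frac{\alpha+2s}{N}<p,q,r<\frac{N+\alpha}{N-2s}$, a direct computation gives $\delta_p,\delta_q,\delta_r\in(0,1)$, so the three coefficients are strictly positive; setting $c_0=\min\{\mu_1(1-\delta_p),\mu_2(1-\delta_q),2\beta(1-\delta_r)\}>0$ and invoking the lower bound $\bar C>0$ from \eqref{estimateB},
\begin{align*}
\lambda_{1n}a^2+\lambda_{2n}b^2\ge c_0\big(B(\bar u_n,\bar u_n,p)+B(\bar v_n,\bar v_n,q)+B(\bar u_n,\bar v_n,r)\big)+o_n(1)\ge c_0\bar C+o_n(1).
\end{align*}
Letting $n\to\infty$ yields $\lambda_1a^2+\lambda_2b^2\ge c_0\bar C>0$, so $\lambda_1$ and $\lambda_2$ cannot both be non-positive, and at least one of them is strictly positive, which is the assertion.

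The genuinely delicate point lies not in this lemma but in what follows it: the computation above only forces the weighted sum $\lambda_1a^2+\lambda_2b^2$ to be positive, and therefore cannot by itself decide which of the two multipliers is positive, let alone that both are. Resolving the sign of the remaining multiplier, which is what is ultimately needed for the strong $L^2$-convergence of $(\bar u_n,\bar v_n)$, will require applying the Liouville-type Lemma~\ref{llt} to the weak limit together with an energy-comparison argument, as announced in the Introduction; that step, rather than the present lemma, carries the real weight of the $L^2$-supercritical case.
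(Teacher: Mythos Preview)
Your proof is correct and follows essentially the same route as the paper's: test \eqref{dvt e} with $(\bar u_n,0)$ and $(0,\bar v_n)$ to get boundedness from Lemma~\ref{lbdd}, then combine the resulting identities with the Pohozaev-type relation \eqref{add condition} to obtain $\lambda_{1n}a^2+\lambda_{2n}b^2=\mu_1(1-\delta_p)B+\mu_2(1-\delta_q)B+2\beta(1-\delta_r)B+o_n(1)$, and conclude via $\delta_p,\delta_q,\delta_r<1$ and the lower bound in \eqref{estimateB}. Your closing remark about the real difficulty lying in the subsequent sign determination via Lemma~\ref{llt} and energy comparison is also accurate.
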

\begin{proof}
Taking $(\varphi,\psi)$ to be $(\bar{u}_n,0)$ and $(0,\bar{v}_n)$ respectively in \eqref{dvt e}, it follows that
\begin{align}\label{l1}
-\lambda_{1n}a^2+o_{n}(1)
=\|(-\Delta)^{\frac{s}{2}} \bar{u}_n\|_2^2-\mu_1B(\bar{u}_n,\bar{u}_n,p)-\beta B(\bar{u}_n,\bar{v}_n,r),
\end{align}
and
\begin{align}\label{l2}
-\lambda_{2n}b^2+o_{n}(1)
=\|(-\Delta)^{\frac{s}{2}} \bar{v}_n\|_2^2-\mu_2B(\bar{v}_n,\bar{v}_n,q)-\beta B(\bar{u}_n,\bar{v}_n,r).
\end{align}
Thanks to the boundedness of  $(\bar{u}_n,\bar{v}_n)$ in $H^s_{r}\times H^s_{r}$, we can deduce that $\{\lambda_{1n}\}$ and $\{\lambda_{2n}\}$ are bounded. Moreover, by \eqref{add condition}, \eqref{l1} and \eqref{l2}, there holds
\begin{align*}
&\lambda_{1n}a^2+\lambda_{2n}b^2+o_{n}(1)\\
=&-A(\bar{u}_n,\bar{v}_n)+\mu_1B(\bar{u}_n,\bar{u}_n,p)+\mu_2B(\bar{v}_n,\bar{v}_n,q)+2\beta B(\bar{u}_n,\bar{v}_n,r)\\
=&\mu_1(1-\delta_{p})B(\bar{u}_n,\bar{u}_n,p)+\mu_2(1-\delta_{q})B(\bar{v}_n,\bar{v}_n,q)+2\beta (1-\delta_{r})B(\bar{u}_n,\bar{v}_n,r).
\end{align*}
Since $\delta_{p},\delta_{q},\delta_{r}<1$, by \eqref{estimateB}, it can be inferred that there exists $\tilde{C}>0$ such that
\begin{align*}
\lambda_{1n}a^2+\lambda_{2n}b^2\ge\tilde{C}>0,
\end{align*}
for $n$ large.  Hence the lemma holds.
\end{proof}
From the above lemma, there exist $\lambda_1\in\mathbb{R}$ and $\lambda_2\in\mathbb{R}$ such that  $\lambda_{1n}\rightarrow\lambda_1$ and $\lambda_{2n}\rightarrow\lambda_2$ as $n\rightarrow\infty$ up to a subsequence. The signs of $\lambda_1$ and $\lambda_2$ play an important role in the strong convergence in $H^{s}(\mathbb{R}^N)$.
\begin{lemma}\label{llp}
If $\lambda_1>0$ (resp. $\lambda_2>0$), then $\bar{u}_n\rightarrow\bar{u}$ (resp. $\bar{v}_n\rightarrow\bar{v}$) strongly in $H^{s}(\mathbb{R}^N)$.
\end{lemma}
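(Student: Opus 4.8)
The plan is to run the argument of Lemma~\ref{lsl} inside the radial space $H_{r}^{s}(\mathbb{R}^N)\times H_{r}^{s}(\mathbb{R}^N)$. By Lemma~\ref{lbdd} the sequence $(\bar{u}_n,\bar{v}_n)$ is bounded there, so, up to a subsequence, $(\bar{u}_n,\bar{v}_n)\rightharpoonup(\bar{u},\bar{v})$ weakly in $H_{r}^{s}(\mathbb{R}^N)\times H_{r}^{s}(\mathbb{R}^N)$, $(\bar{u}_n,\bar{v}_n)\to(\bar{u},\bar{v})$ strongly in $L^{t}(\mathbb{R}^N)\times L^{t}(\mathbb{R}^N)$ for all $t\in(2,2_{s}^{*})$, and a.e.\ in $\mathbb{R}^N$. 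First I would pass to the limit in \eqref{dvt e}: by Lemma~\ref{lbw} and the Remark following it the four nonlocal terms converge in $H^{-s}(\mathbb{R}^N)$ to the corresponding quantities built from $(\bar{u},\bar{v})$, the quadratic and mass terms pass to the limit by weak convergence, and $\lambda_{in}\to\lambda_i$ by Lemma~\ref{llcp}. Hence $(\bar{u},\bar{v},\lambda_1,\lambda_2)$ is a weak solution of \eqref{system}; in particular, testing only the first equation,
\begin{align*}
\int_{\mathbb{R}^N}(-\Delta)^{\frac{s}{2}}\bar{u}(-\Delta)^{\frac{s}{2}}\varphi\,dx+\lambda_1\int_{\mathbb{R}^N}\bar{u}\varphi\,dx&=\mu_1\int_{\mathbb{R}^N}(I_{\alpha}\star|\bar{u}|^p)|\bar{u}|^{p-2}\bar{u}\varphi\,dx\\
&\quad+\beta\int_{\mathbb{R}^N}(I_{\alpha}\star|\bar{v}|^r)|\bar{u}|^{r-2}\bar{u}\varphi\,dx
\end{align*}
for every $\varphi\in H_{r}^{s}(\mathbb{R}^N)$.

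Next I would evaluate $\bigl(DE_{\mu_1,\mu_2,\beta}(\bar{u}_n,\bar{v}_n)-DE_{\mu_1,\mu_2,\beta}(\bar{u},\bar{v})\bigr)[(\bar{u}_n-\bar{u},0)]$ and add $\lambda_1\int_{\mathbb{R}^N}|\bar{u}_n-\bar{u}|^2\,dx$. Since $DE_{\mu_1,\mu_2,\beta}|_{S_a\times S_b}(\bar{u}_n,\bar{v}_n)\to0$ with multipliers $\lambda_{1n}$, the identity \eqref{dvt e} tested against $(\bar{u}_n-\bar{u},0)$ is $o_n(1)$ (the test function being bounded in $H^s$ by Lemma~\ref{lbdd}); using also the limiting equation above, $\lambda_{1n}\to\lambda_1$, and $\bar{u}_n-\bar{u}\rightharpoonup0$, this yields
\begin{align*}
o_n(1)=\|(-\Delta)^{\frac{s}{2}}(\bar{u}_n-\bar{u})\|_2^2+\lambda_1\|\bar{u}_n-\bar{u}\|_2^2-\mu_1\,\mathrm{I}_n-\beta\,\mathrm{II}_n,
\end{align*}
where $\mathrm{I}_n=\langle(I_{\alpha}\star|\bar{u}_n|^p)|\bar{u}_n|^{p-2}\bar{u}_n-(I_{\alpha}\star|\bar{u}|^p)|\bar{u}|^{p-2}\bar{u},\,\bar{u}_n-\bar{u}\rangle$ and $\mathrm{II}_n$ is the analogous pairing for the coupling term. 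Writing $f_n:=(I_{\alpha}\star|\bar{u}_n|^p)|\bar{u}_n|^{p-2}\bar{u}_n$, which by Lemma~\ref{lbw} converges strongly in $H^{-s}(\mathbb{R}^N)$ to $f:=(I_{\alpha}\star|\bar{u}|^p)|\bar{u}|^{p-2}\bar{u}$, and splitting $\langle f_n,\bar{u}_n-\bar{u}\rangle=\langle f_n-f,\bar{u}_n-\bar{u}\rangle+\langle f,\bar{u}_n-\bar{u}\rangle$, the first pairing is bounded by $\|f_n-f\|_{H^{-s}}\|\bar{u}_n-\bar{u}\|_{H^s}\to0$ and the second vanishes by weak convergence, so $\mathrm{I}_n\to0$; likewise $\mathrm{II}_n\to0$ by the coupled version of Lemma~\ref{lbw} in its Remark. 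Therefore $\|(-\Delta)^{\frac{s}{2}}(\bar{u}_n-\bar{u})\|_2^2+\lambda_1\|\bar{u}_n-\bar{u}\|_2^2=o_n(1)$, and since $\lambda_1>0$ both nonnegative summands tend to $0$, i.e.\ $\bar{u}_n\to\bar{u}$ strongly in $H^s(\mathbb{R}^N)$. The case $\lambda_2>0$ is identical, testing with $(0,\bar{v}_n-\bar{v})$.

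The main obstacle is the handling of the four nonlocal terms, in particular the coupling term $\beta(I_{\alpha}\star|\bar{v}_n|^r)|\bar{u}_n|^{r-2}\bar{u}_n$: one has to check that Lemma~\ref{lbw} and its Remark apply throughout the admissible range $p,q,r\in(1+\frac{\alpha+2s}{N},\frac{N+\alpha}{N-2s})$, that the test functions $\bar{u}_n-\bar{u}$ (resp.\ $\bar{v}_n-\bar{v}$) are admissible and bounded in $H_{r}^{s}(\mathbb{R}^N)$, and that the $H^{-s}$–$H^s$ duality indeed annihilates the cross terms along the weakly convergent sequence. Once this is in place, the positivity of $\lambda_1$ (resp.\ $\lambda_2$) --- one of which is guaranteed by Lemma~\ref{llcp} --- converts the identity above into strong $H^s$-convergence of the corresponding component.
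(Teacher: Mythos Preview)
Your proof is correct and follows essentially the same route as the paper: both use the boundedness from Lemma~\ref{lbdd}, the radial compactness to kill the nonlocal terms, and the positivity of $\lambda_1$ to upgrade to strong $H^s$-convergence. The only cosmetic difference is that the paper tests \eqref{dvt e} with $(\bar{u}_n,0)$ and concludes via norm convergence $\|(-\Delta)^{s/2}\bar{u}_n\|_2^2+\lambda_{1n}\|\bar{u}_n\|_2^2\to\|(-\Delta)^{s/2}\bar{u}\|_2^2+\lambda_1\|\bar{u}\|_2^2$, whereas you test with $(\bar{u}_n-\bar{u},0)$ exactly as in Lemma~\ref{lsl}.
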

\begin{proof}
By Lemma \ref{lbdd}, $(\bar{u}_n,\bar{v}_n)$ is bounded in $H^{s}_{r}(\mathbb{R}^N)\times H^{s}_{r}(\mathbb{R}^N)$. Since $H^{s}_{r}(\mathbb{R}^N)\hookrightarrow L^{p}(\mathbb{R}^N)$ for $p\in(2,2^{*}_{s})$  is compact, by \eqref{gGN}, we have
\begin{align}\label{chlp}
\left\{\begin{aligned}
&(\bar{u}_n,\bar{v}_n)\rightharpoonup(\bar{u},\bar{v})\text{~weakly in }H^s_{r}(\mathbb{R}^N)\times H^s_{r}(\mathbb{R}^N),\\
&B(\bar{u}_n,\bar{u}_n,p)\to B(\bar{u},\bar{u},p),B(\bar{u}_n,\bar{v}_n,r)\to B(\bar{u},\bar{v},r),B(\bar{v}_n,\bar{v}_n,q)\to B(\bar{v},\bar{v},q)
\\
&(\bar{u}_n,\bar{v}_n)\to(\bar{u},\bar{v})\text{~a.e. in }\mathbb{R}^N.
\end{aligned}
\right.
\end{align}
From \eqref{dvt e}, it yields that
\begin{align*}
\langle E_{\mu_1,\mu_2,\beta}'(\bar{u}_n,\bar{v}_n)-\lambda_{1n}(\bar{u}_n,0),(\bar{u}_n,0)\rangle\rightarrow\langle E_{\mu_1,\mu_2,\beta}'(\bar{u},\bar{v})-\lambda_{1}(\bar{u},0),(\bar{u},0)\rangle~~~\text{as }n\rightarrow\infty.
\end{align*}
Together with \eqref{chlp}, we can obtain that
\begin{align*}
\|(-\Delta)^{\frac{s}{2}} \bar{u}_n\|_2^2+\lambda_{1n}\|\bar{u}_n\|_2^2\rightarrow\|(-\Delta)^{\frac{s}{2}} \bar{u}\|_2^2+\lambda_{1}\|\bar{u}\|_2^2.
\end{align*}
From $\lambda_1>0$, it means that  $\bar{u}_n\rightarrow\bar{u}$ strongly in $H^{s}(\mathbb{R}^N)$. For $\lambda_2>0$, the proof is similar. Hence we omit the details. Therefore, we complete the proof.
\end{proof}

\begin{lemma}\label{leub}
There exists $\beta_1>0$ such that if $\beta>\beta_1$, then
\begin{align}\label{ec}
0<c<min\{n(a,\mu_1,p),n(b,\mu_2,q)\}.
\end{align}
\end{lemma}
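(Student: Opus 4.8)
Since Lemma \ref{lPs} already gives $c\ge\inf_{\Pi}E>0$, the plan reduces to producing the strict upper bound $c<\min\{n(a,\mu_1,p),n(b,\mu_2,q)\}$, and for that I would exhibit a competitor path in $\Gamma$ along which the energy stays small once the coupling is strong. Fix once and for all a pair $(u_0,v_0)\in S_a\times S_b$ with $u_0,v_0>0$, for instance $u_0=\frac{a}{\|Q\|_2}Q$ and $v_0=\frac{b}{\|Q\|_2}Q$ with $Q$ as in \eqref{Qequation}, so that $A(u_0,v_0)>0$ and $B(u_0,v_0,r)>0$. A direct computation (cf. the proof of Lemma \ref{lPs}) gives
$$
E_{\mu_1,\mu_2,\beta}(l\star(u_0,v_0))=\frac{e^{2l}}{2}A(u_0,v_0)-\frac{\mu_1}{2p}e^{2p\delta_p l}B(u_0,u_0,p)-\frac{\mu_2}{2q}e^{2q\delta_q l}B(v_0,v_0,q)-\frac{\beta}{r}e^{2r\delta_r l}B(u_0,v_0,r),
$$
in which the three nonlocal exponents $2p\delta_p,2q\delta_q,2r\delta_r$ all exceed $2$ because $p,q,r>1+\frac{\alpha+2s}{N}$. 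Dropping the two nonpositive self-interaction terms, for every $l\in\mathbb{R}$,
$$
E_{\mu_1,\mu_2,\beta}(l\star(u_0,v_0))\le h_\beta(l):=\frac{e^{2l}}{2}A(u_0,v_0)-\frac{\beta}{r}e^{2r\delta_r l}B(u_0,v_0,r).
$$

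I would then promote this scaling orbit to an element of $\Gamma$. Since $A(l\star(u_0,v_0))=e^{2l}A(u_0,v_0)\to0$ as $l\to-\infty$, and by Lemma \ref{sinfty} $A(l\star(u_0,v_0))\to\infty$ and $E_{\mu_1,\mu_2,\beta}(l\star(u_0,v_0))\to-\infty$ as $l\to\infty$, one can pick $l_1\ll0\ll l_2$ with $l_1\star(u_0,v_0)\in\Omega$ and $l_2\star(u_0,v_0)\in\Delta$, and it is legitimate to take these as the endpoints $(\bar u,\bar v)$ and $(\hat u,\hat v)$ in the definition of $\Gamma$ (the construction before Lemma \ref{lPs} allows any admissible pair, and Lemma \ref{lPs} and the subsequent lemmas are unaffected by this choice). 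The segment $t\mapsto\big((1-t)l_1+tl_2\big)\star(u_0,v_0)$ then lies in $\Gamma$, so
$$
c\le\max_{l\in[l_1,l_2]}E_{\mu_1,\mu_2,\beta}(l\star(u_0,v_0))\le\sup_{l\in\mathbb{R}}h_\beta(l).
$$
The remaining supremum is handled by a one-variable estimate: as $r\delta_r>1$, we have $h_\beta(l)\le0$ for $l\ge l_*(\beta):=\frac{1}{2(r\delta_r-1)}\ln\frac{r\,A(u_0,v_0)}{2\beta B(u_0,v_0,r)}$, and $h_\beta(l)\le\frac{e^{2l}}{2}A(u_0,v_0)\le\frac{e^{2l_*(\beta)}}{2}A(u_0,v_0)$ for $l<l_*(\beta)$, hence
$$
0\le\sup_{l\in\mathbb{R}}h_\beta(l)\le\frac{A(u_0,v_0)}{2}\Big(\frac{r\,A(u_0,v_0)}{2\beta B(u_0,v_0,r)}\Big)^{\frac{1}{r\delta_r-1}}\longrightarrow 0\qquad\text{as }\beta\to\infty.
$$

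Finally I would record that $n(a,\mu_1,p)$ and $n(b,\mu_2,q)$ are fixed positive numbers independent of $\beta$: on $\mathcal{N}_c$ the constraint $Q_\mu(u)=0$ forces $M^p_\mu(u)=\frac12\big(1-\frac{1}{p\delta_p}\big)\|(-\Delta)^{\frac{s}{2}}u\|_2^2$ with $p\delta_p>1$, while substituting $Q_\mu(u)=0$ into \eqref{gGN} and using $2p\delta_p>2$ bounds $\|(-\Delta)^{\frac{s}{2}}u\|_2^2$ from below uniformly on $\mathcal{N}_c$; thus $n(a,\mu_1,p)>0$, and likewise $n(b,\mu_2,q)>0$. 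Since the bound on $\sup_l h_\beta$ tends to $0$ as $\beta\to\infty$, there is $\beta_1>0$ with $\sup_l h_\beta(l)<\min\{n(a,\mu_1,p),n(b,\mu_2,q)\}$ for all $\beta>\beta_1$, and combining with $c\le\sup_l h_\beta(l)$ and $c>0$ yields \eqref{ec}. The part I expect to need the most care is keeping every constant in the last display genuinely independent of $\beta$ and verifying the strict positivity of the scalar mountain-pass levels $n(a,\mu_1,p)$, $n(b,\mu_2,q)$; the path construction and the scalar optimization are otherwise routine.
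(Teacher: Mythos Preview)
Your argument is correct and takes a somewhat different, more elementary route than the paper. The paper chooses for $(u_0,v_0)$ the \emph{ground states} of the two scalar problems \eqref{scalar problem} (with parameters $(\mu_1,p,a)$ and $(\mu_2,q,b)$), so that the identity $\sup_{l\in\mathbb{R}}M_{\mu_1}^{p}(l\star u_0)=n(a,\mu_1,p)$ (and similarly for $v_0$) holds exactly; it then bounds $E_{\mu_1,\mu_2,\beta}(l\star(u_0,v_0))$ by $M_{\mu_1}^{p}(l\star u_0)+M_{\mu_2}^{q}(l\star v_0)-\frac{\beta}{r}B(l\star u_0,l\star v_0,r)$, splits the range of $l$ at a fixed $\bar l$, and uses the uniform lower bound on $B(l\star u_0,l\star v_0,r)$ for $l\ge\bar l$ to push the energy below $n(a,\mu_1,p)+n(b,\mu_2,q)$ minus a term proportional to $\beta$. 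You instead take an arbitrary positive pair, discard \emph{both} self-interaction terms, and show directly that the resulting one-variable function $h_\beta$ has $\sup_l h_\beta(l)\to 0$ as $\beta\to\infty$, which is enough once you know $n(a,\mu_1,p),n(b,\mu_2,q)>0$. Your approach avoids invoking the existence of the scalar ground states and the identity $\sup_l M^p_\mu(l\star u_0)=n(a,\mu_1,p)$; the paper's approach would in principle give a smaller (hence better) threshold $\beta_1$, but that refinement is not needed for the lemma as stated. Your verification that $n(a,\mu_1,p),n(b,\mu_2,q)>0$ and that the endpoints $l_1,l_2$ can be chosen independently of $\beta$ (so that $c$ is defined via a $\beta$-independent class $\Gamma$) fills in points the paper leaves implicit.
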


\begin{proof}
It is obvious to check that $c>0$. We only show the right side of \eqref{ec} holds. Denote that $u_0$ is the ground state solution of \eqref{scalar problem} with $\mu=\mu_1,p=p,c=a$ and $v_0$ is the ground state solution of \eqref{scalar problem} with $\mu=\mu_2,p=q,c=b$.
By Lemma \ref{sinfty}, it follows that there exists sufficient small $l_0<0$ and  large enough $l_1>0$ such that
\begin{align*}
l_0\star(u_0,v_0)\in\Omega\text{~and~}l_1\star(u_0,v_0)\in\Delta.
\end{align*}
Taking the path $\gamma_0(t)=[(1-t)l_0+tl_1]\star(u_0,v_0)(t\in[0,1])$, it is easy to check that $\gamma_0\in\Gamma$. By the definition of $c$,  we have
\begin{align*}
c\le \max_{t\in[0,1]}E_{\mu_1,\mu_2,\beta}(\gamma_0(t))\le\sup_{l\in\mathbb{R}}E_{\mu_1,\mu_2,\beta}(l\star(u,v)).
\end{align*}

Now we show that $\sup\limits_{l\in\mathbb{R}}E_{\mu_1,\mu_2,\beta}(l\star(u,v))<min\{n(a,\mu_1,p),n(b,\mu_2,q)\}$.
Since $\beta>0$, we can compute that
\begin{align}\label{ee}
\begin{split}
&E_{\mu_1,\mu_2,\beta}(l\star(u_0,v_0))\\
=&\frac12A(l\star u_0,l\star v_0)-\frac{\mu_1}{2p}B(l\star u_0,l\star u_0,p)\\
&-\frac{\mu_2}{2q}B(l\star v_0,l\star v_0,q)-\frac{\beta}{r}B(l\star u_0,l\star v_0,r)\\
\le&\frac12A(l\star u_0,l\star v_0)-\frac{\mu_1}{2p}B(l\star u_0,l\star u_0,p)-\frac{\mu_2}{2q}B(l\star v_0,l\star v_0,q)
\end{split}
\end{align}
In addition, we notice that
\begin{align*}
M_{\mu_1}^{p}(l\star u_0)\rightarrow0,~M_{\mu_2}^{q}(l\star v_0)\rightarrow0,\text{~as~}l\rightarrow-\infty,
\end{align*}
combining with \eqref{ee}, there exists sufficient small $\bar{l}<0$ such that
\begin{align*}
\sup_{l\le \bar{l}}E_{\mu_1,\mu_2,\beta}(l\star(u_0,v_0))
\le&\sup_{l\le \bar{l}}\{M_{\mu_1}^{p}(l\star u_0)+M_{\mu_2}^{q}(l\star u_0)\}\\
<&\min\{n(a,\mu_1,p),n(b,\mu_2,q)\}.
\end{align*}
On the other hand, for $l\ge\bar{l}$, we have
\begin{align*}
B(l\star u_0,l\star v_0,r)=e^{l\frac{N(p-1)-\alpha}{s}}B(u_0,v_0,r)\ge Ce^{\bar{l}\frac{N(p-1)-\alpha}{s}},
\end{align*}
together with \eqref{ee}, we can derive that
\begin{align*}
\sup_{l\ge \bar{l}}E_{\mu_1,\mu_2,\beta}(l\star(u_0,v_0))
%=&\sup_{l\ge \bar{l}}\bigg\{\frac12A(l\star u_0,l\star v_0)-\frac{\mu_1}{2p}B(l\star u_0,l\star u_0,p)-\frac{\mu_2}{2q}B(l\star v_0,l\star v_0,q)-\frac{\beta}{r}B(l\star u_0,l\star v_0,r)\bigg\}\\
\le&\sup_{l\ge \bar{l}}\{M_{\mu_1}^{p}(l\star u_0)+M_{\mu_2}^{q}(l\star u_0)\}-Ce^{\bar{l}\frac{N(p-1)-\alpha}{s}}\\
\le&n(a,\mu_1,p)+n(b,\mu_2,q)-Ce^{\bar{l}\frac{N(p-1)-\alpha}{s}}\\
<&\min\{n(a,\mu_1,p),n(b,\mu_2,q)\},
\end{align*}
provided $\beta$ sufficient large. As a consequence, there exists $\beta_1>0$ sufficient large such that $c<min\{n(a,\mu_1,p),n(b,\mu_2,q)\}$ for $\beta>\beta_1$. Therefore, we finish the proof.
\end{proof}
\begin{proof}[\textbf{Proof of Theorem \ref{t2}}]
From Lemma \ref{llcp}, without loss of generality, we may suppose $\lambda_1>0$. Thus $\bar{u}_n\rightarrow\bar{u}$ strongly in $H^s_r(\mathbb{R}^N)$ by Lemma \ref{llp}. We only remain to show that $\bar{v}_n\rightarrow\bar{v}$ strongly in $H^s_r(\mathbb{R}^N)$. We argue it by contradiction and assume that $\lambda_2\le0$. By the weak limit and \eqref{dvt e}, $\bar{v}$ satisfies
\begin{align*}
(-\Delta)^{s}\bar{v}=-\lambda_2 \bar{v}+\mu_2(I_{\alpha}\star |v|^p)|\bar{v}|^{q-2}\bar{v}+\beta(I_{\alpha}\star |u|^p)|\bar{v}|^{q-2}\bar{v} \ge0.
\end{align*}
By Lemma \ref{llt}, we have $\bar{v}\equiv0$. Hence $E_{\mu_1,\mu_2,\beta}(\bar{u},\bar{v})=M_{\mu_1}^{p}(\bar u)=n(a,\mu_1,p)$, which contradicts to Lemma \ref{leub}. Therefore, $\lambda_2>0$, which implies $\bar{v}_n\rightarrow\bar{v}$ strongly in $H^s_{r}(\mathbb{R}^N)$.
\end{proof}

\end{document}